\newtheorem{thm}{Theorem}[section]
\newtheorem{cor}[thm]{Corollary}
\newtheorem{lem}[thm]{Lemma}
\theoremstyle{definition}
\newtheorem{defin}[thm]{Definition}
\numberwithin{equation}{section}
\def\F{\mathbb{F}}
\def\L{\mathcal{L}}
\def\E{\mathcal{E}}
\def\G{\Gamma}
\def\R{\mathcal{R}}
\def\N{\mathcal{N}}
\def\B{\mathcal{B}}
\def\pbd{\text{PBD}}
\def\eref#1{$(\ref{#1})$}
\def\sref#1{\S$\ref{#1}$}
\def\lref#1{Lemma~$\ref{#1}$}
\def\tref#1{Theorem~$\ref{#1}$}
\def\cyref#1{Corollary~$\ref{#1}$}
\def\dref#1{Definition~$\ref{#1}$}
\renewcommand{\geq}{\geqslant}
\renewcommand{\leq}{\leqslant}
\renewcommand{\emptyset}{\varnothing}
\g@addto@macro\bfseries{\boldmath}
\title{Cycles of quadratic Latin squares and anti-perfect $1$-factorisations}
\author{Jack Allsop\\
	\small School of Mathematics\\[-0.5ex]
	\small Monash University\\[-0.5ex]
	\small Vic 3800, Australia\\
	\small\tt jack.allsop@monash.edu}
\date{}
\begin{document}
	\maketitle
	
	\begin{abstract}
		A Latin square of order $n$ is an $n \times n$ matrix of $n$ symbols, such that each symbol occurs exactly once in each row and column. For an odd prime power $q$ let $\F_q$ denote the finite field of order $q$. A quadratic Latin square is a Latin square $\mathcal{L}[a, b]$ defined by
		\[
		(\mathcal{L}[a, b])_{i, j} = \begin{cases}
			i + a(j-i) & \text{if } j-i \text{ is a quadratic residue in } \F_q, \\
			i + b(j-i) & \text{otherwise},
		\end{cases}
		\]
		for some $\{a, b\} \subseteq \F_q$ such that $ab$ and $(a-1)(b-1)$ are quadratic residues in $\F_q$. Quadratic Latin squares have previously been used to construct perfect $1$-factorisations, mutually orthogonal Latin squares and atomic Latin squares. We first characterise quadratic Latin squares which are devoid of $2 \times 2$ Latin subsquares. Let $G$ be a graph and $\mathcal{F}$ a $1$-factorisation of $G$. If the union of every pair of $1$-factors in $\mathcal{F}$ induces a Hamiltonian cycle in $G$ then $\mathcal{F}$ is called perfect, and if there is no pair of $1$-factors in $\mathcal{F}$ which induce a Hamiltonian cycle in $G$ then $\mathcal{F}$ is called anti-perfect.
		We use quadratic Latin squares to construct new examples of anti-perfect $1$-factorisations of complete graphs and complete bipartite graphs. We also demonstrate that for each odd prime $p$, there are only finitely many orders $q$, which are powers of $p$, such that quadratic Latin squares of order $q$ could be used to construct perfect $1$-factorisations of complete graphs or complete bipartite graphs.
		
		\medskip
		
		\noindent Keywords: Latin square, $1$-factorisation, intercalate, quadratic orthomorphism.
	\end{abstract}
	
	\section{Introduction}\label{s:intro}
	
	A \emph{Latin rectangle} is an $n \times m$ matrix, with $n \leq m$, on $m$ symbols such that each symbol occurs at most once in each row and column. A \emph{Latin square} is a square Latin rectangle. Let $L$ be a Latin square with symbol set $S$. We will index the rows and columns of $L$ by $S$ and we will denote the symbol in row $i$ and column $j$ of $L$ by $L_{i, j}$. 
	
	Let $\F_q$ denote the finite field with $q$ elements. Let $\mathcal{R}_q$ and $\mathcal{N}_q$ denote the set of quadratic residues, and quadratic non-residues of the multiplicative group $\F_q^*$, respectively. Let $\{a, b\} \subseteq \F_q$ be such that $\{ab, (a-1)(b-1)\} \subseteq \mathcal{R}_q$. We can then define a $q \times q$ Latin square $\mathcal{L}[a, b]$ by
	\[
	(\mathcal{L}[a, b])_{i, j} = \begin{cases}
		i & \text{if } j = i, \\
		i + a(j-i) & \text{if } j-i \in \R_q, \\
		i + b(j-i) & \text{if } j-i \in \N_q.
	\end{cases}
	\]
	Such squares are called \emph{quadratic Latin squares}. The condition $\{ab, (a-1)(b-1)\} \subseteq \R_q$ ensures that $\mathcal{L}[a, b]$ is a Latin square~\cite{MR3837138}. Quadratic Latin squares have previously been used to construct perfect $1$-factorisations~\cite{hon, MR4070707, MR2134185}, mutually orthogonal Latin squares~\cite{MR3837138, MR1222645}, atomic Latin squares~\cite{MR2134185}, Falconer varieties~\cite{hon}, and maximally non-associative quasigroups~\cite{MR4275825, numquad}. Quadratic Latin squares are the main focus of this paper.
	
	A \emph{Latin subrectangle} of a Latin square is a submatrix which is itself a Latin rectangle. A \emph{Latin subsquare} is a square Latin subrectangle. An \emph{intercalate} is a $2 \times 2$ Latin subsquare. A Latin square is called \emph{$N_2$} if it contains no intercalates. It is known~\cite{MR401504, MR427101, MR396298, MR1820691} that an $N_2$ Latin square of order $n$ exists if and only if $n \not\in \{2, 4\}$. Such squares are also known to be rare~\cite{MR4488316, MR1685535} and can be used to construct disjoint Steiner triple systems~\cite{MR401504}. We completely characterise when a quadratic Latin square is $N_2$.
	
	\begin{thm}\label{t:n2}
		Let $q$ be an odd prime power. The Latin square $\mathcal{L}[a, b]$ of order $q$ contains an intercalate if and only if
		\[
		(2ab-a-b)(a+b)(a-1) \in \R_q \text{ and } \{2(a+b-2)(a-1), 2a(a+b)\} \subseteq \N_q,
		\]
		or both $q \equiv 1 \bmod 4$ and $b \in \{2-a, a/(2a-1), -a\}$.
	\end{thm}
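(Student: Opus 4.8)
My plan is to translate the intercalate condition into the language of the associated quadratic orthomorphism and then run a finite case analysis organised by quadratic characters (write $\chi$ for the quadratic character of $\F_q$). Since $(\L[a,b])_{i,j} = i + \theta(j-i)$ for the orthomorphism $\theta$ given by $\theta(z) = az$ on $\R_q$, $\theta(z)=bz$ on $\N_q$ and $\theta(0)=0$, the map $(i,j)\mapsto(i+r,j+r)$ is an autotopism, so after translating I may assume an intercalate is recorded by a triple $(s,t,\rho)$ with $\rho \neq 0$, $s \neq t$, and
\[
\theta(t+\rho)-\theta(s) = \rho = \theta(s+\rho)-\theta(t),
\]
where $s = c_2 - r_2$, $t = c_1 - r_2$ and $\rho = r_2 - r_1$. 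I would also record the two further symmetries I intend to exploit: scaling $(s,t,\rho)\mapsto(\lambda s,\lambda t,\lambda\rho)$ by $\lambda \in \R_q$ is an autotopism (so the character of $\rho$ may be chosen freely), and the substitution $(s,t,\rho)\mapsto(t+\rho,s+\rho,-\rho)$ corresponds to simultaneously swapping the two rows and the two columns and hence fixes the underlying intercalate. Writing $c(z)\in\{a,b\}$ for the coefficient selected by the character of $z$, each equation becomes linear once the characters of $s,t,s+\rho,t+\rho$ are fixed, so the natural invariant is the pattern $(\chi(s),\chi(t),\chi(s+\rho),\chi(t+\rho))\in\{+,-\}^4$.

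The swap $s\leftrightarrow t$ together with the row/column swap generates a group of order $4$ acting on the $16$ patterns with seven orbits, and I would dispose of them in turn. The two constant patterns force $a=1$ or $b=1$, both excluded; the orbit with $\{\chi(s),\chi(t)\}$ opposite to $\{\chi(s+\rho),\chi(t+\rho)\}$ forces $(a+b)(s-t)=0$; and two further orbits force $a+b=2$ and $a+b=2ab$ respectively. The remaining two orbits, represented by $(+,+,+,-)$ and $(+,-,-,-)$, are non-degenerate: solving the linear system gives a unique $(s,t)$ with $s \neq t$ (as $a \neq 1$), and imposing the four prescribed characters, with $\chi(\rho)$ cancelling in pairs, yields exactly $(2ab-a-b)(a+b)(a-1)\in\R_q$ together with $\{2(a+b-2)(a-1),2a(a+b)\}\subseteq\N_q$; the requirement that the four quantities whose characters must agree share a common sign is precisely this conjunction. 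The two non-degenerate orbits are interchanged by $a\leftrightarrow b$ (scaling by a non-residue sends $\L[a,b]$ to $\L[b,a]$), and I would check that the displayed condition is invariant under $a\leftrightarrow b$ using the standing hypotheses $ab,(a-1)(b-1)\in\R_q$; hence both orbits produce the same condition.

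It remains to show that the three degenerate orbits contribute intercalates exactly in the special cases, and here two observations organise everything. First, I would verify that any intercalate meeting the main diagonal forces $b\in\{2-a,-a,a/(2a-1)\}$, so that for all other $b$ no diagonal cell occurs and the only intercalates are the generic ones above; second, for each special value a factor in the main condition vanishes (e.g.\ $a+b=0$ kills $2a(a+b)$), so the main condition cannot hold there. This settles the non-special $b$ completely. For special $b$ I would first note that validity of $\L[a,b]$ already forces $q\equiv1\bmod4$, since $ab$ and $(a-1)(b-1)$ each acquire a factor $-1$ in these cases; moreover the three special values are permuted by the conjugacy operations $(a,b)\mapsto(1-a,1-b)$ and $(a,b)\mapsto(1/a,1/b)$, which preserve the number of intercalates, so it suffices to treat $b=-a$.

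The hard part, as I see it, is the existence statement in this last case. For $b=-a$ the degenerate orbit leaves a one-parameter family $s+t = -\rho(a+1)/a$ cut out by four quadratic-character conditions on $s$, and I expect a clean explicit intercalate to be available only for some sign patterns of $\chi(a),\chi(a\pm1)$, so the decisive step is to prove this family non-empty in general. I anticipate doing this by the standard expansion that writes the number of admissible $s$ as $q/16 + O(\sqrt{q})$ via the Weil bound, giving existence for all but finitely many $q$, with the residual small orders verified directly; confirming completeness over the diagonal cases and checking those small orders is where I expect the bookkeeping to be most delicate.
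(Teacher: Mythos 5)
Your plan is correct and would yield the theorem; the underlying computation (fix quadratic characters, solve the resulting linear equations, convert to character conditions on $a,b$, then use a Weil-type bound plus a finite check for the special families) is the same as the paper's, but your decomposition is genuinely different. The paper obtains \tref{t:n2} as the $c=2$ specialisation of a general machinery for row cycles: it reduces to the row permutation $r_{0,1}$ via \lref{l:quadrowperms}, splits transpositions into Type One cycles (\lref{l:t1}), Type Two cycles (\lref{l:n2t2}, \cyref{c:n2t2}) and the cycles through $0$ and $a$ (\lref{l:n2alph0}, \lref{l:n2alpha}), and then treats $b\in\{2-a,a/(2a-1),-a\}$ by three separate explicit lemmas plus \tref{t:weil} and a computer search. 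You instead work directly with the $2\times2$ subsquare equations $\theta(t+\rho)-\theta(s)=\rho=\theta(s+\rho)-\theta(t)$, and your seven-orbit analysis of sign patterns cleanly unifies what the paper splits across Type Two and the exclusions from $\G_2$: your three non-constant degenerate orbits produce exactly $a+b=0$, $a+b=2$ and $a+b=2ab$, i.e.\ the three exceptional families, and I verified that your non-degenerate orbit $(+,+,+,-)$ does reduce, after pairing off $\chi(\rho)$, to precisely the displayed condition. Your conjugacy reduction of the three special families to $b=-a$ via $(a,b)\mapsto(1-a,1-b)$ and $(a,b)\mapsto(1/a,1/b)$ is a genuine economy over the paper, which handles each family by ``similar arguments.'' What each approach buys: yours is shorter and self-contained for intercalates; the paper's formalism is reusable for longer cycles and is in fact what drives \tref{t:quadneg}.

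Two small imprecisions to repair. First, your parenthetical that scaling by $\lambda\in\R_q$ lets ``the character of $\rho$ be chosen freely'' is not right as stated (residue scaling preserves $\chi(\rho)$); what you actually use, correctly, is that both square classes of $\rho$ must be searched, and that scaling by a non-residue exchanges them with the $a\leftrightarrow b$ swap. Second, your claim that the non-degenerate orbits give a unique $(s,t)$ ``as $a\neq1$'' conflates two conditions: $a\neq1$ gives $s\neq t$, but invertibility of the linear system needs the determinant $a(a-b)\neq0$, so the valid parameter $b=a$ slips through your orbit list. It is disposed of in one line ($\mathcal{L}[a,a]$ is isotopic to a group table and has no intercalates, and the displayed condition fails since $2(a+b-2)(a-1)=4(a-1)^2\in\R_q$), but it must be said. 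Finally, be aware that in the residual small orders for the special families, some squares (e.g.\ $q=13$, $a=3$, $b=2-a$) have intercalates only in one square class of $\rho$, so your direct verification must search both classes; with that caveat the finite check succeeds, as the paper's does.
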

	
	Let $G$ be a graph. A \emph{$1$-factor} of $G$ is a collection $M$ of its edges such that every vertex of $G$ is incident to exactly one edge in $M$. A \emph{$1$-factorisation} of $G$ is a partition of its edges into $1$-factors. Let $\mathcal{F}$ be a $1$-factorisation of $G$. Each pair of $1$-factors in $\mathcal{F}$ induces a subgraph of $G$, which is the union of cycles of even length. We will say that $\mathcal{F}$ contains these cycles. The problem of investigating $1$-factorisations which satisfy certain conditions on their cycles has received some attention. The most notable case of this is the study of perfect $1$-factorisations. If all the cycles in $\mathcal{F}$ are Hamiltonian then $\mathcal{F}$ is called \emph{perfect}. See~\cite{MR2597247, MR2311238} for applications of perfect $1$-factorisations to computer science. We will be most interested in studying $1$-factorisations of complete graphs and complete bipartite graphs. It is known that a $1$-factorisation of $K_{2n}$ exists for all positive integers $n$ and a $1$-factorisation of $K_{n, n}$ exists for all positive integers $n$. 
	
	In $1964$, Kotzig~\cite{MR0173249} conjectured that a perfect $1$-factorisation of $K_{2n}$ exists for all positive integers $n$. Despite receiving lots of attention, this conjecture remains far from resolved. There are only three known infinite families~\cite{MR2216455, MR0173249} of perfect $1$-factorisations of complete graphs. These families prove the existence of perfect $1$-factorisations of $K_{2n}$ where $2n \in \{p+1, 2p\}$ for an odd prime $p$. Perfect $1$-factorisations of $K_{2n}$ are also known to exist for some sporadic values of $n$. See~\cite{MR4070707} for a list of these values. 
	
	It is known that a perfect $1$-factorisation of $K_{n, n}$ can only exist if $n=2$ or $n$ is odd. Laufer~\cite{MR582276} showed that if there exists a perfect $1$-factorisation of $K_{2n}$ for some positive integer $n$, then there exists a perfect $1$-factorisation of $K_{2n-1, 2n-1}$. It is thus conjectured that a perfect $1$-factorisation of $K_{n, n}$ exists for all odd $n$. This conjecture also remains far from resolved. There are eight known infinite families of perfect $1$-factorisations of complete bipartite graphs~\cite{hon, MR1899629, MR2216455, MR582276}. These families prove the existence of perfect $1$-factorisations of $K_{n, n}$ where $n \in \{p, 2p-1, p^2\}$ for an odd prime $p$. There are also known perfect $1$-factorisations of $K_{n, n}$ for some sporadic values of $n$.
	
	A contrasting problem to the construction of perfect $1$-factorisations is the construction of $1$-factorisations which contain only short cycles. H\'{a}ggkvist~\cite{hagg} asked the following question. Given a graph $G$, what is the least integer $m$ such that there is a $1$-factorisation of $G$ whose cycles are all of length at most $m$. Particular interest has been given to the case where $G$ is a complete bipartite graph. It has been conjectured that for all sufficiently large $n$ there exists a $1$-factorisation of $K_{n, n}$ whose cycles are all of length at most six. This problem has been studied in~\cite{MR3537912, MR422088, MR2433008, MR2475030, MR2071339}. The current best known result, due to Benson and Dukes~\cite{MR3537912}, is that, for each positive integer $n$, there exists a $1$-factorisation of $K_{n, n}$ whose cycles are all of length at most $182$. The current best known result for complete graphs is due to Dukes and Ling~\cite{MR2475030}. It states that for all positive integers $n$, there exists a $1$-factorisation of $K_{2n}$ whose cycles are all of length at most $1720$.
	
	Let $L$ be a Latin square with symbol set $S$ of size $n$. For each $\{i, j\} \subseteq S$ with $i \neq j$, the permutation mapping row $i$ to row $j$, denoted by $r_{i, j}$, is defined by $r_{i, j}(L_{i, k}) = L_{j, k}$ for all $k \in S$. We call such permutations \emph{row permutations} of $L$ and we call each cycle in a row permutation a \emph{row cycle} of $L$. Every row cycle of $L$ has length at least two. If every row permutation of $L$ consists of a single row cycle of length $n$ then $L$ is called \emph{row-Hamiltonian}. A row cycle of length $m$ in $L$ induces a $2 \times m$ Latin subrectangle of $L$. So $L$ is $N_2$ if and only if it contains no row cycle of length two, and $L$ is row-Hamiltonian if and only if it does not contain any $m \times k$ Latin subrectangles with $1 < m \leq k < n$. 
	
	An \emph{ordered $1$-factorisation} of a graph $G$ is a $1$-factorisation with a total ordering on its $1$-factors. Let $L$ be an $n \times n$ Latin square. There is a known method to construct an ordered $1$-factorisation $\mathcal{E}$ of $K_{n, n}$ from $L$. Furthermore, for each row cycle of length $\ell$ in $L$, there is a corresponding cycle of length $2\ell$ in $\mathcal{E}$. This construction is reversible. If $L$ satisfies some symmetry conditions, then we can also construct a $1$-factorisation $\mathcal{F}$ of $K_{n+1}$ from $L$. For every row cycle of length $\ell$ in $L$, there is a corresponding cycle in $\mathcal{F}$ which has length $2\ell$ or $\ell+1$. These constructions will be discussed in further detail in \sref{s:back}. Many authors have used Latin squares to construct $1$-factorisations of graphs, including perfect $1$-factorisations. We will denote the $1$-factorisation of $K_{n, n}$ obtained from a Latin square $L$ by $\mathcal{E}(L)$, and we will denote the $1$-factorisation of $K_{n+1}$ obtained from a suitable Latin square $L$ by $\mathcal{F}(L)$.
	
	Our second main result concerns the row cycles of quadratic Latin squares.
	
	\begin{thm}\label{t:quadneg}
		Let $P$ denote the set of all odd primes. There exists a function $f : P \to \mathbb{N}$ such that every quadratic Latin square of order $q = p^d$ contains a row cycle of length at most $p$ if $d \geq f(p)$. Furthermore, if $L = \mathcal{L}[a, b]$ is a quadratic Latin square of order $q$ with $\{a, b\} \not\subseteq \F_p \cap \N_q$ then $L$ contains a row cycle of length exactly $p$ if $d \geq f(p)$.
	\end{thm}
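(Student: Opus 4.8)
The plan is to recast row cycles as cycles of an explicit permutation of $\F_q$ and then to exhibit a short cycle by a character-sum count that becomes effective once $d$ is large relative to $p$. Writing $\phi(x)=ax$ for $x\in\R_q\cup\{0\}$ and $\phi(x)=bx$ for $x\in\N_q$, we have $(\L[a,b])_{i,j}=i+\phi(j-i)$. The translation $i\mapsto i+c$ conjugates $r_{i,j}$ to $r_{0,j-i}$, and scaling by an element of $\R_q$ is an autotopism, so the cycle structure of $r_{0,t}$ depends only on the residue class of $t$. Conjugating $r_{0,t}$ by the row-$0$ map $k\mapsto\phi(k)$ turns it into the column permutation $\pi_t(k)=\phi^{-1}\!\big(t+\phi(k-t)\big)$, whose cycle lengths are precisely the row-cycle lengths of $\L[a,b]$.

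The central observation is that $\pi_t$ is piecewise a translation. If at a point $k$ the $a$-branch of both $\phi$ and $\phi^{-1}$ is selected---which happens exactly when $k-t\in\R_q\cup\{0\}$ and $k+c_a\in\R_q\cup\{0\}$, where $c_a=t(1-a)/a$---then $\pi_t(k)=k+c_a$; similarly $\pi_t(k)=k+c_b$ with $c_b=t(1-b)/b$ on the $b$-branch. As $(a-1)(b-1)\in\R_q$ forces $a,b\neq1$, both $c_a,c_b$ are nonzero, and every nonzero element of $\F_{p^d}$ has additive order $p$. Hence any orbit of $\pi_t$ that uses a single branch at every step is a cycle of length exactly $p$. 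Concretely, such an $a$-cycle exists as soon as there is a coset $k_0+\F_p c_a$ of the line $\F_p c_a$ all of whose points, and all of whose translates by $-t$, lie in $\R_q\cup\{0\}$: these conditions force the $a$-branch at each of the $p$ steps, and the orbit closes up because $p\,c_a=0$.

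To find such a coset for $d\geq f(p)$, let $\chi$ be the quadratic character of $\F_q$. Up to the $O(1)$ cosets meeting a zero of a factor, the number of admissible cosets is
\[
\frac1p\sum_{X\in\F_q}\prod_{m\in\F_p}\frac{1+\chi(X+mc_a)}{2}\cdot\frac{1+\chi(X-t+mc_a)}{2}.
\]
Expanding yields a main term $q/(p\,2^{2p})$ together with sums $\sum_{X}\chi(g(X))$, where $g$ is a product of distinct monic linear factors drawn from $\{X+mc_a,\;X-t+mc_a:m\in\F_p\}$ and so is squarefree of degree at most $2p$; by Weil's bound each such sum has absolute value at most $2p\sqrt q$. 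The main term therefore dominates once $q$ exceeds a bound of the form $2^{O(p)}$, and I would define $f(p)$ to be the least $d$ forcing $p^d$ past this bound. This produces a cycle of length exactly $p$ and, under the hypothesis $\{a,b\}\not\subseteq\F_p\cap\N_q$, establishes the second assertion. (When some multiplier lies outside $\F_p$ the $2p$ factors are distinct; if instead $a,b\in\F_p$ the two conditions coincide and one runs the same count with the $p$ distinct factors $X+mc_a$, so $g$ is squarefree in either case.)

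For the first assertion it remains to treat $\{a,b\}\subseteq\F_p\cap\N_q$. This forces $d$ odd, since $\F_p^*\subseteq\R_q$ when $d$ is even; and for odd $d$ the quadratic character of $\F_q$ restricts on $\F_p^*$ to that of $\F_p$. Hence the submatrix of $\L[a,b]$ on the rows, columns and symbols indexed by $\F_p$ is itself the quadratic Latin square $\L[a,b]$ of order $p$, an order-$p$ subsquare; for $i,j\in\F_p$ the restriction of $r_{i,j}$ to the symbol set $\F_p$ permutes $\F_p$, and its cycles are row cycles of length at most $p$. Combined with the previous paragraph this gives a row cycle of length at most $p$ for every quadratic Latin square of order $p^d$ once $d\geq f(p)$. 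I expect the character-sum step to be the main obstacle: one must verify that every $g$ occurring in the expansion is squarefree rather than a perfect square (so that Weil's bound saves a factor of $\sqrt q$), and then track the resulting threshold uniformly over both branches and both residue classes of $t$ to extract a single admissible function $f$.
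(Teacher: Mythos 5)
Your proposal is correct and follows essentially the same route as the paper: single-branch orbits of the row permutation are translations by a nonzero element of additive order $p$, hence $p$-cycles, and their existence is established by a quadratic-character count with the Weil bound, with the factor set halved when $\{a,b\}\subseteq\F_p\cap\R_q$ exactly as the paper does to avoid a perfect-square product. The only cosmetic differences are that you dispose of the $\{a,b\}\subseteq\F_p\cap\N_q$ case via an order-$p$ Latin subsquare on $\F_p$ where the paper instead observes that the orbit of $0$ stays inside $\F_p$, and that your conjugated permutation $\pi_t$ makes the branch conditions uniform in the residue class of $a$ where the paper states two parallel lemmas and normalises $a\notin\F_p$ by the isomorphism $\mathcal{L}[a,b]\cong\mathcal{L}[b,a]$.
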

	
	We will prove \tref{t:quadneg} by constructing a suitable function $f$ where $f(p)$ is asymptotically equal to $p\log(16)/\log(p)$. We note that this function $f$ we construct is not minimal.
	
	The \emph{cycle structure} of a permutation is a sorted list of the lengths of its cycles. Let $L = \mathcal{L}[a, b]$ be a quadratic Latin square. The cycle structure of any row permutation of $L$ is equal to the cycle structure of the row permutation $r_{0, 1}$ of $L$ or the cycle structure of the row permutation $r_{0, 1}$ of $\mathcal{L}[b, a]$ (see \lref{l:quadrowperms}). This makes it tempting to consider quadratic Latin squares when searching for perfect $1$-factorisations or $1$-factorisations which contain only short cycles. However \tref{t:quadneg} tells us that quadratic Latin squares of order $p^d$ will not be useful for constructing perfect $1$-factorisations if $d$ is too large. It also limits the usefulness of quadratic Latin squares of order $p^d$ for constructing $1$-factorisations which contain only short cycles if $d$ is too large, with the possible exception of the squares $\mathcal{L}[a, b]$ with $\{a, b\} \subseteq \F_p \cap \N_q$. Note that such squares can only exist when $d$ is odd.
	
	An \emph{anti-perfect} $1$-factorisation of a graph is a $1$-factorisation which does not contain any Hamiltonian cycles. It is known~\cite{MR2071339} that an anti-perfect $1$-factorisation of $K_{n, n}$ exists if and only if $n \not\in \{2, 3, 5\}$. The existence question of anti-perfect $1$-factorisations of complete graphs was almost completely resolved. It is known (see e.g.~\cite{MR3954017}) that an anti-perfect $1$-factorisation of $K_{2n}$ exists if $2 < 2n \equiv 2 \bmod 6$ or $4 < 2n \equiv 4 \bmod 6$. These $1$-factorisations come from Steiner $1$-factorisations. If $2n \equiv 0 \bmod 6$ then an anti-perfect $1$-factorisation of $K_{2n}$ exists if $12 \leq 2n \leq 100$. Also, the previously mentioned result of Dukes and Ling~\cite{MR2475030} implies the existence of an anti-perfect $1$-factorisation of $K_{2n}$ whenever $2n \geq 1722$. We resolve the existence problem of anti-perfect $1$-factorisations of complete graphs. 
	
	\begin{thm}\label{t:antiperf}
		There exists an anti-perfect $1$-factorisation of $K_{2n}$ if $2n \geq 8$.	
	\end{thm}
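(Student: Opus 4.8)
The plan is to reduce \tref{t:antiperf} to finitely many orders and then to realise each of them as $\mathcal{F}(L)$ for a carefully chosen Latin square $L$, so that no pair of $1$-factors can close up into a Hamiltonian cycle. First I would clear all but finitely many orders using the results quoted above: Steiner $1$-factorisations handle $2<2n\equiv 2\bmod 6$ and $4<2n\equiv 4\bmod 6$; for $2n\equiv 0\bmod 6$ anti-perfect $1$-factorisations are already known for $12\leq 2n\leq 100$, and the Dukes--Ling short-cycle bound supplies them for all $2n\geq 1722$. This leaves exactly the orders $2n\equiv 0\bmod 6$ with $102\leq 2n\leq 1716$. Writing $q:=2n-1$, every such $q$ satisfies $q\equiv 5\bmod 6$, so no Steiner triple system on $q$ points exists; this is precisely why these orders escape the Steiner constructions, and it is this finite family that must be attacked directly.

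The mechanism for these orders is the $\mathcal{F}$-construction of \sref{s:back}: from a Latin square $L$ of order $q$ satisfying the relevant symmetry hypotheses one obtains a $1$-factorisation $\mathcal{F}(L)$ of $K_{q+1}$, in which the cycles induced by a pair of $1$-factors correspond one-to-one with the row cycles of an associated row permutation of $L$, a row cycle of length $\ell$ giving a cycle of length $2\ell$ or $\ell+1$. A Hamiltonian cycle of $K_{q+1}$ is a single cycle, so under this correspondence a pair of $1$-factors is Hamiltonian only when the associated row permutation is a single row cycle, necessarily of length $q$. Consequently $\mathcal{F}(L)$ is anti-perfect as soon as no row permutation of $L$ is a full $q$-cycle, i.e. as soon as $L$ is nowhere row-Hamiltonian. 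Each remaining order therefore reduces to exhibiting one nowhere-row-Hamiltonian Latin square of order $q$ with the required symmetry.

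For the prime-power orders $q$ in the range I would take $L=\mathcal{L}[a,b]$ to be a quadratic Latin square. By \lref{l:quadrowperms} every row permutation of $L$ has the cycle structure of $r_{0,1}$ in $\mathcal{L}[a,b]$ or in $\mathcal{L}[b,a]$, so it suffices to select $\{a,b\}$ for which neither of these two permutations is a single $q$-cycle; this is a trivial finite check that succeeds for almost every admissible pair, and \tref{t:quadneg} confirms that short row cycles, and hence nowhere-row-Hamiltonicity, are forced once the exponent is large. This settles every prime-power order in the gap for which a quadratic Latin square meets the symmetry hypotheses.

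The remaining orders — the composite, non-prime-power $q\equiv 5\bmod 6$ in $[101,1715]$ — constitute the part of the finite gap inaccessible to quadratic Latin squares, and here I expect the real difficulty to sit. For each such $q$ I would produce a nowhere-row-Hamiltonian Latin square with the required symmetry, either recursively from smaller (prime-power) ingredients or by a bounded computer search over Latin squares satisfying the symmetry hypotheses. Since failing to be row-Hamiltonian is a generic condition, such squares should be abundant; the crux is to certify, for every one of these finitely many composite orders simultaneously, that a valid $L$ exists — that is, that the symmetry constraint demanded by the $\mathcal{F}$-construction and the nowhere-row-Hamiltonian condition can always be met together.
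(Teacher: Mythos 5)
Your reduction of the problem is sound: you correctly observe that a pair of $1$-factors in $\mathcal{F}(L)$ induces a Hamiltonian cycle precisely when the corresponding row permutation of $L$ is a single $q$-cycle, so it suffices to exhibit, for each remaining order, an idempotent involutory Latin square of order $q=2n-1$ none of whose row permutations is a $q$-cycle. But the construction step is where the proof actually lives, and your proposal does not supply it. For the prime-power orders your plan already breaks at the first case in your gap, $2n=102$, $q=101$: by \lref{l:qii} and the discussion following it, the only involutory quadratic Latin squares of order $q\equiv 1\bmod 4$ are those of the form $\mathcal{L}[a,a]$, which are isotopic to the Cayley table of $(\F_q,+)$ and hence row-Hamiltonian when $q$ is prime. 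So there is no admissible quadratic choice at all for prime $q\equiv 1\bmod 4$ (this is why \tref{t:quadanti} only claims anti-perfect $\mathcal{F}(L)$ when $n$ has a prime power divisor $m\neq 3$ with $m\equiv 3\bmod 4$). For the composite non-prime-power orders you explicitly defer to ``recursive constructions or a bounded computer search'' and an appeal to genericity; that is a statement of hope, not a proof, and it is exactly the part that needs an argument.

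The paper closes the gap with a single uniform device that you are missing: the PBD construction. Given a $\pbd(v,K)$ with at least two blocks and all block sizes odd, fill each block with an idempotent involutory Latin square of that (odd) order; by \lref{l:pbdideminv} the resulting square $L$ is idempotent and involutory, so $\mathcal{F}(L)$ is defined, and by \lref{l:pbdcyc} every row permutation $r_{i,j}$ stabilises setwise the proper nonempty block containing $i$ and $j$, hence can never be a single $v$-cycle (\cyref{c:pbdanti}). A result of Colbourn, Haddad and Linek guarantees such a PBD for every odd $v\geq 7$, which proves the theorem for all $2n\geq 8$ at once --- no case split mod $6$, no reliance on quadratic Latin squares, and no residual finite list. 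If you want to salvage your outline, the missing idea to import is precisely this block-stabilisation argument together with the existence result for PBDs with odd block sizes.
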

	
	We note that all $1$-factorisations of $K_{2n}$ are perfect if $2n \leq 6$. We also note that our contribution to \tref{t:antiperf} is little more than an observation that the method of Dukes and Ling~\cite{MR2475030} can be used to prove the existence of anti-perfect $1$-factorisations of $K_{2n}$ for almost all orders. 
	
	Let $L$ be a Latin square with symbol set $S$ of size $n$. By indexing the rows and columns of $L$ by $S$ we can consider $L$ as a set of $n^2$ triples of the form $(\text{row}, \text{column}, \text{symbol}) \in S^3$. A \emph{conjugate} of $L$ is a Latin square obtained from $L$ by uniformly permuting the elements of each triple. An \emph{atomic} Latin square is a Latin square whose conjugates are all row-Hamiltonian. Such squares have been studied in~\cite{MR2216455, MR4070707, MR2024246, MR2134185, MR1670298}. We define a Latin square of order $n$ to be \emph{anti-atomic} if none of its conjugates contain a row cycle of length $n$. We prove the following theorem, which is a strengthening of Theorem $5$ of~\cite{MR2071339}.
	
	\begin{thm}\label{t:antiatom}
		An anti-atomic Latin square of order $n$ exists for all $n \not\in \{2, 3, 5\}$.
	\end{thm}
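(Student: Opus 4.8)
The plan is to reformulate anti-atomicity, reduce the problem to a short list of ``base'' orders by a product construction, and then build those base cases. Recall that $L$ is anti-atomic precisely when, for each of its six conjugates $L'$, no row permutation $r_{i,j}$ of $L'$ is a single $n$-cycle. Since a row permutation acts on the $n$ symbols, the six conjugates yield six ``row-cycle structures'', and these collapse to three essentially distinct ones --- the row, column, and symbol cycle structures of $L$ --- so $L$ is anti-atomic if and only if none of these three species contains an $n$-cycle. The row species is exactly the obstruction to $\E(L)$ being perfect, so the row condition is equivalent to $\E(L)$ being anti-perfect; anti-atomicity demands the same for the column and symbol species as well.

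First I would prove a product lemma. For quasigroups $A$ of order $m$ and $B$ of order $k$, form the direct product $A \times B$ of order $mk$ on the symbol set $S_A \times S_B$. Conjugation commutes with the direct product, so every conjugate of $A \times B$ is a direct product of conjugates $A' \times B'$; moreover each row permutation of $A' \times B'$ is a product $r^{A'} \times r^{B'}$ of row permutations, and such a product is a single $mk$-cycle only if its first factor is a single $m$-cycle. Hence if $A$ is anti-atomic then $A \times B$ is anti-atomic for every $B$. Consequently, if an anti-atomic square of order $m$ exists, then one of order $mk$ exists for all $k \geq 1$, which reduces \tref{t:antiatom} to constructing anti-atomic squares on a generating set of orders.

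Next I would dispatch most orders using groups. For the Cayley table of a group $G$ of order $n$, every row permutation of every conjugate is left or right multiplication by a group element, so all three species have cycle lengths equal to the orders of elements of $G$. Thus the Cayley table of a non-cyclic group is anti-atomic, since no element has order $n$. Non-cyclic groups of order $n$ exist whenever $n$ is non-squarefree (e.g.\ $\mathbb{Z}_p \times \mathbb{Z}_{n/p}$ when $p^2 \mid n$) or squarefree with $\gcd(n, \phi(n)) \neq 1$. Combining this with the product lemma, every order is handled except the cyclic numbers with no prime factor exceeding $5$; a short check leaves only the primes $p \geq 7$ together with $n = 15$ (and $15$ must be built directly, since its only proper factors are the excluded orders $3$ and $5$).

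The main obstacle is therefore an anti-atomic square of each prime order $p \geq 7$, and here I would use quadratic Latin squares. Since the transpose of $\mathcal{L}[a,b]$ is again a quadratic square (with parameters $1-a$ and $1-b$), and likewise its remaining conjugates are quadratic squares whose parameters are explicit rational functions of $a$ and $b$, the three species of $\mathcal{L}[a,b]$ are the row-cycle structures of three explicit quadratic squares, each governed by a single permutation $r_{0,1}$ via \lref{l:quadrowperms}. The task is to choose a valid pair $\{a,b\}$ for which none of these three permutations is a $p$-cycle. The delicate part is guaranteeing that such a pair exists for every $p \geq 7$ simultaneously: I would obtain this for all large $p$ by a counting (character-sum) estimate bounding the number of parameter pairs producing a full cycle in some species, and verify the finitely many remaining small primes, together with the order $15$, by explicit example. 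Assembling the product lemma, the group constructions, and these base cases yields anti-atomic squares for all $n \notin \{2,3,5\}$.
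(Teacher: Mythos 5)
Your overall architecture (product lemma $+$ non-cyclic groups $+$ direct constructions at prime orders and at $15$) is a genuinely different route from the paper's, which obtains every order $v = 7$ and $v \geq 9$ in one stroke from the PBD construction: a Hartman--Heinrich $\pbd(v, K)$ with $2 \notin K$ and at least two blocks yields a Latin square all of whose conjugates arise from the same construction (\lref{l:pbdconj}) and all of whose row permutations fix a proper block setwise (\lref{l:pbdcyc}), hence is anti-atomic; orders $4, 6, 8$ are then handled by non-cyclic groups exactly as you propose, and $2, 3, 5$ excluded by inspection. Your reduction itself is sound: the product lemma is \cyref{c:antiatomprod} in the paper, the collapse of the six conjugates to three cycle species is correct, and your case analysis correctly isolates the primes $p \geq 7$ and $n = 15$ as the only orders not reachable from an anti-atomic divisor supplied by a non-cyclic group.

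The gap is in the step you yourself flag as delicate. A character-sum estimate ``bounding the number of parameter pairs producing a full cycle in some species'' is not directly available: the event that $\alpha[a, b]$ is a single $p$-cycle is not a bounded-degree quadratic-character condition on $(a, b)$, and the machinery of \sref{ss:quadcyc} only controls the number of permutations containing a $c$-cycle for fixed small $c$ (the degrees and the number of sequences involved grow like $2c$ and $4^c$, so Weil-type bounds give nothing when $c = p \sim q$). The workable substitute --- and the one the paper uses when it establishes the quadratic prime-power case inside the proof of \tref{t:quadanti} --- is to force a short cycle rather than to exclude a long one: by \tref{t:n2} the presence of an intercalate is a quadratic-character condition on polynomials of degree at most three in $(a, b)$, it is invariant under conjugation (so a single computation covers all three species at once), and by \lref{l:quadrowperms} it forces a transposition into every row permutation of every conjugate, apart from the finitely many exceptions of \lref{l:1mod4notrans}. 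The complementary count to \lref{l:numn2} shows that $q^2/32 + O(q^{3/2})$ such squares exist, which is positive for all large $q$; the small primes and $n = 15$ then require explicit examples as you say (for $15$ a quadratic square is unavailable, but the PBD construction over a Steiner triple system of order $15$ works). So your route closes, but only after replacing your stated counting target with a conjugation-invariant short-cycle condition.
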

	
	\tref{t:quadneg} suggests that we could build anti-perfect $1$-factorisations and anti-atomic Latin squares using quadratic Latin squares. We can indeed achieve this for some orders. To describe our results we need the following definition. 
	
	Let $L$ and $M$ be Latin squares with symbol sets $S$ and $T$, respectively. The \emph{direct product} of $L$ and $M$, denoted by $L \times M$, is the Latin square with symbol set $S \times T$ defined by $(L \times M)_{(a, b), (x, y)} = (L_{a, x}, M_{b, y})$. We can now state our last main result.
	
	\begin{thm}\label{t:quadanti}
		Let $n \not\in \{1, 3, 5, 15\}$ be an odd integer. There exists an anti-atomic Latin square of order $n$ which is the direct product of quadratic Latin squares. If $n$ contains a prime power divisor $m \neq 3$ with $m \equiv 3 \bmod 4$ then there exists a Latin square $L$ which is the direct product of quadratic Latin squares such that the $1$-factorisation $\mathcal{F}(L)$ of $K_{n+1}$ is well-defined and anti-perfect.
	\end{thm}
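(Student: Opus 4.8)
The plan is to realise $L$, in both parts, as a direct product $L_1\times\cdots\times L_k$ of quadratic Latin squares whose orders are (coprime) prime powers multiplying to $n$, and to control the lengths of row cycles through three structural facts. First, conjugation distributes over the direct product, so each of the six conjugates of $L$ is the product of the corresponding conjugates of the $L_i$. Second, the row cycles of a direct product arise as least common multiples of the row-cycle lengths of the factors; since $\prod_i \ell_i \leq \prod_i |L_i| = n$ with equality forcing $\ell_i = |L_i|$ for every $i$, a row permutation of $L_1\times\cdots\times L_k$ (with pairwise coprime factors) can be a single $n$-cycle only if every factor simultaneously contributes a full cycle. Third, \lref{l:quadrowperms} collapses the cycle structure of every row permutation of a quadratic square to that of $r_{0,1}$ of $\mathcal{L}[a,b]$ or of $\mathcal{L}[b,a]$, so verifying a factor reduces to a single computation per conjugate. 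I would also record that a quadratic square $\mathcal{L}[a,b]$ is commutative precisely when its order is $\equiv 3 \bmod 4$ and $a+b=1$, which is the source of the divisibility hypothesis in Part~2.

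For Part~1 the target is reduced to a single question: which prime-power orders admit an \emph{anti-atomic} quadratic square? Because one anti-atomic coprime factor already blocks a full $n$-cycle in every conjugate, it suffices that $n$ possess at least one prime-power divisor admitting such a square. The high-exponent orders are immediate: after establishing that each of the six conjugates of a quadratic square is isotopic to a quadratic square (so that, row-cycle structure being an isotopy invariant, \tref{t:quadneg} governs all conjugates), any order $p^{d}$ with $d\geq f(p)$ has every conjugate carrying only cycles of length at most $p<p^{d}$, hence is anti-atomic. I would then argue that the only prime powers failing to admit an anti-atomic quadratic are $3$ and $5$; consequently the orders with no anti-atomic product-of-quadratics are exactly the products using at most one factor $3$ and at most one factor $5$, namely $\{1,3,5,15\}$, matching the excluded set.

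For Part~2 the additional constraint is that $\mathcal{F}(L)$ be well-defined, which forces $L$ to satisfy the symmetry hypotheses of the $K_{n+1}$ construction, i.e.\ commutativity; this is exactly where a prime-power divisor $m\neq 3$ with $m\equiv 3\bmod 4$ is needed, since it supplies a genuine commutative quadratic factor $\mathcal{L}[a,1-a]$ (for instance $\mathcal{L}[3,5]$ of order $7$, with cycle structure $(3,2,2)$). I would take this $m$-factor to have short row cycles and assemble the remaining prime-power factors so that $L$ is commutative of order $n$. Since a Hamiltonian cycle of $K_{n+1}$ in $\mathcal{F}(L)$ corresponds to a row cycle of $L$ of length $n$ (giving a cycle of length $\ell+1$) or of length $(n+1)/2$ (giving a cycle of length $2\ell$), anti-perfectness becomes the statement that no lcm of factor cycle-lengths equals $n$ or $(n+1)/2$: the short-cycle $m$-factor rules out the value $n$, and the divisibility constraints forced by $m\equiv 3\bmod 4$ are used to rule out $(n+1)/2$.

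The hardest step is the low-exponent base case, and in particular the prime orders $n=p$, where no nontrivial factorisation exists so a single quadratic square of order $p$ must itself be anti-atomic (for Part~1) and simultaneously commutative with controlled cycles (for Part~2). Because \tref{t:quadneg} is vacuous at $d=1$, I expect to need a separate existence argument here: bounding the number of admissible pairs $(a,b)$ for which some conjugate of $\mathcal{L}[a,b]$ is row-Hamiltonian and showing that this count is strictly smaller than the number of admissible pairs, so that an anti-atomic (respectively anti-perfect-inducing) choice survives for every sufficiently large prime, with the finitely many remaining small primes and prime powers, together with the precise failure at orders $3$ and $5$, settled by direct computation.
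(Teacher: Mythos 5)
Your overall skeleton matches the paper's --- reduce to prime-power factors, use the fact that row cycles of a direct product are lcms of factor cycles (\lref{l:dpcyc}) so that one ``blocking'' coprime factor kills every potential $n$-cycle in every conjugate (\cyref{c:antiatomprod}), and source the symmetry for $\mathcal{F}(L)$ from a divisor $m\equiv 3\bmod 4$. But the core existence step for Part~1 has a genuine gap. You read \tref{t:quadneg} as saying that for $d\geq f(p)$ \emph{every} row cycle has length at most $p$; the theorem only asserts that \emph{some} row cycle is short, which (especially when $q\equiv 1\bmod 4$, where \lref{l:quadrowperms} allows two distinct cycle structures) does not preclude other row permutations being $q$-cycles. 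Worse, since $f(p)\sim p\log(16)/\log(p)$, the range $d\geq f(p)$ misses essentially every order that matters (all primes, all $p^2$, etc.), and for those you defer to ``bounding the number of pairs $(a,b)$ with a row-Hamiltonian conjugate'' --- exactly the estimate the paper says it cannot carry out (end of \sref{ss:n2}: controlling Type One $c$-cycles for large $c$ requires knowing when certain products of Laurent polynomials are squares). The mechanism the paper actually uses, which you have missed, is the intercalate characterisation: by \tref{t:n2} and \lref{l:1mod4notrans}, outside finitely many exceptional pairs a quadratic square contains an intercalate if and only if \emph{every} row permutation contains a transposition; intercalates are conjugation-invariant and conjugates of quadratic squares are quadratic, so such a square is automatically anti-atomic. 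Existence of intercalate-containing (non-exceptional) quadratic squares then comes from the Weil bound plus a finite computation, and the only failing prime powers are $3$ and $5$.

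For Part~2 you misidentify the symmetry hypothesis of the $K_{n+1}$ construction as commutativity ($a+b=1$). The paper's $\mathcal{F}(L)$ requires $L$ to be idempotent and \emph{involutory} (equal to its $(1,3,2)$-conjugate), which for quadratic squares forces $b=a^{-1}$ with $q\equiv 3\bmod 4$ (\lref{l:qii}) or $b=a$ --- not $b=1-a$. Your example $\mathcal{L}[3,5]$ of order $7$ satisfies both $3\cdot 5\equiv 1$ and $3+5\equiv 1$, which hides the error, but in general your factors would not make $\mathcal{F}(L)$ well-defined. Finally, your concern about row cycles of length $(n+1)/2$ dissolves: by \lref{l:lsymcyc} a cycle of length $2\ell$ in $\mathcal{F}(L)$ comes from a pair of \emph{distinct} mirror row cycles of length $\ell$ each, occupying $2\ell\leq n<n+1$ symbols, so it is never Hamiltonian; anti-perfectness is exactly the absence of row permutations that are $n$-cycles, which the order-$m$ factor supplied by \lref{l:notrh} rules out via the lcm argument. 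So no extra divisibility argument is needed there, but the involutory factors must be chosen correctly for the statement to make sense at all.
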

	
	\tref{t:quadanti} implies that we can also construct anti-perfect $1$-factorisations of complete bipartite graphs using direct products of quadratic Latin squares.
	
	The structure of this paper is as follows. In \sref{s:back} we study the relationship between Latin squares and $1$-factorisations in more depth. In \sref{ss:quadcyc} we develop a general method to study the row cycles of quadratic Latin squares. We will then apply these methods in \sref{ss:n2} to characterise quadratic Latin squares which contain row cycles of length two. This will allow us to prove \tref{t:n2}. In \sref{ss:p-cycles} we will prove \tref{t:quadneg}, and in \sref{ss:antiperf} we will prove \tref{t:antiperf}, \tref{t:antiatom} and \tref{t:quadanti}. In \sref{s:conc} we mention how \tref{t:n2} can be used to construct $N_2$ Latin squares of any odd order, and we discuss the usefulness of quadratic Latin squares for constructing $1$-factorisations which contain only short cycles.
	
	\section{Background}\label{s:back}
	
	Let $L$ be a Latin square with symbol set $S$ of size $n$. Let $\text{Sym}(S)$ denote the group of permutations of $S$. For $\{\sigma_1, \sigma_2, \sigma_3\} \subseteq \text{Sym}(S)$ we can define a Latin square $L(\sigma_1, \sigma_2, \sigma_3)$ which consists of the triples $(\sigma_1(r), \sigma_2(c), \sigma_3(s))$ for each triple $(r, c, s)$ of $L$. We say that a Latin square is \emph{isotopic} to $L$ if it is $L(\sigma_1, \sigma_2, \sigma_3)$ for some $\{\sigma_1, \sigma_2, \sigma_3\} \subseteq \text{Sym}(S)$. We say that a Latin square is \emph{isomorphic} to $L$ if it is $L(\sigma, \sigma, \sigma)$ for a permutation $\sigma \in \text{Sym}(S)$. Isotopy preserves the lengths of row cycles of a Latin square. We label each conjugate of $L$ by a $1$-line permutation which gives the order of the coordinates of the conjugate, relative to the order of the coordinates of the original square. So the $(1, 2, 3)$-conjugate of $L$ is itself and the $(2, 1, 3)$-conjugate is the matrix transpose of $L$. If $L$ is equal to its $(1, 3, 2)$-conjugate then $L$ is called \emph{involutory}. If $L_{i, i} = i$ for all $i \in S$ then $L$ is called \emph{idempotent}.
	
	We now describe the method mentioned in \sref{s:intro} which can be used to construct an ordered $1$-factorisation of $K_{n, n}$ from an $n \times n$ Latin square. Let $L$ be a Latin square with symbol set $S$ of size $n$. Label the vertices of $K_{n, n}$ by $S \times \{c, s\}$ where $(x_1, y_1)$ is adjacent to $(x_2, y_2)$ if and only if $y_1 \neq y_2$. For each $i \in S$ we construct a $1$-factor $e_i$ of $K_{n, n}$ from row $i$ of $L$ as follows. For each $j \in S$ add the edge $\{(j, c), (k, s)\}$ to $e_i$ where $L_{i, j} = k$. Then the set $\mathcal{E}(L) = \{e_i : i \in S\}$ is an ordered $1$-factorisation of $K_{n, n}$, where the order on the $1$-factors comes from the order of the rows of $L$. Furthermore, if the row permutation $r_{i, j}$ of $L$ contains a cycle of length $\ell$ then the subgraph of $K_{n, n}$ induced by the $1$-factors $e_i$ and $e_j$ contains a cycle of length $2\ell$. In particular, $L$ is row-Hamiltonian if and only if $\E(L)$ is perfect. This construction is reversible, and so every ordered $1$-factorisation of $K_{n, n}$ can be written as $\mathcal{E}(L')$ for some Latin square $L'$ of order $n$. For a more detailed description of this construction see~\cite{MR2130738}. The infinite families of perfect $1$-factorisations of complete bipartite graphs from~\cite{hon, MR1899629} were constructed using row-Hamiltonian Latin squares. In fact, the family of row-Hamiltonian Latin squares constructed in~\cite{hon} is the family of quadratic Latin squares $\mathcal{L}[-1, 2]$ of prime order $p$ with $p \equiv 1 \bmod 8$ or $p \equiv 3 \bmod 8$.
	
	As mentioned in \sref{s:intro}, if a Latin square $L$ of order $n$ satisfies some symmetry conditions then we can construct a $1$-factorisation of $K_{n+1}$ from $L$. Those symmetry conditions are that $L$ must be idempotent and involutory. We will briefly outline the construction now. For a more detailed description see~\cite{MR2130738}. Let $L$ be an idempotent, involutory Latin square with symbol set $S$ of size $n$. Let $v$ be any symbol which is not in $S$. Label the vertices of $K_{n+1}$ by $S \cup \{v\}$. For each $i \in S$ we construct a $1$-factor $f_i$ of $K_{n+1}$ from row $i$ of $L$ as follows. Add the edge $\{i, v\}$ to $f_i$, and for each $j \in S \setminus \{i\}$ add the edge $\{j, k\}$ to $f_i$ where $L_{i, j} = k$. The $1$-factor $f_i$ is well defined because $L$ is idempotent and involutory. Then $\mathcal{F}(L) = \{f_i : i \in S\}$ is an ordered $1$-factorisation of $K_{n+1}$, where the order on the $1$-factors comes from the order of the rows of $L$. Before describing the relationship between the row cycles of $L$ and the cycles in $\mathcal{F}(L)$ we will need the following lemma.
	
	\begin{lem}\label{l:lsymcyc}
		Let $L$ be an idempotent, involutory Latin square with symbol set $S$. Let $i$ and $j$ be distinct elements of $S$ and let $r = r_{i, j}$. The cycle of $r$ containing $i$ can be written as $(i, x_1, x_2, \ldots, x_k, j, y_1, y_2, \ldots, y_{k-1})$. If $r$ contains the cycle $(x_0, x_1, \ldots, x_k)$ then it also contains the cycle $(L_{i, x_k}, L_{i, x_{k-1}}, \ldots, L_{i, x_0})$. Furthermore these cycles coincide if and only if $x_\ell = i$ for some $\ell \in \{0, 1, \ldots, k\}$.
	\end{lem}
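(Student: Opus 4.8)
The plan is to recast everything in terms of the involutions carried by the rows of $L$. For each $m \in S$ let $\pi_m \in \mathrm{Sym}(S)$ be the permutation $x \mapsto L_{m,x}$. Since $L$ is involutory, each triple $(m,c,s)$ of $L$ satisfies $(m,s,c) \in L$, so $\pi_m(c) = s \iff \pi_m(s) = c$; that is, $\pi_m = \pi_m^{-1}$ is an involution. Since $L$ is idempotent, $\pi_m(m) = m$. I would first record the key observation that $m$ is the \emph{only} fixed point of $\pi_m$: if $\pi_m(s) = s$ then $L_{m,s} = s = L_{s,s}$, so the symbol $s$ occurs in column $s$ in both row $m$ and row $s$, whence $m = s$ because $L$ is Latin. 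From the definition $r(L_{i,k}) = L_{j,k}$ we have $r \circ \pi_i = \pi_j$, so $r = \pi_j \pi_i^{-1} = \pi_j \pi_i$, and using $\pi_i^2 = \pi_j^2 = \mathrm{id}$ a direct computation gives the conjugation relations $\pi_i r \pi_i = \pi_j r \pi_j = r^{-1}$ together with the identity $\pi_j = r \pi_i$.

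For the second assertion I would use $\pi_i r \pi_i = r^{-1}$, equivalently $\pi_i r = r^{-1}\pi_i$. If $(x_0, x_1, \ldots, x_k)$ is a cycle of $r$, then applying $\pi_i$ to $r(x_\ell) = x_{\ell+1}$ and rewriting yields $r(\pi_i(x_{\ell+1})) = \pi_i(x_\ell)$; hence $(\pi_i(x_k), \pi_i(x_{k-1}), \ldots, \pi_i(x_0))$ is a cycle of $r$, which is exactly $(L_{i,x_k}, \ldots, L_{i,x_0})$ since $\pi_i(x) = L_{i,x}$. Two cycles of one permutation coincide precisely when they meet, so this image cycle coincides with the original cycle $D = \{x_0, \ldots, x_k\}$ if and only if $\pi_i(D) = D$ as a set. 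This reduces the third assertion to proving that $\pi_i(D) = D$ if and only if $i \in D$.

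The forward direction is immediate: if $i \in D$ then $\pi_i(i) = i$ lies in both $D$ and $\pi_i(D)$, so these two cycles of $r$ meet and hence coincide. For the converse I would exploit that $\pi_i(D) = D$ forces $\pi_j(D) = D$ as well, because $\pi_j = r\pi_i$ and $r(D) = D$. Coordinatising $D$ by $\mathbb{Z}/\ell\mathbb{Z}$ so that $r$ is $x \mapsto x+1$, the relation $\pi_i r \pi_i = r^{-1}$ forces $\pi_i$ to act as a reflection $x \mapsto a - x$, and then $\pi_j = r\pi_i$ acts as $x \mapsto (a+1) - x$. The number of fixed points of such a reflection is the number of solutions of $2x \equiv a \pmod{\ell}$ (respectively $2x \equiv a+1$). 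When $\ell$ is even exactly one of $a, a+1$ is even, so exactly one of $\pi_i, \pi_j$ has two fixed points on $D$; this contradicts the fact that $\pi_i$ and $\pi_j$ each have a single fixed point overall. Hence every $\pi_i$-invariant cycle has odd length, its reflection $\pi_i$ then has a unique fixed point on it, and by the uniqueness observation that fixed point is $i$, giving $i \in D$. This proves the third assertion.

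Finally, for the first assertion I would apply the above to the cycle $C$ of $r$ containing $i$. Since $\pi_i(i) = i \in C$, the cycle $C$ is $\pi_i$-invariant, hence of odd length, say $2k+1$, and also $\pi_j$-invariant. Working in the coordinates above with $i$ at position $0$, so that $\pi_i$ is $x \mapsto -x$ and $\pi_j$ is $x \mapsto 1 - x$, the unique fixed point of $\pi_j$ on $C$ sits at the position $x$ with $2x \equiv 1 \pmod{2k+1}$, namely position $k+1$; since $j$ is the unique fixed point of $\pi_j$, it occupies that position. Writing $x_\ell = r^\ell(i)$ and $y_\ell = x_{k+1+\ell}$ then expresses $C$ in the stated form $(i, x_1, \ldots, x_k, j, y_1, \ldots, y_{k-1})$. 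I expect the main obstacle to be the converse of the third assertion: ruling out even-length $\pi_i$-invariant cycles, which is precisely where the single-fixed-point property of each row involution (a consequence of idempotency) does the essential work.
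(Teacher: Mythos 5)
Your proof is correct, and while it establishes the same three assertions in the same logical order (reversal of cycles first, then the fixed-point analysis), it is packaged quite differently from the paper's argument. The paper works entirely element-wise: it verifies $r(L_{i,x_\ell}) = L_{i,x_{\ell-1}}$ by direct substitution, and handles the ``coincide iff $i \in X$'' claim by index-chasing through the palindrome identity $x_m = L_{i, x_{\ell-m}}$, splitting on the parity of $\ell$ to exhibit either $i$ or $j$ as a middle element of the cycle. You instead isolate the structural facts up front --- each row map $\pi_m : x \mapsto L_{m,x}$ is an involution with unique fixed point $m$, and $r = \pi_j\pi_i$ with $\pi_i r \pi_i = \pi_j r \pi_j = r^{-1}$ --- and then read everything off the dihedral action: invariant cycles are coordinatised by $\mathbb{Z}/\ell\mathbb{Z}$, $\pi_i$ and $\pi_j$ restrict to reflections, and counting their fixed points rules out even $\ell$ (one of the two reflections would have two fixed points, contradicting the uniqueness forced by idempotency) and locates $i$ at position $0$ and $j$ at position $k+1$. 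The two proofs use the same two ingredients (involutority gives the reversal, idempotency gives the unique fixed point), but your reorganisation makes the ``why'' of the odd length and of the placement of $j$ more transparent, and it derives the first assertion cleanly as a corollary of the machinery rather than interleaving it with the proof of the third, at the cost of introducing the coordinatisation. One cosmetic remark: you should note (as the paper does earlier) that $r$ is fixed-point-free, so the cycle through $i$ has length at least $2$ and the decomposition $(i, x_1, \ldots, x_k, j, y_1, \ldots, y_{k-1})$ is non-degenerate; this follows immediately from the Latin property.
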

	\begin{proof}
		Throughout the proof let $X = (x_0, x_1, \ldots, x_k)$ be a cycle of $r$. We first prove that $r(L_{i, x_\ell}) = L_{i, x_{\ell-1}}$ for any $\ell \in \{0, 1, \ldots, k\}$ (where we take $\ell-1$ modulo $k+1$). Write $x_{\ell-1} = L_{i, a}$ for some $a \in S$. Then $x_\ell = r(x_{\ell-1}) = r(L_{i, a}) = L_{j, a}$. So $r(L_{i, x_\ell}) = L_{j, x_\ell} = a = L_{i, x_{\ell-1}}$ because $L$ is involutory. Therefore $r$ contains the cycle $(L_{i, x_k}, L_{i, x_{k-1}}, \ldots, L_{i, x_0})$.
		
		Suppose, for the moment, that $X$ contains $i$. Since $L$ is idempotent we know that $X$ and $(L_{i, x_k}, L_{i, x_{k-1}}, \ldots, L_{i, x_0})$ must coincide. Without loss of generality assume that $x_0 = i$. We will show that $L_{i, x_{k/2}} = j$. If $k$ is odd then we must have $x_{(k+1)/2} = L_{i, x_{(k+1)/2}}$, which is impossible since $L$ is idempotent. Therefore $k$ is even and $X$ can be written as
		\[
		(i, x_1, x_2, \ldots, x_{k/2}, L_{i, x_{k/2}}, \ldots, L_{i, x_2}, L_{i, x_1}).
		\]
		In particular we must have $r(x_{k/2}) = L_{i, x_{k/2}}$. Write $x_{k/2} = L_{i, b}$ for some $b \in S$. Then because $L$ is involutory we have that $b = L_{i, x_{k/2}} = r(x_{k/2}) = L_{j, b}$. But $L$ is idempotent, hence we must have $b = j$ and therefore $L_{i, x_{k/2}} = j$.
		
		Now suppose that $X$ is equal to the cycle $(L_{i, x_k}, L_{i, x_{k-1}}, \ldots, L_{i, x_0})$. We will show that $X$ must contain $i$. We can write $x_0 = L_{i, x_\ell}$ for some $\ell \in \{0, 1, \ldots, k\}$. Then we also have $x_m = L_{i, x_{\ell-m}}$ for each $m \in \{0, 1, \ldots, k\}$ where $\ell-m$ is taken modulo $k+1$. If $\ell$ is even then taking $m=\ell/2$ we see that $x_m = L_{i, x_m}$ which implies that $x_m=i$. If $m$ is odd then taking $m=(\ell+1)/2$ we see that $x_m = r(x_{m-1}) = r(L_{i, x_m}) = L_{j, x_m}$ which implies that $x_m=j$. Either way, $X$ must contain $i$.
	\end{proof}
	
	We can now describe the relationship between the row cycles of $L$ and the cycles in $\mathcal{F}(L)$. Let $r = r_{i, j}$ be a row permutation of $L$ and let $(i, x_1, x_2, \ldots, x_k, j, y_1, y_2, \ldots, y_{k-1})$ be the cycle of $r$ containing $i$ and $j$. Then $\mathcal{F}(L)$ contains the cycle $(v, i, x_1, y_{k-1}, x_2, y_{k-2}, \ldots, x_k, j)$. Let $(y_0, y_1, \ldots, y_k)$ be a cycle of $r$ which does not contain $i$, so that $(L_{i, y_k}, L_{i, y_{k-1}}, \ldots, L_{i, y_0})$ is also a cycle of $r$. Then $\mathcal{F}(L)$ contains the cycle $(y_0, L_{i, y_0}, y_1, L_{i, y_{1}}, \ldots, y_k, L_{i, y_k})$. In particular, $L$ is row-Hamiltonian if and only if $\mathcal{F}(L)$ is perfect.	
	
	\section{Row cycles of quadratic Latin squares}\label{ss:quadcyc}
	
	In this section we develop a method to investigate row cycles of quadratic Latin squares. The following result will be used frequently, and it is one of our primary motivators for studying quadratic Latin squares (see e.g.~\cite{hon}).
	
	\begin{lem}\label{l:quadrowperms}
		Let $q$ be an odd prime power and let $\{a, b\} \subseteq \F_q$ be such that $\{ab, (a-1)(b-1)\} \subseteq \mathcal{R}_q$. 
		\begin{enumerate}[(i)]
			\item If $q \equiv 3 \bmod 4$ then every row permutation of the Latin square $\mathcal{L}[a, b]$ has the same cycle structure as the row permutation $r_{0,1}$ of $\mathcal{L}[a, b]$.
			\item If $q \equiv 1 \bmod 4$ then every row permutation of the Latin square $\mathcal{L}[a, b]$ has the same cycle structure as either the row permutation $r_{0,1}$ of $\mathcal{L}[a, b]$ or the row permutation $r_{0,1}$ of $\mathcal{L}[b, a]$.
		\end{enumerate}
	\end{lem}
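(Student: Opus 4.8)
The plan is to exploit the affine symmetries of $\mathcal{L}[a,b]$ coming from the field structure, reducing every row permutation to one of two canonical row permutations. The engine driving everything is a general conjugacy fact about row permutations under isomorphism, which I would record first: if $\sigma$ is an isomorphism from a Latin square $L$ to a Latin square $M$ (so $M = L(\sigma,\sigma,\sigma)$), then a short computation gives $r^M_{\sigma(i),\sigma(j)} = \sigma\, r^L_{i,j}\, \sigma^{-1}$, so these row permutations are conjugate and hence share a cycle structure. All the remaining steps are applications of this to well-chosen $\sigma$.

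Next I would identify the symmetries. For each $c \in \F_q$ the translation $\phi_c\colon x \mapsto x+c$ leaves every difference $j-i$, and hence its quadratic character, unchanged, so $(\mathcal{L}[a,b])_{i+c,\,j+c} = (\mathcal{L}[a,b])_{i,j}+c$ and $\phi_c$ is an automorphism. Conjugating by $\phi_{-i}$ shows $r_{i,j}$ and $r_{0,\,j-i}$ have the same cycle structure, so it suffices to analyse $r_{0,d}$ for $d \in \F_q^*$. For a residue $s \in \R_q$ the scaling $\psi_s\colon x \mapsto sx$ also preserves quadratic character and satisfies $(\mathcal{L}[a,b])_{si,sj} = s\,(\mathcal{L}[a,b])_{i,j}$, so it is an automorphism too; conjugating by it identifies the cycle structures of $r_{0,d}$ and $r_{0,sd}$. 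Since $sd$ ranges over $d\,\R_q$, this collapses all $r_{0,d}$ with $d \in \R_q$ onto $r_{0,1}$ and all $r_{0,d}$ with $d \in \N_q$ onto a single non-residue class. At this point every row permutation has been reduced to one of exactly two cycle structures.

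The remaining work is to name the non-residue class, and this is where part (ii) is pinned down. I would check that for a non-residue $s \in \N_q$ the map $\psi_s$ swaps residue and non-residue differences, so it is no longer an automorphism but an isomorphism from $\mathcal{L}[a,b]$ onto $\mathcal{L}[b,a]$ (verifying the diagonal entries as well, and noting $\mathcal{L}[b,a]$ is itself a valid quadratic Latin square since the hypothesis $\{ab,(a-1)(b-1)\}\subseteq \R_q$ is symmetric in $a$ and $b$). Conjugating by $\psi_s$ then shows the non-residue class of $\mathcal{L}[a,b]$ equals the residue class of $\mathcal{L}[b,a]$, which by the previous paragraph is the cycle structure of $r_{0,1}$ of $\mathcal{L}[b,a]$. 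Combined with the reduction above, every row permutation of $\mathcal{L}[a,b]$ thus matches $r_{0,1}$ of $\mathcal{L}[a,b]$ or $r_{0,1}$ of $\mathcal{L}[b,a]$, giving part (ii).

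Finally, for part (i) I would merge the two classes when $q \equiv 3 \bmod 4$ using one more symmetry. Since $r_{0,d}$ and $r_{d,0}$ are mutually inverse they share a cycle structure, and translating by $\phi_{-d}$ gives $r_{d,0}$ the cycle structure of $r_{0,-d}$; hence $r_{0,d}$ and $r_{0,-d}$ always agree. When $q \equiv 3 \bmod 4$ we have $-1 \in \N_q$, so negation interchanges the residue and non-residue classes and forces them to coincide, whence every row permutation reduces to $r_{0,1}$ of $\mathcal{L}[a,b]$; when $q \equiv 1 \bmod 4$ negation fixes each class and no collapse occurs, consistent with part (ii). I expect the main obstacle to be purely bookkeeping, namely verifying carefully that $\psi_s$ for $s \in \N_q$ lands in $\mathcal{L}[b,a]$ (diagonal included) rather than some other square, since this identification is the one place where the two parts genuinely diverge; every other step is a routine application of the conjugacy relation.
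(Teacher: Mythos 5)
Your proposal is correct, and it is the standard symmetry argument for this lemma: the paper itself states the result without proof, citing an earlier reference, and that reference's argument is exactly the one you give (translations and residue scalings are automorphisms of $\mathcal{L}[a,b]$, a non-residue scaling is an isomorphism onto $\mathcal{L}[b,a]$, and when $q\equiv 3\bmod 4$ the inverse-plus-translation trick together with $-1\in\N_q$ merges the two classes). All the individual verifications you flag as bookkeeping do go through, including the diagonal entries under $\psi_s$.
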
 
	
	Therefore, to investigate the row cycles of quadratic Latin squares it suffices to consider only the row permutations mapping row $0$ to row $1$.
	
	Throughout this section let $q$ be an odd prime power and let $c \in \{2, 3, \ldots, q\}$. We call a pair $(a, b) \in \F_q^2$ \emph{valid} if $\{ab, (a-1)(b-1)\} \subseteq \mathcal{R}_q$. It is known~\cite{MR1222645} that the number of valid pairs in $\F_q^2$ is $(q-3)(q-5)/4+q-2$. Denote the row permutation $r_{0, 1}$ of a quadratic Latin square $\mathcal{L}[a, b]$ by $\alpha[a, b]$ and define the set
	\[
	\G = \left\{\alpha[a, b] : (a, b) \in \F_q^2 \text{ is valid}\right\}.
	\]
	For a valid pair $(a, b) \in \F_q^2$ define the permutation $\varphi[a, b]$ by
	\[
	\varphi[a, b](x) = \begin{cases}
		0 & \text{if } x = 0, \\
		ax & \text{if } x \in \R_q, \\
		bx & \text{if } x \in \N_q.
	\end{cases}
	\]
	Then $(\mathcal{L}[a, b])_{i, j} = i + \varphi[a, b](j-i)$ for all $\{i, j\} \subseteq \F_q$. Let $\alpha = \alpha[a, b]$ and let $\varphi = \varphi[a, b]$. Then $\alpha$ is defined by
	\[
	\alpha(j) = \varphi(\varphi^{-1}(j)-1)+1.
	\]
	A straightforward computation shows that $\varphi^{-1} = \varphi[a^{-1}, b^{-1}]$ if $a \in \R_q$ and $\varphi^{-1} = \varphi[b^{-1}, a^{-1}]$ if $a \in \N_q$.
	
	We now introduce some tools which can be used to investigate the cycles of a permutation $\alpha \in \G$.
	We will call a cycle of length $k$ in a permutation a $k$-cycle. For a sequence $z$, we denote the $i$-th element of $z$ by $z_i$, starting from $z_0$. For a cycle $\beta$ of $\alpha$ and element $j$ in the cycle $\beta$ we will write $j \in \beta$. Let $\eta : \F_q^* \to \mathbb{C}$ denote the quadratic character, and extend $\eta$ to $\F_q$ by defining $\eta(0) = 0$. 
	
	\begin{defin}\label{d:asat}
		Let $z \in \{-1, 0, 1\}^{2c}$ and $\alpha \in \G$. Suppose that there is a $c$-cycle $\beta$ of $\alpha$ and element $j \in \beta$ such that $z_{2k} = \eta(\alpha^k(j))$ and $z_{2k+1} = \eta(\varphi^{-1}(\alpha^k(j))-1)$ for each $k \in \{0, 1, 2, \ldots, c-1\}$. Then we say that $\alpha$ \emph{satisfies} $z$ with cycle $\beta$ and element $j \in \beta$.
	\end{defin}
	
	We will sometimes simply say that $\alpha$ satisfies $z$ or that $\alpha$ satisfies $z$ with element $j \in \F_q$. Let $\alpha \in \G$. Suppose that $\alpha$ satisfies a sequence $z \in \{-1, 0, 1\}^{2c}$ with $z_k = 0$ for some $k \in \{0, 1, 2, \ldots, 2c-1\}$. Then either $\alpha^m(j) = 0$ or $\varphi^{-1}(\alpha^m(j))-1=0$ for some $m \in \{0, 1, 2, \ldots, c-1\}$. The first case implies that $0 \in \beta$ and the second implies that $\beta$ contains $\alpha^m(j) = \varphi(1) = a$. We will let the cycles of $\alpha$ containing $0$ and $a$ be denoted by $\alpha_0$ and $\alpha_a$, respectively. We will deal with these cycles separately, hence we will mostly be concerned with sequences $z \in \{-1, 1\}^{2c}$. 
	For a positive integer $i$ and sequence $z \in \{-1, 1\}^{2c}$ let $z^i$ denote the sequence obtained by cyclically rotating $z$ by $i$ positions. That is, $z^i_{k} = z_{k+i \bmod 2c}$. We note the following simple observation.
	
	\begin{lem}\label{l:cycshift}
		Let $\alpha \in \G$. If $\alpha$ satisfies $z \in \{-1, 1\}^{2c}$ then $\alpha$ satisfies $z^{2i}$ for all $i \in \{0, 1, 2, \ldots, c-1\}$.
	\end{lem}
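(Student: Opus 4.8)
The plan is to observe that rotating the sequence $z$ by an even amount $2i$ simply corresponds to reading off the same data along the cycle $\beta$, but starting from the element $\alpha^i(j)$ instead of $j$. So suppose $\alpha$ satisfies $z \in \{-1, 1\}^{2c}$ with cycle $\beta$ and element $j \in \beta$, meaning $\beta$ is a $c$-cycle of $\alpha$ and
\[
z_{2k} = \eta(\alpha^k(j)), \qquad z_{2k+1} = \eta(\varphi^{-1}(\alpha^k(j)) - 1)
\]
for each $k \in \{0, 1, \ldots, c-1\}$. Since $\beta$ is a $c$-cycle containing $j$ we have $\alpha^c(j) = j$, so $\alpha^m(j)$ depends only on $m \bmod c$; this periodicity is what I would exploit throughout.

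First I would fix $i \in \{0, 1, \ldots, c-1\}$ and set $j' := \alpha^i(j)$, which again lies in $\beta$. I claim that $\alpha$ satisfies $z^{2i}$ with the same cycle $\beta$ and element $j'$, which immediately gives the lemma. To verify the two defining conditions, note that for each $k \in \{0, \ldots, c-1\}$ one has $\alpha^k(j') = \alpha^{k+i}(j) = \alpha^{(k+i) \bmod c}(j)$, using the $c$-periodicity. Hence $\eta(\alpha^k(j')) = z_{2((k+i) \bmod c)}$ and $\eta(\varphi^{-1}(\alpha^k(j')) - 1) = z_{2((k+i) \bmod c) + 1}$, by the defining equalities applied at the reduced index $(k+i) \bmod c \in \{0, \ldots, c-1\}$.

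The only point requiring care is to match this reindexing in base $c$ (coming from iterating $\alpha$) with the cyclic rotation in base $2c$ (the definition of $z^{2i}$). Writing $k+i = sc + r$ with $0 \leq r < c$, one checks the elementary identities
\[
2\big((k+i) \bmod c\big) = (2k + 2i) \bmod 2c, \qquad 2\big((k+i) \bmod c\big) + 1 = (2k + 2i + 1) \bmod 2c,
\]
so that $z_{2((k+i) \bmod c)} = z^{2i}_{2k}$ and $z_{2((k+i) \bmod c) + 1} = z^{2i}_{2k+1}$. Combining with the previous paragraph yields $z^{2i}_{2k} = \eta(\alpha^k(j'))$ and $z^{2i}_{2k+1} = \eta(\varphi^{-1}(\alpha^k(j')) - 1)$ for every $k$, which is exactly the statement that $\alpha$ satisfies $z^{2i}$ with cycle $\beta$ and element $j'$. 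I do not expect any genuine obstacle here: the result is essentially a bookkeeping identity, and the restriction $z \in \{-1, 1\}^{2c}$ (as opposed to allowing zero entries) plays no role in the argument beyond matching the surrounding conventions.
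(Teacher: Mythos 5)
Your proof is correct and takes exactly the same approach as the paper: the paper's proof simply asserts that $\alpha$ satisfies $z^{2i}$ with cycle $\beta$ and element $\alpha^i(j)$, leaving the verification to the reader, and your argument carries out that verification (including the index bookkeeping between base $c$ and base $2c$) in full.
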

	\begin{proof}
		Suppose that $\alpha$ satisfies $z$ with cycle $\beta$ and element $j \in \beta$. It is simple to verify, using \dref{d:asat}, that $\alpha$ satisfies $z^{2i}$ with cycle $\beta$ and element $\alpha^i(j) \in \beta$.
	\end{proof}	
	
	We will need the following notation to deal with sequences $z \in \{-1, 1\}^{2c}$.
	
	\begin{defin}
		Let $\{i, j\} \subseteq \{0, 1, 2, \ldots, 2c-1\}$ with $i \leq j$. For a sequence $z \in \{-1, 1\}^{2c}$ we define
		\[
		\begin{aligned}
			& e^+(i, j) = |\{i \leq k \leq j : k \text{ is even and } z_k = 1\}|, \\
			& o^+(i, j) = |\{i \leq k \leq j : k \text{ is odd and } z_k = 1\}|, \\
			& e^-(i, j) = |\{i \leq k \leq j : k \text{ is even and } z_k = -1\}|, \\
			& o^-(i, j) = |\{i \leq k \leq j : k \text{ is odd and } z_k = -1\}|.
		\end{aligned}
		\]
		Also define
		\[
		\begin{aligned}
			&u^+(i, j) = o^+(i, j) - e^+(i, j), \\
			&u^-(i, j) = o^-(i, j) - e^-(i, j), \\
			&v^+(i, j) = o^+(i, j) - e^-(i, j), \\
			&v^-(i, j) = o^-(i, j) - e^+(i, j).
		\end{aligned}
		\]
	\end{defin}
	
	For $i > j$ we define $u^+(i, j) = u^-(i, j) = v^+(i, j) = v^-(i, j) = 0$. We note that the values of $u^+$, $u^-$, $v^+$ and $v^-$ implicitly depend on the choice of sequence $z \in \{-1, 1\}^{2c}$. We now prove a result concerning how permutations in $\G$ act on elements of $\F_q$. We will need to consider the cases $a \in \mathcal{R}_q$ and $a \in \mathcal{N}_q$ separately. We will repeatedly use the simple property that $u^+(i, j) + u^+(j+1, k) = u^+(i, k)$ for any $i \leq j \leq k$. The same holds when replacing $u^+$ by $u^-$, $v^+$ or $v^-$.
	
	\begin{lem}\label{l:junalphr}
		Let $\alpha = \alpha[a, b] \in \Gamma$ with $a \in \mathcal{R}_q$. Let $\varphi = \varphi[a, b]$ and $z \in \{-1, 1\}^{2c}$. Suppose that $\alpha$ satisfies $z$ with element $j \in \F_q$. Then for all $m \in \{0, 1, 2, \ldots, c\}$,
		\begin{equation}\label{e:junalph}
			\alpha^m(j) = a^{u^+(0, 2m-1)}b^{u^-(0, 2m-1)}j + \sum_{k=1}^{2m} (-1)^{k}a^{u^+(k, 2m-1)}b^{u^-(k, 2m-1)}
		\end{equation}
		and
		\begin{equation}\label{e:junalphvar}
			\varphi^{-1}(\alpha^m(j))-1 = a^{u^+(0, 2m)}b^{u^-(0, 2m)}j + \sum_{k=1}^{2m+1} (-1)^{k}a^{u^+(k, 2m)}b^{u^-(k, 2m)}.
		\end{equation}
	\end{lem}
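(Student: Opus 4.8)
The plan is to prove both identities simultaneously by induction on $m$, after extracting from the hypothesis a clean two-step linear recursion for the iterates $\alpha^m(j)$. First I would record that since $a \in \R_q$ and $ab \in \R_q$ we also have $b \in \R_q$, so both branches of $\varphi = \varphi[a, b]$ multiply by a square; hence $\varphi$, and likewise $\varphi^{-1} = \varphi[a^{-1}, b^{-1}]$, preserve the quadratic character. Writing $c_k = a$ when $z_k = 1$ and $c_k = b$ when $z_k = -1$, the assumption that $\alpha$ satisfies $z$ then says precisely that $\varphi^{-1}(\alpha^m(j)) = c_{2m}^{-1}\alpha^m(j)$ (the value $z_{2m} = \eta(\alpha^m(j))$ selects the branch of $\varphi^{-1}$) and that $\alpha^{m+1}(j) = \varphi(\varphi^{-1}(\alpha^m(j)) - 1) + 1 = c_{2m+1}(\varphi^{-1}(\alpha^m(j)) - 1) + 1$ (the value $z_{2m+1}$ selects the branch of $\varphi$). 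Because $z \in \{-1, 1\}^{2c}$ has no zero entries, every branch is unambiguously determined and no iterate or intermediate value vanishes, so both relations hold for all $m$ in range.

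With this recursion in hand I would run the induction in half-steps. The base case $m = 0$ of \eqref{e:junalph} is $\alpha^0(j) = j$, which matches the right-hand side since the ranges $u^\pm(0, -1)$ are empty and the sum $\sum_{k=1}^{0}$ is empty. The passage from \eqref{e:junalph} at $m$ to \eqref{e:junalphvar} at $m$ is obtained by multiplying through by $c_{2m}^{-1}$ and subtracting $1$; the passage from \eqref{e:junalphvar} at $m$ to \eqref{e:junalph} at $m+1$ is obtained by multiplying through by $c_{2m+1}$ and adding $1$. The single algebraic fact driving both passages is that multiplying a monomial $a^{u^+(k, \ell)}b^{u^-(k, \ell)}$ by $c_{\ell+1}^{\pm 1}$ advances the upper endpoint from $\ell$ to $\ell+1$: at an even position the factor $c_{2m}^{-1}$ equals $a^{-1}$ or $b^{-1}$ according to the sign of $z_{2m}$, matching the single-index values $u^+(2m, 2m)$ and $u^-(2m, 2m)$ (one of which is $-1$, the other $0$), while at an odd position the factor $c_{2m+1}$ equals $a$ or $b$, matching $u^+(2m+1, 2m+1)$ and $u^-(2m+1, 2m+1)$ (one of which is $+1$, the other $0$). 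This is exactly the additivity $u^+(i, \ell) + u^+(\ell+1, k) = u^+(i, k)$ and its analogue for $u^-$, applied one index at a time, simultaneously to the coefficient of $j$ and to each term of the sum.

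It then remains to account for the two constants. After Step A the subtracted $1$ becomes the $k = 2m+1$ term of \eqref{e:junalphvar}: its range is empty, so $u^\pm(2m+1, 2m) = 0$, and its sign $(-1)^{2m+1} = -1$ supplies exactly $-1$. After Step B the added $1$ becomes the $k = 2m+2$ term of \eqref{e:junalph} at $m+1$: again the range is empty and $(-1)^{2m+2} = +1$. The main obstacle will therefore be nothing deep but rather the index-and-parity bookkeeping — that even positions shift exponents down and odd positions shift them up, and that the empty-range terms land at exactly the indices $2m+1$ and $2m+2$ needed to absorb the $\mp 1$; this is where an off-by-one slip would hide. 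The one genuinely delicate point is the top index $m = c$ of \eqref{e:junalphvar}, where the entry $z_{2c}$ is read through the cycle closing up, $\alpha^c(j) = j$, so that $z_{2c} = z_0$ and the counting functions are interpreted with indices taken modulo $2c$, consistent with \lref{l:cycshift}.
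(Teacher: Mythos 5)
Your proposal is correct and is essentially the paper's own proof: an induction on $m$ alternating between \eqref{e:junalph} and \eqref{e:junalphvar}, where your factors $c_{2m}^{-1}$ and $c_{2m+1}$ are exactly the paper's $a^{-e^+(2m,2m)}b^{-e^-(2m,2m)}$ and $a^{o^+(2m+1,2m+1)}b^{o^-(2m+1,2m+1)}$, and the $\mp 1$ constants are absorbed as the empty-range terms $k = 2m+1$ and $k = 2m+2$ just as in the paper. Your remark about reading $z_{2c}$ cyclically at the top index $m = c$ is a fair point that the paper leaves implicit, but it does not change the argument.
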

	\begin{proof}
		We will prove the claim by induction on $m$. If $m=0$ then \eref{e:junalph} simply states that $\alpha^m(j) = j$, which is true. Since $\alpha$ satisfies $z$ we know that $\eta(j) = z_0$. Hence $\varphi^{-1}(j)-1 = a^{-e^+(0, 0)}b^{-e^-(0, 0)}j-1$, which agrees with \eref{e:junalphvar}. Now suppose that~\eref{e:junalph} and \eref{e:junalphvar} hold for some $m \geq 0$. Then
		\[
		\begin{aligned}
			\alpha^{m+1}(j) &= \varphi(\varphi^{-1}(\alpha^{m}(j))-1)+1 \\
			&= a^{o^+(2m+1, 2m+1)}b^{o^-(2m+1, 2m+1)}(\varphi^{-1}(\alpha^{m}(j))-1)+1 \\
			&= a^{u^+(2m+1, 2m+1)}b^{u^-(2m+1, 2m+1)}\left(a^{u^+(0, 2m)}b^{u^-(0, 2m)}j + \sum_{k=1}^{2m+1} (-1)^{k}a^{u^+(k, 2m)}b^{u^-(k, 2m)}\right)+1 \\
			&= a^{u^+(0, 2m+1)}b^{u^-(0, 2m+1)}j + \left(\sum_{k=1}^{2m+1} (-1)^{k}a^{u^+(k, 2m+1)}b^{u^-(k, 2m+1)}\right) + 1 \\
			&= a^{u^+(0, 2m+1)}b^{u^-(0, 2m+1)}j + \sum_{k=1}^{2m+2} (-1)^{k}a^{u^+(k, 2m+1)}b^{u^-(k, 2m+1)},
		\end{aligned}
		\]
		which agrees with~\eref{e:junalph}. Using this we have that
		\[
		\begin{aligned}
			\varphi^{-1}(\alpha^{m+1}(j))-1 &= a^{-e^+(2m+2, 2m+2)}b^{-e^-(2m+2, 2m+2)}\alpha^{m+1}(j)-1 \\
			&= a^{u^+(2m+2, 2m+2)}b^{u^-(2m+2, 2m+2)}\bigg(a^{u^+(0, 2m+1)}b^{u^-(0, 2m+1)}j + \\
			&\hspace{5mm} \sum_{k=1}^{2m+2} (-1)^{k}a^{u^+(k, 2m+1)}b^{u^-(k, 2m+1)}\bigg)-1 \\
			&= a^{u^+(0, 2m+2)}b^{u^-(0, 2m+2)}j + \left(\sum_{k=1}^{2m+2} (-1)^{k}a^{u^+(k, 2m+2)}b^{u^-(k, 2m+2)}\right) - 1 \\
			&= a^{u^+(0, 2m+2)}b^{u^-(0, 2m+2)}j + \sum_{k=1}^{2m+3} (-1)^{k}a^{u^+(k, 2m+2)}b^{u^-(k, 2m+2)},
		\end{aligned}
		\]
		which agrees with~\eref{e:junalphvar} and so the lemma follows by induction.
	\end{proof}
	
	Using analogous arguments we can prove the following result.
	
	\begin{lem}\label{l:junalphnr}
		Let $\alpha = \alpha[a, b] \in \Gamma$ with $a \in \mathcal{N}_q$. Let $\varphi = \varphi[a, b]$ and $z \in \{-1, 1\}^{2c}$. Suppose that $\alpha$ satisfies $z$ with element $j \in \F_q$. Then for all $m \in \{0, 1, 2, \ldots, c\}$,
		\begin{equation*}
			\alpha^m(j) = a^{v^+(0, 2m-1)}b^{v^-(0, 2m-1)}j + \sum_{k=1}^{2m} (-1)^{k}a^{v^+(k, 2m-1)}b^{v^-(k, 2m-1)}
		\end{equation*}
		and
		\[
		\varphi^{-1}(\alpha^m(j))-1 = a^{v^+(0, 2m)}b^{v^-(0, 2m)}j + \sum_{k=1}^{2m+1} (-1)^{k}a^{v^+(k, 2m)}b^{v^-(k, 2m)}.
		\]
	\end{lem}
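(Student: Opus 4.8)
The plan is to follow the proof of \lref{l:junalphr} essentially verbatim, inducting on $m$ via the recursion $\alpha^{m+1}(j) = \varphi(\varphi^{-1}(\alpha^m(j)) - 1) + 1$. The single point at which the hypothesis $a \in \mathcal{N}_q$ intervenes is the form of $\varphi^{-1}$: by the computation recorded just before \dref{d:asat}, we have $\varphi^{-1} = \varphi[b^{-1}, a^{-1}]$ when $a \in \mathcal{N}_q$, in contrast to the $\varphi^{-1} = \varphi[a^{-1}, b^{-1}]$ used in \lref{l:junalphr}. Thus $\varphi^{-1}$ now multiplies a quadratic residue by $b^{-1}$ and a non-residue by $a^{-1}$, whereas the outer map $\varphi = \varphi[a, b]$ is unchanged and still multiplies residues by $a$ and non-residues by $b$.

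For the base case $m = 0$ the two displayed identities reduce exactly as before: $\alpha^0(j) = j$ matches the empty-sum convention, and since $\alpha$ satisfies $z$ we have $\eta(j) = z_0$, so $\varphi^{-1}(j) - 1$ equals $a^{v^+(0, 0)} b^{v^-(0, 0)} j - 1$ once one checks that a residue $j$ contributes the exponent pair $(0, -1)$ and a non-residue $j$ the pair $(-1, 0)$, which is precisely what $v^+(0, 0) = o^+(0, 0) - e^-(0, 0)$ and $v^-(0, 0) = o^-(0, 0) - e^+(0, 0)$ record. The inductive step is then the same two-line calculation as in \lref{l:junalphr}, using the additivity property $v^+(i, j) + v^+(j+1, k) = v^+(i, k)$ (and likewise for $v^-$) to absorb the newly acquired factor into the running exponents, together with the sign conventions $z_{2m} = \eta(\alpha^m(j))$ and $z_{2m+1} = \eta(\varphi^{-1}(\alpha^m(j)) - 1)$ supplied by \dref{d:asat}.

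The only thing to verify with care is that swapping $a^{-1}$ and $b^{-1}$ inside $\varphi^{-1}$ is exactly what converts the exponents $u^{\pm}$ into $v^{\pm}$. Tracking contributions: each odd position carrying $z = 1$ applies $\varphi$ with factor $a$ and each odd position with $z = -1$ applies factor $b$ (unchanged from the residue case), while each even position with $z = 1$ now applies $b^{-1}$ and each even position with $z = -1$ now applies $a^{-1}$. Collecting powers of $a$ therefore yields $o^+ - e^- = v^+$ and collecting powers of $b$ yields $o^- - e^+ = v^-$, in place of the $o^+ - e^+ = u^+$ and $o^- - e^- = u^-$ of the previous lemma. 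Since this is a purely mechanical relabelling of the same induction, I expect no genuine obstacle; the only place demanding attention is keeping the even/odd bookkeeping straight, so that residues at even positions are routed to the $b$-exponent and non-residues to the $a$-exponent.
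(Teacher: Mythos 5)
Your proposal is correct and matches the paper's intent exactly: the paper gives no separate proof of this lemma, stating only that it follows by arguments analogous to those of \lref{l:junalphr}, and you have correctly identified that the sole change is $\varphi^{-1} = \varphi[b^{-1}, a^{-1}]$, which reroutes even-position residues to the $b$-exponent and non-residues to the $a$-exponent, turning $u^{\pm}$ into $v^{\pm}$. The bookkeeping you describe checks out, including the base case.
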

	
	Let $\alpha = \alpha[a, b] \in \Gamma$. Suppose that $a \in \mathcal{R}_q$ and consider \lref{l:junalphr}. Setting $m=c$ in \eref{e:junalph} we see that
	\[
	j = a^{u^+(0, 2c-1)}b^{u^-(0, 2c-1)}j + \sum_{k=1}^{2c} (-1)^{k}a^{u^+(k, 2c-1)}b^{u^-(k, 2c-1)}.
	\]
	In order to investigate this equation we need to distinguish two cases, depending on whether or not $a^{u^+(0, 2c-1)}b^{u^-(0, 2c-1)}$ is equal to $1$. We also need to make the analogous case distinction when $a \in \mathcal{N}_q$. 
	
	Recall that an $m$-th root of unity in $\F_q$ is an element $x$ such that $x^m = 1$. 
	We will say that all non-zero elements of $\F_q$ are $0$-th roots of unity. If $m$ is a negative integer then we will say that $x$ is an $m$-th root of unity if $x^{-1}$ is a $(-m)$-th root of unity. For $\alpha = \alpha[a, b] \in \G$ and $z \in \{-1, 1\}^{2c}$ define
	\[
	t(z, \alpha) = \begin{cases}
		u^+(0, 2c-1) & \text{if } a \in \mathcal{R}_q, \\
		v^+(0, 2c-1) & \text{if } a \in \mathcal{N}_q.
	\end{cases}
	\]
	We note that $e^-(0, 2c-1) = c-e^+(0, 2c-1)$ and $o^-(0, 2c-1) = c-o^+(0, 2c-1)$. Hence
	\[
	u^-(0, 2c-1) = c-e^+(0, 2c-1) - (c-o^+(0, 2c-1)) = o^+(0, 2c-1) - e^+(0, 2c-1) = -u^+(0, 2c-1).
	\]
	So $a^{u^+(0, 2c-1)}b^{u^-(0, 2c-1)} = (ab^{-1})^{t(z, \alpha)}$ if $a \in \R_q$ and $a^{v^+(0, 2c-1)}b^{v^-(0, 2c-1)} = (ab^{-1})^{t(z, \alpha)}$ if $a \in \N_q$.
	We therefore make the following definition.
	
	\begin{defin}
		Let $z \in \{-1, 1\}^{2c}$ and $\alpha = \alpha[a, b] \in \G$. We say that the pair $(z, \alpha)$ is of \emph{Type One} if $ab^{-1}$ is not a $t(z, \alpha)$-th root of unity in $\F_q$. Otherwise we say that $(z, \alpha)$ is of \emph{Type Two}.
	\end{defin}
	
	Fix a permutation $\alpha \in \G$. We will say that a sequence $z \in \{-1, 1\}^{2c}$ is a Type One sequence or Type Two sequence according to whether the pair $(z, \alpha)$ is of Type One or Type Two. Let $\beta \not\in \{\alpha_0, \alpha_a\}$ be a cycle of $\alpha$ and let $j \in \beta$. Using \dref{d:asat} we can associate a sequence $z \in \{-1, 1\}^{2c}$ to the cycle $\beta$ and element $j \in 
	\beta$. Furthermore, by \lref{l:cycshift} we know that changing the element $j$ of $\beta$ simply cyclically rotates the sequence $z$ by an even number of positions. It is clear that $(z, \alpha)$ is of Type One if and only if $(z^{2i}, \alpha)$ is of Type One, for all $i \in \{0, 1, 2, \ldots, c-1\}$. Thus we define $\beta$ to be a Type One cycle if $(z, \alpha)$ is of Type One, and we define $\beta$ to be a Type Two cycle otherwise. 
	
	Our goal in this section is to develop a method to investigate the cycles of a permutation $\alpha \in \G$. To do this we will study Type One cycles, Type Two cycles, and the cycles $\alpha_0$ and $\alpha_a$, separately.
	
	\subsection{Type One cycles}\label{sss:t1}
	
	The goal of this subsection is to prove necessary and sufficient conditions for a permutation in $\G$ to contain a Type One cycle of length $c$. Let $k$ be a positive integer and $\{x, y\} \subseteq \{-1, 1\}^{k}$. We define the \emph{concatenation} of $x$ and $y$, denoted by $x \oplus y$, to be the sequence $(x_0, x_1, \ldots, x_{k-1}, y_0, y_1, \ldots, y_{k-1}) \in \{-1, 1\}^{2k}$.
	
	\begin{defin}
		Let $k$ be a positive integer and $z \in \{-1, 1\}^{2k}$. We call $z$ \emph{even periodic} if we can write $z = \bigoplus_{i=1}^{k/d} y$ for some proper divisor $d$ of $k$ and some $y \in \{-1, 1\}^{2d}$.
	\end{defin}
	
	Let $z \in \{-1, 1\}^{2c}$ be even periodic so that we can write $z = \bigoplus_{k=1}^{c/d} y$ for some positive integer $d$ and some $y \in \{-1, 1\}^{2d}$. Observe the following simple consequence of the even periodicity of $z$.
	\[
	u^+(k, 2c-1) = \left(\frac cd - \left\lceil \frac{k+1}{2d} \right\rceil \right) u^+(0, 2d-1) + u^+(k \bmod 2d, 2d-1)
	\]
	for all $k \in \{0, 1, 2, \ldots, 2c-1\}$. In particular we have that $u^+(0, 2c-1) = (c/d)u^+(0, 2d-1)$. The same holds when replacing $u^+$ by $v^+$. We will now show that a permutation in $\G$ cannot satisfy an even periodic sequence of Type One. 
	
	\begin{lem}\label{l:eperiod}
		Let $\alpha \in \G$ and $z \in \{-1, 1\}^{2c}$ be an even periodic sequence. 
		If $\alpha$ satisfies $z$ then $(z, \alpha)$ is of Type Two.
	\end{lem}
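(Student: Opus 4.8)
The plan is to exploit the even periodicity to show that $\alpha^d$ acts as a single affine map, iterated $c/d$ times, along the $c$-cycle $\beta$ witnessing that $\alpha$ satisfies $z$. Write $z = \bigoplus_{i=1}^{c/d} y$ with $y \in \{-1,1\}^{2d}$ and $d$ a proper divisor of $c$, and set $N = c/d \geq 2$. Since $z$ has period $2d$, cyclic rotation by $2rd$ fixes $z$, so $z^{2rd} = z$ for each $r$. By \lref{l:cycshift}, $\alpha$ therefore satisfies $z$ with cycle $\beta$ and element $p_r := \alpha^{rd}(j)$ for every $r \in \{0,1,\ldots,N-1\}$. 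The definition of $t(z,\alpha)$ together with the displayed consequence of even periodicity gives $t(z,\alpha) = N\,t_d$, where $t_d = u^+(0,2d-1)$ if $a \in \R_q$ and $t_d = v^+(0,2d-1)$ if $a \in \N_q$; this is the identity I must eventually convert into a root-of-unity statement.

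Next I would apply \lref{l:junalphr} (if $a \in \R_q$) or \lref{l:junalphnr} (if $a \in \N_q$) with $m = d$ and starting element $p_r$. Using the elementary fact that $u^-(0,2d-1) = -u^+(0,2d-1)$ (and likewise $v^-(0,2d-1) = -v^+(0,2d-1)$), which holds because each interval contains $d$ even and $d$ odd indices, the leading coefficient collapses to $\lambda := (ab^{-1})^{t_d}$. Because the subsequence of $\eta$-values read off over each block of length $d$ is the same copy $y$, the additive constant $S := \sum_{k=1}^{2d}(-1)^k a^{u^+(k,2d-1)} b^{u^-(k,2d-1)}$ is also independent of $r$. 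Hence $\alpha^d$ restricts on these points to the single affine map $x \mapsto \lambda x + S$, yielding the recurrence $p_{r+1} = \lambda p_r + S$ for $r \in \{0,1,\ldots,N-1\}$.

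Finally I would solve the recurrence and invoke the cycle structure. Since $\beta$ is a $c$-cycle we have $p_N = \alpha^c(j) = j = p_0$, while $p_0 \neq p_1$ because $d \not\equiv 0 \bmod c$. If $\lambda = 1$ then $(ab^{-1})^{t_d} = 1$, so $(ab^{-1})^{t(z,\alpha)} = (ab^{-1})^{N t_d} = 1$ and $(z,\alpha)$ is of Type Two. If $\lambda \neq 1$, solving the linear recurrence gives $p_N = \lambda^N p_0 + S(\lambda^N - 1)/(\lambda - 1)$, so $p_N = p_0$ rearranges to $(\lambda^N - 1)\bigl(p_0 + S/(\lambda - 1)\bigr) = 0$. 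Were $\lambda^N \neq 1$, the element $p_0 = -S/(\lambda - 1)$ would be a fixed point of $x \mapsto \lambda x + S$, forcing $p_1 = p_0$, a contradiction. Thus $\lambda^N = 1$, that is $(ab^{-1})^{t(z,\alpha)} = 1$, so $ab^{-1}$ is a $t(z,\alpha)$-th root of unity and $(z,\alpha)$ is of Type Two. The step requiring the most care is the identification of $\alpha^d$ as the \emph{same} affine map on every block; this is where the even periodicity, via $z^{2rd} = z$ and \lref{l:cycshift}, does the real work, while the recurrence computation and the residue/non-residue case split are routine.
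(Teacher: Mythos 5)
Your proof is correct and follows essentially the same route as the paper: both reduce $\alpha^d$ on the cycle to the affine map $x \mapsto (ab^{-1})^{t_d}x + S$ using the even periodicity, and both derive the contradiction that $\alpha^d(j)=j$ (via the fixed point of that map) if $ab^{-1}$ is not a $t(z,\alpha)$-th root of unity. The paper organises this as a geometric-series summation of the full formula for $\alpha^c(j)$ rather than your recurrence on the points $p_r$, but the content is identical.
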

	\begin{proof}
		Write $\alpha = \alpha[a, b]$ for some valid pair $(a, b) \in \F_q^2$. Write $z = \bigoplus_{k=1}^{c/d} y$ for some proper divisor $d$ of $c$ and some $y \in \{-1, 1\}^{2d}$. 
		Assume that $\alpha$ satisfies $z$ with cycle $\beta$ and element $j \in \beta$. Suppose that $a \in \mathcal{R}_q$. From \lref{l:junalphr} we know that
		\begin{equation}\label{e:alphnj}
			\begin{aligned}
				\alpha^c(j) &= a^{u^+(0, 2c-1)}b^{u^-(0, 2c-1)}j + \sum_{k=1}^{2c} (-1)^{k}a^{u^+(k, 2c-1)}b^{u^-(k, 2c-1)} \\
				&= (a^{u^+(0, 2d-1)}b^{u^-(0, 2d-1)})^{c/d}j + \\
				&\hspace{5mm} \sum_{k=1}^{2c} (-1)^{k}\left((a^{u^+(0, 2d-1)}b^{u^-(0, 2d-1)})^{c/d-\lceil (k+1)/(2d) \rceil} \cdot a^{u^+(k \bmod 2d, 2d-1)}b^{u^-(k \bmod 2d, 2d-1)}\right) \\
				&= (a^{u^+(0, 2d-1)}b^{u^-(0, 2d-1)})^{c/d}j + 
				\sum_{k=1}^{2d}(-1)^k \sum_{i=0}^{c/d-1} (a^{u^+(0, 2d-1)}b^{u^-(0, 2d-1)})^ia^{u^+(k, 2d-1)}b^{u^-(k, 2d-1)} \\
				&= (a^{u^+(0, 2d-1)}b^{u^-(0, 2d-1)})^{c/d}j + 
				\sum_{i=0}^{c/d-1} (a^{u^+(0, 2d-1)}b^{u^-(0, 2d-1)})^i \sum_{k=1}^{2d}(-1)^ka^{u^+(k, 2d-1)}b^{u^-(k, 2d-1)}.
			\end{aligned}
		\end{equation}
		Now suppose, for a contradiction, that $z$ is a Type One sequence. So $a^{u^+(0, 2c-1)}b^{u^-(0, 2c-1)} \neq 1$, hence $a^{u^+(0, 2d-1)}b^{u^-(0, 2d-1)} \neq 1$ also. Thus we can write
		\[
		\sum_{i=0}^{c/d-1} (a^{u^+(0, 2d-1)}b^{u^-(0, 2d-1)})^i = \frac{(1-(a^{u^+(0, 2d-1)}b^{u^-(0, 2d-1)})^{c/d})}{(1-a^{u^+(0, 2d-1)}b^{u^-(0, 2d-1)})}.
		\]
		Substituting this into \eref{e:alphnj} we have that
		\[
		\alpha^c(j) = (a^{u^+(0, 2d-1)}b^{u^-(0, 2d-1)})^{c/d}j + \frac{(1-(a^{u^+(0, 2d-1)}b^{u^-(0, 2d-1)})^{c/d})}{(1-a^{u^+(0, 2d-1)}b^{u^-(0, 2d-1)})}\sum_{k=1}^{2d}(-1)^ka^{u^+(k, 2d-1)}b^{u^-(k, 2d-1)}.
		\]
		Since $\alpha^c(j) = j$ we obtain
		\[
		j = \frac1{1-a^{u^+(0, 2d-1)}b^{u^-(0, 2d-1)}}\sum_{k=1}^{2d}(-1)^ka^{u^+(k, 2d-1)}b^{u^-(k, 2d-1)}.
		\]
		By clearing the denominator we obtain
		\[
		j = a^{u^+(0, 2d-1)}b^{u^-(0, 2d-1)}j + \sum_{k=1}^{2d}(-1)^ka^{u^+(k, 2d-1)}b^{u^-(k, 2d-1)} = \alpha^d(j)
		\]
		from \eref{e:junalph}. This contradicts the fact that $\beta$ is a $c$-cycle. The case where $a \in \mathcal{N}_q$ can be handled using analogous arguments.
	\end{proof}
	
	Let $\alpha \in \G$. Define
	\[
	X_{c, \alpha} = \{z \in \{-1, 1\}^{2c} : z \text{ is not even periodic and } (z, \alpha) \text{ is of Type One}\}.
	\]
	
	Define an equivalence relation $\sim$ on $X_{c, \alpha}$ by $z \sim y$ if and only if $z = y^{2i}$ for some $i \in \{0, 1, \ldots, c-1\}$. It is simple to verify that $\sim$ is indeed an equivalence relation on this set. For notational convenience we will identify an equivalence class of $X_{c, \alpha} / {\sim}$ with an element of that equivalence class.  
	By combining \lref{l:cycshift} and \lref{l:eperiod} we have the following result.
	
	\begin{lem}\label{l:cycsseqs}
		A permutation $\alpha \in \G$ contains a Type One $c$-cycle if and only if it satisfies a sequence in $X_{c, \alpha} / {\sim}$. 
	\end{lem}
	
	For a positive integer $m$ let $\Gamma_m$ be the subset of $\Gamma$ consisting of elements $\alpha[a, b]$ where $ab^{-1}$ is not a $k$-th root of unity, for any $k \in \{1, 2, \ldots, m\}$. We note that if $\alpha \in \G_c$ then the set $X_{c, \alpha}$ depends only on whether $a \in \mathcal{R}_q$ or $a \in \mathcal{N}_q$. Hence we use the term $X_{c, 1}$ to denote the set $X_{c, \alpha}$ for some $\alpha = \alpha[a, b] \in \G_c$ with $a \in \mathcal{R}_q$, and we write $X_{c, 2}$ to denote the set $X_{c, \alpha}$ for some $\alpha = \alpha[a, b] \in \G_c$ with $a \in \mathcal{N}_q$. The number of elements in the sets $X_{c, 1} / {\sim}$ and $X_{c, 2} / {\sim}$ is related to the number of Lyndon words of length $c$ over an alphabet of size four.
	
	We will now find necessary and sufficient conditions for a permutation $\alpha \in \G$ to satisfy a sequence in $X_{c, \alpha} / {\sim}$. Let $z \in \{-1, 1\}^{2c}$ and define the bivariate Laurent polynomial $F_{0, z}$ over $\F_q$ by
	\[
	F_{0, z}(x, y) = (1-(xy^{-1})^{u^+(0, 2c-1)})\sum_{k=1}^{2c}(-1)^kx^{u^+(k, 2c-1)}y^{u^-(k, 2c-1)}.
	\]
	Then for $i \in \{1, 2, \ldots, 2c-1\}$ define
	\[
	F_{i, z}(x, y) = x^{u^+(0, i-1)}y^{u^-(0, i-1)}F_{0, z}(x, y) + (1-(xy^{-1})^{u^+(0, 2c-1)})^2\sum_{k=1}^{i} (-1)^kx^{u^+(k, i-1)}y^{u^-(k, i-1)}.
	\]
	Also define the bivariate Laurent polynomial
	\[
	G_{0, z}(x, y) = (1-(xy^{-1})^{v^+(0, 2c-1)})\sum_{k=1}^{2c}(-1)^kx^{v^+(k, 2c-1)}y^{v^-(k, 2c-1)},
	\]
	and for $i \in \{1, 2, \ldots, 2c-1\}$ define
	\[
	G_{i, z}(x, y) = x^{v^+(0, i-1)}y^{v^-(0, i-1)}G_{0, z}(x, y) + (1-(xy^{-1})^{v^+(0, 2c-1)})^2\sum_{k=1}^{i} (-1)^kx^{v^+(k, i-1)}y^{v^-(k, i-1)}.
	\]
	
	\begin{lem}\label{l:t1necsuf}
		Let $\alpha = \alpha[a, b] \in \Gamma$ and $z \in X_{c, \alpha} / {\sim}$. If $a \in \mathcal{R}_q$ then $\alpha$ satisfies $z$ if and only if $\eta(F_{i, z}(a, b)) = z_i$ for all $i \in \{0, 1, 2, \ldots, 2c-1\}$. If $a \in \mathcal{N}_q$ then $\alpha$ satisfies $z$ if and only if $\eta(G_{i, z}(a, b)) = z_i$ for all $i \in \{0, 1, 2, \ldots, 2c-1\}$.
	\end{lem}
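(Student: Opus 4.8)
The plan is to treat the case $a \in \R_q$ in detail; the case $a \in \N_q$ is identical after replacing every $u^\pm$ by the corresponding $v^\pm$, every $F_{i,z}$ by $G_{i,z}$, and invoking \lref{l:junalphnr} in place of \lref{l:junalphr}. Write $D = (1-(ab^{-1})^{u^+(0,2c-1)})^2$. Since $(z,\alpha)$ is of Type One, $(ab^{-1})^{u^+(0,2c-1)} \neq 1$, so $D$ is a nonzero square in $\F_q$ and hence $\eta(Dx) = \eta(x)$ for all $x \in \F_q$. The engine of the whole argument is the pair of evaluation identities
\[
F_{2m,z}(a,b) = D\,\alpha^m(j), \qquad F_{2m+1,z}(a,b) = D\bigl(\varphi^{-1}(\alpha^m(j))-1\bigr),
\]
which hold for any $j \in \F_q$ satisfying $F_{0,z}(a,b) = Dj$ and for which the explicit expressions of \lref{l:junalphr} compute $\alpha^m(j)$ and $\varphi^{-1}(\alpha^m(j))-1$. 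I would verify these by substituting $F_{0,z}(a,b) = Dj$ into the defining formulas for $F_{2m,z}$ and $F_{2m+1,z}$ and matching term by term against \eref{e:junalph} and \eref{e:junalphvar} multiplied through by $D$; the only tool required is the additivity $u^+(0,i-1) + u^+(i,j) = u^+(0,j)$ (and the same for $u^-$) already noted in the text.

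For the forward direction, suppose $\alpha$ satisfies $z$ with cycle $\beta$ and element $j$. Then \lref{l:junalphr} applies, and the $m=c$ case of \eref{e:junalph} together with $\alpha^c(j) = j$ and $u^-(0,2c-1) = -u^+(0,2c-1)$ gives $(1-(ab^{-1})^{u^+(0,2c-1)})\,j = \sum_{k=1}^{2c}(-1)^k a^{u^+(k,2c-1)}b^{u^-(k,2c-1)}$; multiplying by $1-(ab^{-1})^{u^+(0,2c-1)}$ produces exactly $F_{0,z}(a,b) = Dj$. The evaluation identities then give $\eta(F_{i,z}(a,b)) = \eta(D w_i) = \eta(w_i) = z_i$, where $w_i$ equals $\alpha^{i/2}(j)$ or $\varphi^{-1}(\alpha^{(i-1)/2}(j))-1$ according to the parity of $i$, the final equality being \dref{d:asat}.

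For the converse, assume $\eta(F_{i,z}(a,b)) = z_i$ for all $i \in \{0,1,\ldots,2c-1\}$ and set $j = F_{0,z}(a,b)/D$, which is well defined since $D \neq 0$. I would prove by induction on $m$ that the formulas of \lref{l:junalphr} correctly compute $\alpha^m(j)$ and $\varphi^{-1}(\alpha^m(j))-1$ and that $\eta(\alpha^m(j)) = z_{2m}$ and $\eta(\varphi^{-1}(\alpha^m(j))-1) = z_{2m+1}$. Granting the formula for $\alpha^m(j)$, the first evaluation identity reads $F_{2m,z}(a,b) = D\alpha^m(j)$, so the hypothesis forces $\eta(\alpha^m(j)) = z_{2m}$; this character value is precisely what selects the branch of $\varphi^{-1}$ needed to propagate the formula to $\varphi^{-1}(\alpha^m(j))-1$, whereupon the second identity and the hypothesis give $\eta(\varphi^{-1}(\alpha^m(j))-1) = z_{2m+1}$, which in turn selects the branch of $\varphi$ carrying the induction to step $m+1$. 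Because each $z_i \in \{-1,1\}$, every $F_{i,z}(a,b)$ is nonzero, so no iterate lands on $0$ and the $0 \mapsto 0$ branch of $\varphi$ is never triggered. Finally, the $m=c$ case of \eref{e:junalph} together with $F_{0,z}(a,b) = Dj$ yields $\alpha^c(j) = j$, and since $z$ is not even periodic no proper divisor of $c$ can be a period of the character sequence of $j$, so the cycle of $\alpha$ through $j$ has length exactly $c$; thus $\alpha$ satisfies $z$.

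The main obstacle is exactly this converse induction: the branch selection for $\varphi$ and $\varphi^{-1}$ and the verification of the character conditions must be carried out in lock-step, since the closed form for $w_i$ — needed before one can apply the evaluation identity and read off $z_i$ — itself presupposes that all earlier branch choices were correct. Once the $u^\pm$ (respectively $v^\pm$) additivity and the nonvanishing of each $F_{i,z}(a,b)$ are in hand, the induction closes and both directions follow.
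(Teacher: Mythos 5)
Your proposal is correct and follows the same route as the paper: the forward direction via the evaluation identities $F_{2m,z}(a,b) = D\,\alpha^m(j)$ and $F_{2m+1,z}(a,b) = D(\varphi^{-1}(\alpha^m(j))-1)$ derived from \lref{l:junalphr}, and the converse by taking $j = F_{0,z}(a,b)/D$, which the paper dismisses as ``simple to verify.'' Your lock-step induction on the branch choices and the appeal to non-even-periodicity to get cycle length exactly $c$ are precisely the details that verification requires, and they are carried out correctly.
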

	\begin{proof}
		We will prove the lemma in the case where $a \in \R_q$. The case where $a \in \N_q$ can be proven using analogous arguments. Suppose that $\alpha$ satisfies $z$ with element $j \in \F_q$. It follows from \lref{l:junalphr} that $F_{2i, z}(a, b) = (1-(ab^{-1})^{u^+(0, 2c-1)})^2\alpha^i(j)$ and $F_{2i+1, z}(a, b) = (1-(ab^{-1})^{u^+(0, 2c-1)})^2(\varphi^{-1}(\alpha^i(j))-1)$ for all $i \in \{0, 1, 2, \ldots, c-1\}$. Since $\alpha$ satisfies $z$ we know that $\eta(F_{i, z}(a, b)) = \eta((1-(ab^{-1})^{u^+(0, 2c-1)})^2F_{i, z}(a, b)) = z_i$ for all $i \in \{0, 1, 2, \ldots, 2c-1\}$. Now suppose that $\eta(F_{i, z}(a, b)) = z_i$ for all $i \in \{0, 1, 2, \ldots, 2c-1\}$. It is simple to verify that $\alpha$ satisfies $z$ with element $j = F_{0, z}(a, b)/(1-(ab^{-1})^{u^+(0, 2c-1)})^2$. 
	\end{proof}
	
	Combining \lref{l:cycsseqs} and \lref{l:t1necsuf} we can obtain necessary and sufficient conditions for a permutation in $\G$ to contain a Type One cycle of length $c$. We will see in \sref{ss:n2} that we can use these conditions to bound the number of permutations in $\G_c$ which contain a Type One $c$-cycle.
	
	\subsection{Type Two cycles}\label{sss:t2}
	
	In this subsection we provide necessary conditions for a permutation in $\G$ to contain a Type Two $c$-cycle. We also describe how to use these conditions to bound the number of permutations in $\G_c$ which contain a Type Two cycle of length $c$. For a permutation $\alpha \in \G$, define $Y_{c, \alpha}$ to be the set of sequences $z \in \{-1, 1\}^{2c}$ such that $(z, \alpha)$ is of Type Two. Note that for a permutation $\alpha = \alpha[a, b] \in \G_c$, the set $Y_{c, \alpha}$ depends only on whether $a \in \mathcal{R}_q$ or $a \in \mathcal{N}_q$. Therefore we will write $Y_{c, 1}$ to be $Y_{c, \alpha}$ for some $\alpha = \alpha[a, b] \in \G_c$ with $a \in \mathcal{R}_q$. Similarly we will write $Y_{c, 2}$ to be $Y_{c, \alpha}$ for some $\alpha = \alpha[a, b] \in \G_c$ with $a \in \mathcal{N}_q$. The following is a consequence of \lref{l:cycshift}.
	
	\begin{lem}\label{l:t2cycs}
		A permutation $\alpha \in \G$ contains a Type Two $c$-cycle if and only if it satisfies a sequence in $Y_{c, \alpha} / {\sim}$.
	\end{lem}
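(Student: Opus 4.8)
The plan is to mirror the proof of \lref{l:cycsseqs}, but the argument is shorter because, unlike $X_{c, \alpha}$, the set $Y_{c, \alpha}$ is not required to exclude even periodic sequences, so \lref{l:eperiod} plays no role here. First I would record the two well-definedness facts that make the quotient $Y_{c, \alpha}/{\sim}$ and the phrase ``$\alpha$ satisfies a sequence in $Y_{c, \alpha}/{\sim}$'' meaningful: that $Y_{c, \alpha}$ is closed under the map $z \mapsto z^{2i}$, and that satisfiability is constant on $\sim$-classes. The first holds because the Type of $(z, \alpha)$ is governed by $t(z, \alpha)$, which depends only on $u^+(0, 2c-1)$ (or $v^+(0, 2c-1)$), and an even cyclic rotation merely permutes the positions of $z$ while preserving the parity of every index, hence leaves these counts unchanged. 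The second is exactly \lref{l:cycshift}: since $z = (z^{2i})^{2(c-i)}$, we get that $\alpha$ satisfies $z$ if and only if it satisfies each $z^{2i}$.

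For the forward direction, suppose $\alpha$ contains a Type Two $c$-cycle $\beta$; by definition this means $\beta \notin \{\alpha_0, \alpha_a\}$. Choosing any $j \in \beta$ and applying \dref{d:asat} associates a sequence $z \in \{-1, 1\}^{2c}$ to $(\beta, j)$, and I would first check that $z$ has no zero entries: every $\alpha^k(j)$ lies in $\beta$ and so differs from $0$ and from $a = \varphi(1)$, making both $\eta(\alpha^k(j))$ and $\eta(\varphi^{-1}(\alpha^k(j)) - 1)$ nonzero. Since $\beta$ is Type Two, $(z, \alpha)$ is of Type Two, so $z \in Y_{c, \alpha}$, and $\alpha$ satisfies $z$ by construction; thus $\alpha$ satisfies the corresponding class of $Y_{c, \alpha}/{\sim}$. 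Different choices of $j$ yield sequences that are even cyclic rotations of one another by \lref{l:cycshift}, so they all represent the same class, which is why a single Type Two cycle corresponds to a single element of the quotient.

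For the converse, suppose $\alpha$ satisfies some $z \in Y_{c, \alpha}$. By \dref{d:asat} there is a genuine $c$-cycle $\beta$ and an element $j \in \beta$ realising $z$; because $z \in \{-1, 1\}^{2c}$ has no zero entries, no $\alpha^k(j)$ equals $0$ or $a$, so $\beta \notin \{\alpha_0, \alpha_a\}$ and the Type classification applies to $\beta$. As $(z, \alpha)$ is of Type Two, $\beta$ is a Type Two cycle of length $c$, completing the equivalence. I expect the only delicate points to be the bookkeeping of well-definedness for the quotient together with the verification that the associated sequence avoids zeros; there is no analogue here of the obstruction encountered in \lref{l:cycsseqs}, since \dref{d:asat} already builds in that $\beta$ is a bona fide $c$-cycle and we retain the even periodic sequences in $Y_{c, \alpha}$, so nothing needs to be excluded.
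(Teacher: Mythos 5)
Your proof is correct and matches the paper's intent: the paper states this lemma without a written proof, noting only that it is ``a consequence of \lref{l:cycshift}'', and your argument is exactly the natural unpacking of \dref{d:asat}, the definition of Type Two cycles, and \lref{l:cycshift} that the author has in mind. Your observations that the associated sequence has no zero entries precisely because $\beta \not\in \{\alpha_0, \alpha_a\}$, and that \lref{l:eperiod} is not needed here, are both accurate.
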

	
	Let $f(x_1, x_2, \ldots, x_k)$ be a Laurent polynomial over $\F_q$ and let $i \in \{1, 2, \ldots, k\}$. The \emph{total degree} of $f$ in $x_i$, denoted by $\deg(f, x_i)$, is the difference between the maximum power of $x_i$ in $f$, and the minimum power of $x_i$ in $f$. If $k=1$ then we say that $f$ has total degree $\deg(f, x_1)$.
	
	\begin{lem}\label{l:t2polys}
		Let $z \in \{-1, 1\}^{2c}$. There is a bivariate Laurent polynomial $g(x, y)$ over $\F_q$ with $\deg(g, x) \leq 2c$ and $\deg(g, y) \leq 2c$ such that if $\alpha = \alpha[a, b] \in \G$ satisfies $z$, $a \in \R_q$ and $(z, \alpha)$ is of Type Two then $(a, b)$ is a root of $g$.
		Similarly there is a polynomial $h(x, y)$ over $\F_q$ with $\deg(h, x) \leq 2c$ and $\deg(h, y) \leq 2c$ such that if $\alpha = \alpha[a, b] \in \G$ satisfies $z$, $a \in \N_q$ and $(z, \alpha)$ is of Type Two then $(a, b)$ is a root of $h$.
	\end{lem}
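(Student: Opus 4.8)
The plan is to read the two required polynomials straight off the closed-form expressions for $\alpha^c(j)$ furnished by \lref{l:junalphr} and \lref{l:junalphnr}, using the Type Two hypothesis to annihilate the coefficient of $j$. I would treat $a \in \R_q$ first. The natural candidate is
\[
g(x, y) = \sum_{k=1}^{2c} (-1)^{k}x^{u^+(k, 2c-1)}y^{u^-(k, 2c-1)},
\]
that is, precisely the part of \eref{e:junalph} (with $m = c$) which does not involve $j$.

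The key step is then a short computation. Suppose $\alpha = \alpha[a,b]$ satisfies $z$ with some cycle $\beta$ and element $j$, where $a \in \R_q$ and $(z,\alpha)$ is of Type Two. Setting $m = c$ in \eref{e:junalph} gives $\alpha^c(j) = a^{u^+(0, 2c-1)}b^{u^-(0, 2c-1)}j + g(a, b)$. By \dref{d:asat} the cycle $\beta$ has length $c$, so $\alpha^c(j) = j$; and, as already recorded before the definition of Type, $u^-(0,2c-1) = -u^+(0,2c-1)$, so the coefficient of $j$ equals $(ab^{-1})^{t(z,\alpha)}$, which is $1$ precisely because $(z,\alpha)$ is of Type Two. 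These two facts collapse the identity to $g(a,b) = 0$, yielding the claimed root.

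Next I would verify the degree bounds, the only step requiring a moment's thought. Since each window $[k, 2c-1]$ sits inside $[0, 2c-1]$, and the latter contains exactly $c$ even and $c$ odd indices, both $e^+(k,2c-1)$ and $o^+(k,2c-1)$ lie in $\{0, 1, \ldots, c\}$; hence every exponent $u^+(k,2c-1) = o^+(k,2c-1) - e^+(k,2c-1)$ (and likewise $u^-(k,2c-1)$) lies in $\{-c, \ldots, c\}$. Therefore the $x$-exponents appearing in $g$ all lie in $[-c,c]$, so $\deg(g,x) \le 2c$, and by symmetry $\deg(g,y) \le 2c$.

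Finally, the case $a \in \N_q$ is formally identical: I would replace \lref{l:junalphr} by \lref{l:junalphnr} and the symbols $u^+, u^-$ by $v^+, v^-$ throughout, define
\[
h(x, y) = \sum_{k=1}^{2c} (-1)^{k}x^{v^+(k, 2c-1)}y^{v^-(k, 2c-1)},
\]
and use $v^-(0,2c-1) = -v^+(0,2c-1)$ to see that the coefficient of $j$ is again $(ab^{-1})^{t(z,\alpha)} = 1$ under the Type Two hypothesis. There is no substantive obstacle here: all of the analytic content already lives in \lref{l:junalphr} and \lref{l:junalphnr}, and what remains is bookkeeping together with the elementary exponent count above. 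I would only remark that for the subsequent counting argument one will also want $g$ and $h$ to be non-vanishing, but that is not asserted in this statement and would be handled separately.
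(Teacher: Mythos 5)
Your proposal is correct and matches the paper's proof essentially verbatim: the paper takes exactly the same Laurent polynomials $g(x,y)=\sum_{k=1}^{2c}(-1)^k x^{u^+(k,2c-1)}y^{u^-(k,2c-1)}$ and its $v^\pm$ analogue, deduces $g(a,b)=0$ from $\alpha^c(j)=j$ together with the Type Two condition collapsing the coefficient of $j$ to $1$, and bounds the degrees by noting each exponent lies in $\{-c,\ldots,c\}$. Your extra remark that non-vanishing of $g$ and $h$ is a separate issue is also consistent with the paper's discussion following the lemma.
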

	\begin{proof}
		Let $\alpha = \alpha[a, b] \in \G$ satisfy $z$ and be such that $(z, \alpha)$ is of Type Two. First suppose that $a \in \mathcal{R}_q$. As $\alpha$ satisfies $z$ it follows from \lref{l:junalphr} that $(a, b)$ is a root of the bivariate Laurent polynomial
		\[
		g(x, y) = \sum_{k=1}^{2c} (-1)^kx^{u^+(k, 2c-1)}y^{u^-(k, 2c-1)}.
		\]
		The total degree of $g$ in $y$ is equal to the quantity $\max\{u^-(k, 2c-1) : k \in \{1, 2\ldots, 2c\}\} - \min\{u^-(k, 2c-1) : k \in \{1, 2\ldots, 2c\}\} \leq 2c$ because $u^-(k, 2c-1) \leq c$ for any $k \in \{0, 1, 2, \ldots, 2c-1\}$. Similarly $\deg(g, x) \leq 2c$. The case where $a \in \mathcal{N}_q$ can be handled using analogous arguments.
	\end{proof}
	
	We will denote the Laurent polynomials $g$ and $h$ in \lref{l:t2polys} associated to the sequence $z \in \{-1, 1\}^{2c}$ by $g_z$ and $h_z$, respectively. \lref{l:t2cycs} and \lref{l:t2polys} could be used to bound the number of permutations in $\G_c$ which contain a Type Two $c$-cycle. The number of roots of a non-zero bivariate Laurent polynomial $f(x, y)$ over $\F_q$ is bounded by $q\deg(f, y)$. If a permutation $\alpha[a, b] \in \G_c$ with $a \in \R_q$ contains a $c$-cycle then $(a, b)$ must be a root of $g_z$ for some $z \in Y_{c, 1} / {\sim}$. If $g_z$ is not the zero polynomial for any  $z \in Y_{c, 1} / {\sim}$, then we can use \lref{l:t2polys} to bound the number of permutations $\alpha[a, b] \in \G_c$ with $a \in \R_q$ which contain a Type Two $c$-cycle. Similarly, if $h_z$ is not the zero polynomial for any  $z \in Y_{c, 2} / {\sim}$ then we can bound the number of permutations $\alpha[a, b] \in \G_c$ with $a \in \N_q$ which contain a Type Two $c$-cycle. However we note that if $c$ is equal to the characteristic of $\F_q$, then there do exist sequences $z \in (Y_{c, 1} / {\sim}) \cup (Y_{c, 2} / {\sim})$ such that $g_z$ or $h_z$ is the zero polynomial. This fact will be used in \sref{ss:p-cycles}.
	
	\subsection{$\alpha_0$ and $\alpha_a$}\label{sss:a0aa}
	
	In this subsection we bound the number of permutations $\alpha \in \G$ such that $\alpha_0$ or $\alpha_a$ have length $c$.
	
	\begin{lem}\label{l:a0aapoly}
		Let $m \in \{0, 1, 2, \ldots, q-1\}$. There is a set $T_m$ containing at most $4^m$ trivariate Laurent polynomials over $\F_q$ which satisfies the following property: For every $\alpha = \alpha[a, b] \in \G$ and every $j \in \F_q$, there is some $t \in T_m$ such that $\alpha^m(j) = t(a, b, j)$. Furthermore, for each $t(x, y, z) \in T_m$ it holds that $\deg(t, x) \leq m$ and $\deg(t, y) \leq m$.
	\end{lem}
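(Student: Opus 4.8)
The plan is to prove this by induction on $m$, tracking at each step a finite set $T_m$ of Laurent polynomials $t(x,y,z)$ such that $\alpha^m(j) = t(a,b,j)$ for an appropriate choice of $t \in T_m$ depending on $(a,b,j)$. The base case $m=0$ is immediate: take $T_0 = \{z\}$, since $\alpha^0(j) = j = t(a,b,j)$ with $t(x,y,z) = z$, and the degree bounds $\deg(t,x) = \deg(t,y) = 0$ hold trivially.

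For the inductive step, I would use the defining relation $\alpha(w) = \varphi(\varphi^{-1}(w)-1)+1$. The key point is that both $\varphi$ and $\varphi^{-1}$ act as one of at most two scalar multiplications depending on the quadratic character of their input. Specifically, applying $\varphi^{-1}$ to $\alpha^m(j)$ multiplies by $a^{-1}$ or $b^{-1}$ according to whether $\alpha^m(j) \in \R_q$ or $\N_q$ (or gives $0$ in the degenerate case, which contributes to $\alpha_0$ or $\alpha_a$ and is handled by the special polynomials), and then applying $\varphi$ after subtracting $1$ multiplies by $a$ or $b$ according to the character of $\varphi^{-1}(\alpha^m(j))-1$. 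Thus from each polynomial $t \in T_m$ I obtain at most four successors in $T_{m+1}$, one for each of the four sign choices $(\eta(\alpha^m(j)), \eta(\varphi^{-1}(\alpha^m(j))-1)) \in \{-1,1\}^2$, each of the form
\[
t'(x,y,z) = c_1\bigl(c_2^{-1} t(x,y,z) - 1\bigr) + 1,
\]
where $c_1, c_2 \in \{x, y\}$ are determined by the two sign choices. This gives $|T_{m+1}| \leq 4|T_m|$, and hence $|T_m| \leq 4^m$ by induction, matching the claimed bound.

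For the degree bounds, I would observe that passing from $t$ to $t'$ increases the total degree in $x$ by at most one (since $c_1, c_2^{-1}$ each contribute a factor $x^{\pm 1}$ in the worst case, but the additive constants do not raise the $x$-degree), and similarly for $y$; starting from degree $0$ at $m=0$, this yields $\deg(t,x) \leq m$ and $\deg(t,y) \leq m$ after $m$ steps. A small subtlety here is that $c_2^{-1}$ introduces negative powers, so I must be careful to interpret "total degree" as the spread between maximal and minimal exponents (as defined just before \lref{l:t2polys}); I expect the spread in each variable to grow by at most one per step, which still gives the bound $m$.

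The main obstacle is the treatment of the degenerate cases where $\alpha^m(j) = 0$ or $\varphi^{-1}(\alpha^m(j)) - 1 = 0$, i.e.\ where the quadratic character vanishes and $\varphi$ or $\varphi^{-1}$ sends the argument to $0$ rather than scaling it. In these cases the point has entered the cycle $\alpha_0$ or $\alpha_a$, and the "multiply by $a^{\pm 1}$ or $b^{\pm 1}$" description breaks down. I would handle this by noting that when the input to $\varphi$ or $\varphi^{-1}$ is $0$, the output is simply $0$ (a constant polynomial), and the subsequent iterate is then $\varphi(0-1)+1 = \varphi(-1)+1$ or similar, which is again one of a constant number of explicit expressions; I can fold these into $T_{m+1}$ as additional (constant-coefficient) polynomials without exceeding the count $4^m$, since each such branch replaces rather than adds to one of the four generic successors. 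Verifying that the degree bounds survive these degenerate branches is routine, since constants have degree $0$ in both $x$ and $y$.
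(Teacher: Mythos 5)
Your proposal is correct and follows essentially the same inductive argument as the paper: four successor polynomials of the form $c_1(c_2^{-1}t-1)+1$ with $c_1,c_2\in\{x,y\}$ per element of $T_m$, giving $|T_{m+1}|\leq 4|T_m|$ and degree growth of at most one per step in each variable. Your explicit treatment of the degenerate cases $\alpha^m(j)=0$ and $\varphi^{-1}(\alpha^m(j))-1=0$ is more careful than the paper's (which passes over them silently), though in fact the four generic successors already evaluate to the correct values $1-a$, $1-b$ or $1$ in those cases, so no extra polynomials are needed.
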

	\begin{proof}
		We will prove the claim by induction on $m$. When $m=0$, the set $T_0$ containing the polynomial $t(x, y, z)=z$ suffices. Now suppose that the claim is true for some $m \geq 0$. Let $\alpha = \alpha[a, b] \in \G$.
		By induction we know that $\alpha^{m+1}(j) = \alpha(t(a, b, j))$ for some $t \in T_m$. If $a \in \R_q$ then
		\[
		\alpha^{m+1}(j) = \begin{cases}
			t(a, b, j) - a + 1 & \text{if } \{t(a, b, j), a^{-1}t(a, b, j)-1\} \subseteq \mathcal{R}_q, \\
			a^{-1}bt(a, b, j)-b+1 & \text{if } t(a, b, j) \in \mathcal{R}_q \text{ and } a^{-1}t(a, b, j)-1 \in \mathcal{N}_q, \\
			ab^{-1}t(a, b, j) - a + 1 & \text{if } t(a, b, j) \in \mathcal{N}_q \text{ and } b^{-1}t(a, b, j)-1 \in \mathcal{R}_q, \\
			t(a, b, j) - b + 1 & \text{if } \{t(a, b, j), b^{-1}t(a, b, j)-1\} \subseteq \mathcal{N}_q.
		\end{cases}
		\]
		Similarly if $a \in \N_q$ then $\alpha^{m+1}(j) \in \{t(a, b, j) - a + 1, a^{-1}bt(a, b, j)-b+1, ab^{-1}t(a, b, j) - a + 1, t(a, b, j) - b + 1\}$.
		Define $T_{m+1} = \{t(x, y, z) - x + 1, x^{-1}yt(x, y, z)-y+1, xy^{-1}t(x, y, z) - x + 1, t(x, y, z) - y + 1 : t \in T_m\}$. By construction $\alpha^{m+1}(j) = t(a, b, j)$ for some $t \in T_{m+1}$. Also, $|T_{m+1}| \leq 4|T_m| \leq 4^{m+1}$ by induction. Furthermore, each $t \in T_{m+1}$ has been obtained from some $t' \in T_m$. The process of changing $t'$ to $t$ increases the total degree in $x$ and the total degree in $y$ by at most one. Therefore $\deg(t, x) \leq m+1$ and $\deg(t, y) \leq m+1$ for all $t \in T_{m+1}$. 
	\end{proof}
	
	We can use \lref{l:a0aapoly} to bound the number of permutations $\alpha \in \G$ such that $\alpha_0$ or $\alpha_a$ is a $c$-cycle. Let $T_c$ be the set of trivariate Laurent polynomials from \lref{l:a0aapoly}. The number of pairs $(a, b)$ which are solutions to the equation $t(x, y, 0) = 0$ for some $t \in T$ is at most $qc$. As $|T| \leq 4^c$ it follows that the number of permutations $\alpha \in \G$ with $\alpha_0$ being a $c$-cycle is at most $qc4^c$. The same conclusion holds for the number of permutations $\alpha \in \G$ such that $\alpha_a$ is of length $c$.
	
	\section{$N_2$ quadratic Latin squares}\label{ss:n2}
	
	In this section we will apply the results proven in \sref{ss:quadcyc} to investigate permutations in $\G$ which contain cycles of length two, also known as \emph{transpositions}. This will allow us to prove \tref{t:n2}. 
	Throughout this section let $q$ be an odd prime power. We will first determine when a permutation in $\G$ contains a Type One transposition. To do this, we construct the sequences in $X_{2, 1} / {\sim}$ and $X_{2, 2} / {\sim}$. We then apply \lref{l:t1necsuf} to these sequences to obtain necessary and sufficient conditions for a permutation in $\G_2$ to contain a Type One transposition. We know that the set $\G \setminus \G_2$ consists of the permutations $\alpha[a, a]$ and $\alpha[a, -a]$, which will be dealt with separately. Following the described method we obtain the following result.
	
	\begin{lem}\label{l:t1}
		The permutation $\alpha[a, b] \in \G_2$ contains a Type One transposition if and only if
		\[
		(2ab-a-b)(a+b)(a-1) \in \R_q \text{ and } \{2(a+b-2)(a-1), 2a(a+b)\} \subseteq \N_q.
		\]
	\end{lem}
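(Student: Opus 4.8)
The plan is to carry out, in the case $c=2$, the general programme set up in \sref{ss:quadcyc}: enumerate the relevant sequences, use \lref{l:t1necsuf} to convert ``$\alpha$ satisfies $z$'' into explicit quadratic-character conditions on $a$ and $b$, and then take the disjunction over all surviving sequences, invoking \lref{l:cycsseqs}. First I would enumerate $\{-1,1\}^4$. The four even periodic sequences are discarded by \lref{l:eperiod}. For the remaining twelve I would compute $t(z,\alpha)=u^+(0,3)$ (when $a\in\R_q$) or $v^+(0,3)$ (when $a\in\N_q$); the four with $t=0$ are Type Two, since every nonzero element of $\F_q$ is a $0$-th root of unity. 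This leaves eight sequences, forming four classes under $\sim$, and because $\alpha\in\G_2$ forces $ab^{-1}\neq\pm1$ (so $ab^{-1}$ is not a $(\pm1)$-th root of unity) all four classes are Type One; these are precisely the classes making up $X_{2,1}/{\sim}$ and $X_{2,2}/{\sim}$.

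Next, for a fixed class with representative $z$, \lref{l:t1necsuf} reduces ``$\alpha$ satisfies $z$'' to the four sign conditions $\eta(F_{i,z}(a,b))=z_i$ (or $\eta(G_{i,z}(a,b))=z_i$ when $a\in\N_q$). Equivalently, solving $\alpha^2(j)=j$ via \lref{l:junalphr} (or \lref{l:junalphnr}) pins down the starting element $j$ as an explicit rational function of $a,b$, and the four conditions become the residue statuses of $j$, of $\varphi^{-1}(j)-1$, of $\alpha(j)$, and of $\varphi^{-1}(\alpha(j))-1$. I would compute each of these as a rational function of $a,b$ and simplify its character using multiplicativity of $\eta$ together with the validity relations $\eta(a)=\eta(b)$ and $\eta(a-1)=\eta(b-1)$, both immediate from $\{ab,(a-1)(b-1)\}\subseteq\R_q$. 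The upshot I expect is that the four conditions of each class are equivalent to the three displayed conditions together with exactly one extra sign condition, which fixes the value of $\eta\big((a-b)(2ab-a-b)\big)$.

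Finally I would assemble the disjunction. The three displayed conditions are common to every class and therefore survive; the content of the lemma is that the extra per-class condition is eliminated. Across the four classes the required value of $\eta\big((a-b)(2ab-a-b)\big)$ runs through all of $1,\ -1,\ \eta(-1),\ -\eta(-1)$, and this set equals $\{1,-1\}$ regardless of $q\bmod 4$; hence the disjunction imposes no constraint on that quantity, leaving exactly the three stated conditions. The case $a\in\N_q$ is handled by the identical computation with $v^+$ in place of $u^+$; one must only note that the three displayed conditions translate into the sign variables slightly differently there because $\eta(a)=\eta(b)=-1$, but the resulting residue conditions on $(a,b)$ are the same. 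I expect the main obstacle to be purely organisational: keeping the eight sign computations (four classes, two residue classes of $a$) straight and correctly tracking the factors of $\eta(-1)$, which pervade the intermediate expressions yet cancel out of the final answer.
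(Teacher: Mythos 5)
Your proposal is correct and follows essentially the same route as the paper: enumerate $\{-1,1\}^4$, discard the even periodic and $t=0$ sequences, apply \lref{l:t1necsuf} to the four remaining classes of $X_{2,\cdot}/{\sim}$, and merge the resulting quadratic-character conditions using multiplicativity of $\eta$ and the validity relations $\eta(a)=\eta(b)$, $\eta(a-1)=\eta(b-1)$. The paper phrases the merging step as pairing up its four conditions $(i)$--$(iv)$ into two mutually negated conditions rather than your ``three common relative signs plus one exhausted absolute sign'' bookkeeping, but the two descriptions amount to the same computation (and I have checked that the per-class values of $\eta\bigl((a-b)(2ab-a-b)\bigr)$ are indeed $1,-1,\eta(-1),-\eta(-1)$ as you predict).
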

	\begin{proof}
		We distinguish four cases, depending on whether $q \equiv 1 \bmod 4$ or $q \equiv 3 \bmod 4$ and whether $a \in \R_q$ or $a \in \N_q$. There are only minor differences in the arguments for these four cases so we will only prove the case where $q \equiv 3 \bmod 4$ and $a \in \R_q$. By iterating through the sequences in $X_{2, 1} / {\sim}$ and using \lref{l:t1necsuf} we can determine that a permutation $\alpha[a, b] \in \G_2$ with $a \in \R_q$ contains a Type One transposition if and only if: 
		\begin{enumerate}[$(i)$]
			\item $\{(2ab-a-b)(a-b), b(a+b)(b-1)(a-b), b(a+b-2)(a-b), 2(1-b)(a-b)\} \subseteq \N_q$,
			\item $\{(a+b)(1-b)(a-b), b(a+b-2ab)(a-b), 2a(b-1)(a-b), (2-a-b)(a-b)\} \subseteq \N_q$,
			\item $\{2b(a-1)(a-b), (2-a-b)(a-b), (1-a)(a+b)(a-b), a(a+b-2ab)(a-b)\} \subseteq \N_q$, or 
			\item $\{a(a+b-2)(a-b), 2(1-a)(a-b), (2ab-a-b)(a-b), a(a-1)(a+b)(a-b)\} \subseteq \N_q$.
		\end{enumerate} 
		Using the fact that $-1 \in \mathcal{N}_q$ and $\{a, b, (a-1)(b-1)\} \subseteq \mathcal{R}_q$ we can combine conditions $(i)$ and $(iv)$ to be
		\begin{equation}\label{e:v}
			\{(2ab-a-b)(a-b), (a+b)(a-1)(a-b), (a+b-2)(a-b), 2(1-a)(a-b)\} \subseteq \N_q.
		\end{equation}
		Similarly we can combine conditions $(ii)$ and $(iii)$ to be
		\begin{equation}\label{e:vi}
			\{(a+b)(1-a)(a-b), (a+b-2ab)(a-b), 2(a-1)(a-b), (2-a-b)(a-b)\} \subseteq \N_q.
		\end{equation}
		The lemma then follows by combining \eref{e:v} and \eref{e:vi}.
	\end{proof}
	
	We will now determine when a permutation in $\G_2$ contains a Type Two transposition. To do this, we first compute the sets $Y_{2, 1} / {\sim}$ and $Y_{2, 2} / {\sim}$. We know from \lref{l:t2polys} that if $\alpha[a, b] \in \G_2$ satisfies a sequence in one of these sets, then $(a, b)$ must be a root of some bivariate Laurent polynomial. By iterating through the sequences in $Y_{2, 1} / {\sim}$ and $Y_{2, 2} / {\sim}$ and constructing the associated Laurent polynomials we obtain the following lemma.
	\begin{lem}\label{l:n2t2}
		Let $\alpha = \alpha[a, b] \in \G_2$ and recall that $1 \not\in \{a, b\}$. If $\alpha$ contains a Type Two transposition then the pair $(a, b)$ is a solution to one of the following equations:
		\begin{enumerate}[(i)]
			\item $2-a-b=0$,
			\item $1-2b+ba^{-1}=0$.
		\end{enumerate}
	\end{lem}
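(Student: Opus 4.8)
The statement asserts a \emph{necessary} condition: if $\alpha[a,b] \in \G_2$ contains a Type Two transposition, then $(a,b)$ satisfies one of two explicit polynomial equations. By \lref{l:t2cycs}, $\alpha$ contains a Type Two transposition if and only if it satisfies some sequence $z$ in $Y_{2,1}/{\sim}$ or $Y_{2,2}/{\sim}$. By \lref{l:t2polys}, for each such $z$ there is an associated bivariate Laurent polynomial ($g_z$ if $a \in \R_q$, $h_z$ if $a \in \N_q$) of which $(a,b)$ must be a root. So the strategy is concrete and finite: enumerate all Type Two sequences of length $2c = 4$, write down the corresponding polynomials, and show that their vanishing forces one of conditions $(i)$ or $(ii)$.

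The plan is to proceed as follows. First I would fix $c=2$ and enumerate all $z \in \{-1,1\}^{4}$, discarding the even periodic ones (by \lref{l:eperiod} these are never Type One, but more importantly here I must identify which remaining sequences are Type Two for the relevant value of $t(z,\alpha)$). Recall $t(z,\alpha) = u^+(0,2c-1)$ when $a \in \R_q$ and $= v^+(0,2c-1)$ when $a \in \N_q$, and $(z,\alpha)$ is Type Two precisely when $ab^{-1}$ is a $t(z,\alpha)$-th root of unity. Since $\alpha \in \G_2$ means $ab^{-1}$ is not a $k$-th root of unity for $k \in \{1,2\}$, a sequence can only be Type Two when $t(z,\alpha) \notin \{1,2\}$ in absolute value, which for length-$4$ sequences forces $t(z,\alpha)=0$. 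Thus I would single out exactly those sequences $z$ with $u^+(0,3)=0$ (resp.\ $v^+(0,3)=0$), reduce them modulo the rotation relation $\sim$, and for each representative compute the Laurent polynomial $g_z(x,y)=\sum_{k=1}^{4}(-1)^k x^{u^+(k,3)}y^{u^-(k,3)}$ from the proof of \lref{l:t2polys} (and analogously $h_z$ using $v^{\pm}$). Finally I would factor each resulting polynomial over $\F_q$ and read off which of $2-a-b=0$ or $1-2b+ba^{-1}=0$ (equivalently $a-2ab+b=0$ after clearing $a^{-1}$) it produces, using the standing constraints $1 \notin \{a,b\}$ and $\{ab,(a-1)(b-1)\}\subseteq\R_q$ to discard spurious factors.

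I expect the main obstacle to be purely bookkeeping rather than conceptual: correctly computing the exponents $u^+(k,3)$, $u^-(k,3)$, $v^+(k,3)$, $v^-(k,3)$ for each of the handful of surviving sequences, and then recognising the factored polynomials as the two listed equations after normalising by monomial factors (a Laurent polynomial may be multiplied by a unit $x^i y^j$ without changing its roots in $\F_q^*$). A secondary subtlety is ensuring completeness: I must verify that \emph{every} Type Two length-$4$ sequence has been accounted for and that each contributes only the equations $(i)$ or $(ii)$, so that no other necessary condition is missing. Since the statement is only a necessary condition, I do not need to check that roots of $(i)$ or $(ii)$ actually yield transpositions; it suffices that the polynomial-vanishing conditions collapse, after simplification, to exactly these two equations.
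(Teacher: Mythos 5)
Your plan is exactly the paper's argument: the paper proves \lref{l:n2t2} precisely by listing the sequences in $Y_{2,1}/{\sim}$ and $Y_{2,2}/{\sim}$, forming the Laurent polynomials $g_z$ and $h_z$ from \lref{l:t2polys}, and reading off the two equations (your observation that membership in $\G_2$ forces $t(z,\alpha)=0$ for a length-$4$ Type Two sequence is the correct way to identify those sets, and the resulting polynomials do reduce to $2-a-b$ and $a-2ab+b$ up to unit monomials and the excluded roots $a=1$, $b=1$). One caution: you should not discard the even periodic sequences at the outset --- \lref{l:eperiod} only excludes them from the Type One analysis, whereas $Y_{2,\alpha}$ (and hence \lref{l:t2cycs}) includes them, so sequences such as $(1,1,1,1)$, $(-1,-1,-1,-1)$ and $(\pm1,\mp1,\pm1,\mp1)$ must still be run through the computation; they happen to contribute only the forbidden roots $a=1$ or $b=1$, so the final statement is unaffected, but for the completeness you rightly worry about they cannot be omitted a priori.
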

	
	Checking the solutions of equations $(i)$ and $(ii)$ in \lref{l:n2t2} we obtain the following corollary.
	
	\begin{cor}\label{c:n2t2}
		Let $\alpha = \alpha[a, b] \in \G_2$ with $b \not\in \{2-a, a/(2a-1)\}$. Then $\alpha$ does not contain a Type Two transposition.
	\end{cor}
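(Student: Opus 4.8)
The plan is to obtain \cyref{c:n2t2} as the contrapositive of \lref{l:n2t2}, the only remaining work being to identify explicitly the solution sets of equations $(i)$ and $(ii)$ of that lemma. So I would begin by assuming, towards a contradiction, that $\alpha = \alpha[a, b] \in \G_2$ contains a Type Two transposition. By \lref{l:n2t2} the pair $(a, b)$ must then satisfy either $2 - a - b = 0$ or $1 - 2b + ba^{-1} = 0$.

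Next I would solve each equation for $b$ as a function of $a$. The first equation gives immediately $b = 2 - a$. For the second, I note that $a \neq 0$: since $(a, b)$ is valid we have $ab \in \R_q \subseteq \F_q^*$, so both $a$ and $b$ are nonzero and $a^{-1}$ is defined. Multiplying $1 - 2b + ba^{-1} = 0$ through by $a$ yields $a - 2ab + b = 0$, that is $b(2a - 1) = a$. Hence $b = a/(2a - 1)$ whenever $2a - 1 \neq 0$; and when $2a - 1 = 0$ the equation reduces to $a = 0$, which is impossible, so in that degenerate case equation $(ii)$ admits no solution at all.

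In either case the set of values of $b$ compatible with a Type Two transposition is contained in $\{2 - a,\, a/(2a - 1)\}$. Therefore, if $b \not\in \{2 - a,\, a/(2a - 1)\}$, then $(a, b)$ satisfies neither equation, contradicting \lref{l:n2t2}. This contradiction shows that $\alpha$ contains no Type Two transposition, as claimed. I do not anticipate any real obstacle here: the argument is a direct logical inversion of \lref{l:n2t2} together with two elementary manipulations in $\F_q$, the only points requiring a word of care being the invertibility of $a$ (guaranteed by validity) and the degenerate subcase $2a = 1$ of equation $(ii)$, in which the purported solution $a/(2a - 1)$ is undefined but where $(ii)$ is in any event unsatisfiable.
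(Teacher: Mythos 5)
Your proposal is correct and matches the paper's approach exactly: the paper obtains this corollary by ``checking the solutions of equations $(i)$ and $(ii)$'' of \lref{l:n2t2}, which is precisely your contrapositive argument identifying the solution sets as $b = 2-a$ and $b = a/(2a-1)$. Your extra care with the invertibility of $a$ and the degenerate subcase $2a=1$ is sound and only makes explicit what the paper leaves implicit.
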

	
	We will now find conditions for a permutation $\alpha \in \G_2$ to satisfy $\alpha^2(0)=0$ or $\alpha^2(a)=a$. 
	
	\begin{lem}\label{l:n2alph0}
		Let $\alpha = \alpha[a, b] \in \G_2$ with $b \not\in \{2-a, a/(2a-1)\}$. Then $\alpha_0$ is not a transposition.
	\end{lem}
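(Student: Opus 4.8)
The plan is to compute $\alpha^2(0)$ directly using the formulas from \lref{l:junalphr} or \lref{l:junalphnr}, and show that if $\alpha_0$ were a transposition then $(a,b)$ would be forced to satisfy one of the excluded equations $2-a-b=0$ or $1-2b+ba^{-1}=0$. Since $\alpha_0$ is the cycle of $\alpha$ containing $0$, the condition that $\alpha_0$ is a transposition is simply $\alpha^2(0)=0$ together with $\alpha(0)\neq 0$. So first I would compute $\alpha(0)$ and then $\alpha^2(0)$ explicitly. Recall that $\alpha(j)=\varphi(\varphi^{-1}(j)-1)+1$; taking $j=0$, we have $\varphi^{-1}(0)=0$, so $\alpha(0)=\varphi(-1)+1$. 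Since $-1\in\N_q$ (as $q\equiv 3\bmod 4$) or $-1\in\R_q$ (as $q\equiv 1\bmod 4$), the value $\varphi(-1)$ equals $-b$ or $-a$ respectively, giving $\alpha(0)=1-b$ or $\alpha(0)=1-a$.

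Next I would apply $\alpha$ once more to this value. The cleanest way is to branch on the quadratic character of $\alpha(0)=1-b$ (resp.\ $1-a$) and then on the character of $\varphi^{-1}(\alpha(0))-1$, exactly as in the case analysis of \lref{l:a0aapoly}. In each branch, $\alpha^2(0)$ is an explicit rational expression in $a$ and $b$, and setting $\alpha^2(0)=0$ yields a polynomial equation. I would then check that each resulting equation either has no valid solution, or reduces (using $\{ab,(a-1)(b-1)\}\subseteq\R_q$ and $1\notin\{a,b\}$) to $2-a-b=0$ or $1-2b+ba^{-1}=0$, both of which are excluded by hypothesis. The constraints on quadratic characters that define each branch must be used to rule out spurious algebraic solutions, so the bookkeeping of which residue conditions accompany each candidate equation is the crux.

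The main obstacle I anticipate is the case distinction: there are two top-level cases ($a\in\R_q$ versus $a\in\N_q$) combined with $q\equiv 1$ or $3\bmod 4$ governing whether $-1$ is a residue, and within each, the second application of $\alpha$ splits into up to four subcases depending on the characters of $\alpha(0)$ and of $\varphi^{-1}(\alpha(0))-1$. Keeping these branches organised, and in each confirming that the equation $\alpha^2(0)=0$ is incompatible with $b\notin\{2-a,\,a/(2a-1)\}$ once the accompanying residue conditions are imposed, is where the real work lies. I expect that many branches collapse quickly because the residue conditions defining the branch are themselves contradictory for a valid pair, so only a few genuinely produce the excluded equations; identifying which branches are vacuous should substantially shorten the argument.
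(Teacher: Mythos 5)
Your proposal follows essentially the same route as the paper: compute $\alpha(0)=\varphi(-1)+1$, branch on the four combinations of $q\bmod 4$ and the residue class of $a$, then split the second application of $\alpha$ into subcases by quadratic character, set $\alpha^2(0)=0$ in each branch, and verify that every resulting equation forces $b\in\{2-a,\,a/(2a-1),\,-a\}$ (the last excluded by $\alpha\in\G_2$) or an outright contradiction such as $b=1$. This matches the paper's proof, which carries out exactly this case analysis for $q\equiv 3\bmod 4$, $a\in\R_q$ and notes the remaining cases are analogous.
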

	\begin{proof}
		We will distinguish four cases, depending on whether $q \equiv 1 \bmod 4$ or $q \equiv 3 \bmod 4$, and whether $a \in \R_q$ or $a \in \N_q$. We will consider the case where $q \equiv 3 \bmod 4$ and $a \in \mathcal{R}_q$. The other cases can be dealt with using similar arguments. Since $-1 \in \mathcal{N}_q$ we have that $\alpha(0) = \varphi(\varphi^{-1}(0)-1)+1 = \varphi(-1)+1 = 1-b$. Hence
		\[
		\alpha^2(0) = \begin{cases}
			2-b-a & \text{if } \{1-b, a^{-1}-a^{-1}b-1\} \subseteq \mathcal{R}_q, \\
			a^{-1}b-a^{-1}b^2-b+1 & \text{if } 1-b \in \mathcal{R}_q \text{ and } a^{-1}-a^{-1}b-1 \in \mathcal{N}_q, \\
			ab^{-1}-2a+1 & \text{if } 1-b \in \mathcal{N}_q \text{ and } b^{-1}-2 \in \mathcal{R}_q, \\
			2-2b & \text{if } \{1-b, b^{-1}-2\} \subseteq \mathcal{N}_q. 
		\end{cases}
		\]
		Suppose that $\alpha_0$ is a transposition. If $\{1-b, a^{-1}-a^{-1}b-1\} \subseteq \R_q$ then $b=2-a$. If $1-b \in R_q$ and $a^{-1}-a^{-1}b-1 \in \mathcal{N}_q$ then $b$ is a root of the polynomial $a^{-1}x^2+x(1-a^{-1})-1 = a^{-1}(x-1)(x+a)$. As $b \neq 1$ we must have $b=-a$ and thus $\alpha \not\in \G_2$. If $1-b \in \N_q$ and $b^{-1}-2 \in \R_q$ then $b=a/(2a-1)$. Finally if $\{1-b, b^{-1}-2\} \subseteq \N_q$ then $b=1$ which is false.
	\end{proof}
	
	\begin{lem}\label{l:n2alpha}
		Let $\alpha = \alpha[a, b] \in \G_2$ with $b \not\in \{2-a, a/(2a-1)\}$. Then $\alpha_a$ is not a transposition.
	\end{lem}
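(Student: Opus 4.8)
The plan is to exploit the explicit description $\alpha(j) = \varphi(\varphi^{-1}(j)-1)+1$ from \sref{ss:quadcyc}, together with the observation that $\alpha$ sends $a$ to an easily computed value. Since $1 \in \R_q$ we have $\varphi(1) = a$, hence $\varphi^{-1}(a) = 1$ and
\[
\alpha(a) = \varphi(\varphi^{-1}(a)-1)+1 = \varphi(0)+1 = 1.
\]
Thus the cycle $\alpha_a$ already contains both $a$ and $1$, so $\alpha_a$ is a transposition precisely when $\alpha^2(a)=a$, that is, when $\alpha(1)=a$. The lemma therefore reduces to proving that $\alpha(1) \neq a$ under the stated hypotheses. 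Note that, in contrast to the value of $\alpha(0)$ computed in \lref{l:n2alph0}, the value $\alpha(a)=1$ does not depend on $q \bmod 4$, so I expect to need only a two-way split according to whether $a \in \R_q$ or $a \in \N_q$, rather than the usual four cases.

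I would then evaluate $\alpha(1) = \varphi(\varphi^{-1}(1)-1)+1$ in each case. When $a \in \R_q$ we have $\varphi^{-1} = \varphi[a^{-1},b^{-1}]$, so $\varphi^{-1}(1)=a^{-1}$ and the task becomes evaluating $\varphi(a^{-1}-1)$; when $a \in \N_q$, validity forces $b \in \N_q$ and $\varphi^{-1} = \varphi[b^{-1},a^{-1}]$, so $\varphi^{-1}(1)=b^{-1}$ and we evaluate $\varphi(b^{-1}-1)$. Since $\varphi$ multiplies its argument by $a$ or $b$ according to its quadratic character, each case splits into two sub-cases governed by $\eta(a^{-1}-1)=\eta(1-a)$ (respectively $\eta(b^{-1}-1)=-\eta(1-b)$, using $b \in \N_q$). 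In each sub-case the equation $\alpha(1)=a$ is linear in $b$ and a short computation solves it: the four branches produce exactly $a=1$, $b=-a$, $b=a/(2a-1)$ and $b=2-a$ (together with the degenerate $b=1$). The first is excluded because $1 \notin \{a,b\}$; the second gives $ab^{-1}=-1$, a second root of unity, so $\alpha \notin \G_2$; and the remaining two are precisely the values ruled out by the hypothesis $b \not\in \{2-a, a/(2a-1)\}$. Hence $\alpha(1) \neq a$ in every case, as required.

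The argument is largely routine computation, so I do not anticipate a serious obstacle; the one genuine idea is the opening reduction, that $\alpha(a)=1$ turns the question of whether $a$ lies in a transposition into the single scalar equation $\alpha(1)=a$. The point requiring the most care is the bookkeeping of which quadratic residues select which branch of $\varphi$, and verifying that each branch yields a forbidden or degenerate value of $b$. I expect the excluded values $2-a$ and $a/(2a-1)$ to arise only in the $a \in \N_q$ case, mirroring \lref{l:n2alph0}, so that is where the hypothesis is actually used, whereas the $a \in \R_q$ case is ruled out purely by $\G_2$-membership.
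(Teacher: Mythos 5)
Your proposal is correct and follows essentially the same route as the paper: compute $\alpha(a)=\varphi(0)+1=1$, then evaluate $\alpha(1)=\varphi(\varphi^{-1}(1)-1)+1$ by cases on $\eta(a)$ and on the quadratic character of $a^{-1}-1$ (resp.\ $b^{-1}-1$), and check that each resulting linear equation in $b$ forces $a=1$, $b=-a$ (excluded by $\G_2$-membership since $ab^{-1}=-1$), or $b\in\{2-a,a/(2a-1)\}$. The only quibble is the spurious mention of a degenerate solution $b=1$, which does not actually arise in any of the four branches and is harmless.
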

	\begin{proof}
		We will first prove the claim assuming that $a \in \mathcal{R}_q$. We have that $\alpha(a) = \varphi(\varphi^{-1}(a)-1)+1 = \varphi(0)+1 = 1$. Hence
		\[
		\alpha^2(a) = \varphi(a^{-1}-1)+1 =  \begin{cases}
			2-a & \text{if } a^{-1}-1 \in \mathcal{R}_q, \\
			a^{-1}b-b+1 & \text{if } a^{-1}-1 \in \mathcal{N}_q.
		\end{cases}
		\]
		If $\alpha_a$ is a transposition then either $a=1$ or $b = (a-1)/(a^{-1}-1) = -a$, both of which are false. Using similar arguments we can show that if $a \in \N_q$ and $\alpha^2(a) = a$ then $b \in \{2-a, a/(2a-1)\}$.
	\end{proof}
	
	By combining \lref{l:t1}, \cyref{c:n2t2}, \lref{l:n2alph0} and \lref{l:n2alpha} we have completely classified when a permutation $\alpha[a, b] \in \G$ with $b \not\in \{a, -a, 2-a, a/(2a-1)\}$ contains a transposition. It is known that quadratic Latin squares of the form $\mathcal{L}[a, a]$ are isotopic to the Cayley table of the additive group $(\F_q, +)$. Therefore when $b=a$ the square $\mathcal{L}[a, b]$ does not contain a transposition. So it remains to deal with the permutations $\alpha[a, b] \in \G$ with $b \in \{-a, 2-a, a/(2a-1)\}$. If $q \equiv 3 \bmod 4$ then such permutations are not well defined. As $-1 \in \mathcal{N}_q$ we have $-a^2 \in \mathcal{N}_q$ hence $(a, -a)$ is not valid. As $(2-a-1)(a-1) = -(a-1)^2 \in \mathcal{N}_q$ the pair $(a, 2-a)$ is also not valid. Finally we note that if $(a, a/(2a-1))$ is a valid pair then we must have both $a^2/(2a-1) \in \mathcal{R}_q$ and $-(a-1)^2/(2a-1) \in \mathcal{R}_q$ and clearly both cannot be true. So to complete our classification of the permutations in $\G$ which contain a transposition we must now consider the permutations $\alpha[a, -a]$, $\alpha[a, 2-a]$ and $\alpha[a, a/(2a-1)]$ in the case where $q \equiv 1 \bmod 4$. We will in fact show that almost all of these permutations contain a transposition.
	
	\begin{lem}\label{l:2-a}
		Suppose that $q \equiv 1 \bmod 4$. Let $a \in \F_q$ with $a(2-a) \in \mathcal{R}_q$ and let $\alpha = \alpha[a, 2-a] \in \G$. If there exists some $j \in \F_q$ such that $\{aj, a^{-1}j-1\} \subseteq \R_q$ and $\{a(j-a+1), a(j-1)\} \subseteq \N_q$ then $\alpha^2(j) = j$.
	\end{lem}
	\begin{proof}
		We have that
		\[
		\begin{aligned}
			\alpha^2(j) &= \varphi(\varphi^{-1}(\varphi(\varphi^{-1}(j)-1)+1)-1)+1 \\
			&= \varphi(\varphi^{-1}(\varphi(a^{-1}j-1)+1)-1)+1 \\
			&= \varphi(\varphi^{-1}(j-a+1)-1)+1 \\
			&= \varphi((j-1)/(2-a))+1 \\
			&= j,
		\end{aligned}
		\]
		as required.
	\end{proof}
	
	Using analogous arguments we can show the following lemmas.
	
	\begin{lem}\label{l:a/2a-1}
		Suppose that $q \equiv 1 \bmod 4$. Let $a \in \F_q$ with $2a-1 \in \mathcal{R}_q$ and let $\alpha = \alpha[a, a/(2a-1)] \in \G$. If there exists some $j \in \F_q$ such that $\{aj, a(j-1)\} \subseteq \R_q$ and $\{a^{-1}j-1, a(j+a-1)\} \subseteq \N_q$ then $\alpha^2(j) = j$.
	\end{lem}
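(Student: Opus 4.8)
The plan is to mimic the proof of \lref{l:2-a}, replacing $2-a$ by $b = a/(2a-1)$ and evaluating the composition $\alpha^2(j) = \varphi(\varphi^{-1}(\varphi(\varphi^{-1}(j)-1)+1)-1)+1$ one layer at a time, where $\varphi = \varphi[a, b]$. Recall from \sref{ss:quadcyc} that $\varphi(x) = ax$ for $x \in \R_q$ and $\varphi(x) = bx$ for $x \in \N_q$, and that $\varphi^{-1}$ is again a map of this form built from $a^{-1}, b^{-1}$. The whole purpose of the four hypotheses on $j$ is that each one pins down which of the two linear branches is taken at one of the four evaluations of $\varphi$ or $\varphi^{-1}$ appearing in $\alpha^2(j)$, so that the composition becomes an explicit chain of linear substitutions.

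First I would record the character bookkeeping that makes these branches unambiguous. Since $2a-1 \in \R_q$ we have $\eta(b) = \eta(a)\eta(2a-1) = \eta(a)$, so the pair is valid; and by examining the two cases $a \in \R_q$ and $a \in \N_q$ one checks that $\varphi^{-1}(y) = a^{-1}y$ exactly when $\eta(y) = \eta(a)$, while $\varphi^{-1}(y) = b^{-1}y$ exactly when $\eta(y) = -\eta(a)$. The fact that $\eta(2a-1) = 1$ also lets me ignore the factor $2a-1$ when computing the quadratic character of the intermediate values, which is what makes the middle branch determination work.

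Then I would evaluate the four layers in turn. The hypothesis $aj \in \R_q$ gives $\eta(j) = \eta(a)$, hence $\varphi^{-1}(j) = a^{-1}j$; next $a^{-1}j-1 \in \N_q$ forces $\varphi(a^{-1}j-1) = b(a^{-1}j-1)$, and after substituting $b = a/(2a-1)$ this simplifies to $\alpha(j) = (j+a-1)/(2a-1)$. For the outer application, $a(j+a-1) \in \N_q$ together with $\eta(2a-1) = 1$ gives $\eta(\alpha(j)) = -\eta(a)$, so $\varphi^{-1}(\alpha(j)) = b^{-1}\alpha(j) = (j+a-1)/a$, whence $\varphi^{-1}(\alpha(j)) - 1 = (j-1)/a$; finally $a(j-1) \in \R_q$ shows $(j-1)/a \in \R_q$, so $\varphi((j-1)/a) = j-1$ and $\alpha^2(j) = j$.

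The only real work is the character bookkeeping in the third layer: one must check that the hypothesis $a(j+a-1) \in \N_q$, which is phrased in terms of $j+a-1$, really does determine the quadratic character of the intermediate value $\alpha(j) = (j+a-1)/(2a-1)$, and this is exactly where $2a-1 \in \R_q$ is needed. Once the four branches are correctly identified, the four linear substitutions telescope and everything reduces to $j$ with no further cancellation subtleties.
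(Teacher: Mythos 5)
Your proposal is correct and is exactly the argument the paper intends: the paper derives this lemma by "analogous arguments" to \lref{l:2-a}, namely the same four-layer evaluation of $\alpha^2(j)=\varphi(\varphi^{-1}(\varphi(\varphi^{-1}(j)-1)+1)-1)+1$ with each branch determined by one of the four hypotheses. Your explicit bookkeeping (in particular using $\eta(2a-1)=1$ to transfer the hypothesis on $a(j+a-1)$ to the intermediate value $(j+a-1)/(2a-1)$) checks out, so nothing is missing.
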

	
	\begin{lem}\label{l:-a}
		Suppose that $q \equiv 1 \bmod 4$. Let $a \in \F_q$ with $(a-1)(a+1) \in \mathcal{R}_q$ and let $\alpha = \alpha[a, -a] \in \G$. If there exists some $j \in \F_q$ such that $\{aj, a(j-a-1)\} \subseteq \R_q$ and $\{a^{-1}j-1, a(j-1)\} \subseteq \N_q$ then $\alpha^2(j) = j$.
	\end{lem}
	
	To finish the classification of permutations in $\G$ which contain a transposition we will need some tools. For convenience, if $f$ is a Laurent polynomial over $\F_q$ with a pole at $0$, then we will say that $f(0) = \infty$. We will then define $\eta(\infty) = 0$. The following~\cite{MR27006} is a version of the Weil bound.
	
	\begin{thm}\label{t:weil}
		Let $f$ be a monic Laurent polynomial over $\F_q$ of total degree $d$. If $f$ is not the square of a Laurent polynomial then for every $e \in \F_q$ we have
		\begin{equation}\label{e:charsum}
			\left\vert \displaystyle\sum_{x \in \F_q} \eta(ef(x)) \right\vert \leq (d - 1)q^{1/2}.
		\end{equation}
	\end{thm}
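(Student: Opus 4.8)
The cited reference \cite{MR27006} is Weil's theorem on exponential sums, so the plan is not to reprove the Riemann hypothesis for curves but to reduce \eref{e:charsum} to the classical bound for the quadratic-character sum of an ordinary polynomial. First I would dispose of the degenerate case $e = 0$: then $\eta(ef(x)) = \eta(0) = 0$ for every $x \in \F_q$, so the sum vanishes and \eref{e:charsum} holds trivially. For $e \neq 0$, the complete multiplicativity of $\eta$ on $\F_q^*$ together with $|\eta(e)| = 1$ lets me write $\eta(ef(x)) = \eta(e)\,\eta(f(x))$ whenever $f(x) \notin \{0, \infty\}$, so it is enough to bound $\bigl|\sum_{x \in \F_q}\eta(f(x))\bigr|$.

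The substantive step is to clear the pole of $f$ at the origin. Writing $f(x) = x^{-m}g(x)$ with $g \in \F_q[x]$ and $g(0) \neq 0$, the definition of total degree gives $\deg g = d$. For $x \neq 0$ multiplicativity gives $\eta(f(x)) = \eta(x)^{-m}\eta(g(x))$, and since $\eta(x)^2 = 1$ I may replace the factor $x^{-m}$ inside $\eta$ by $x^{m \bmod 2}$; this rewrites the sum as $\sum_{x \in \F_q}\eta(P(x))$ for an ordinary polynomial $P$, equal to $g$ when $m$ is even and to $xg$ when $m$ is odd, the single point $x = 0$ contributing $0$ to both sides by the conventions $\eta(\infty) = \eta(0) = 0$. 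The assumptions that $f$ is monic and not the square of a Laurent polynomial then translate into the statement that $P$ is not a constant multiple of the square of a polynomial, which is exactly the hypothesis under which Weil's bound applies; feeding $P$ into the classical polynomial form of \cite{MR27006} yields a bound of the shape \eref{e:charsum}.

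The one place genuine care is needed — and the main obstacle — is precisely this bookkeeping around the pole: pulling $x^{-m}$ through $\eta$, tracking the parity of $m$ so that the degree of the resulting polynomial $P$ (which is $d$ when $m$ is even but $d+1$ when $m$ is odd) is controlled correctly against the total degree $d$, verifying that the non-square condition survives the passage from $f$ to $P$, and accounting for the single excluded point $x = 0$. When $f$ has no pole these issues disappear entirely and \eref{e:charsum} is literally the classical Weil bound; it is only the genuinely Laurent case that forces one to carry the constant $e$ and the factor $\eta(x)^{-m}$ through the reduction without losing hold of the degree.
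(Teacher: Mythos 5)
The paper does not actually prove Theorem~\ref{t:weil}; it quotes it from~\cite{MR27006}, so there is no internal proof to compare against and your proposal must be judged on its own. Your reduction strategy is the natural one: dispose of $e=0$, pull $\eta(e)$ out by multiplicativity, write $f(x)=x^{-m}g(x)$ with $g(0)\neq 0$ and $\deg g=d$, and replace $\eta(x)^{-m}$ by $\eta\!\left(x^{m\bmod 2}\right)$ so that the sum becomes $\sum_{x}\eta(P(x))$ for the ordinary polynomial $P=x^{m\bmod 2}g$. The translation of ``monic and not a square of a Laurent polynomial'' into the non-square hypothesis of the classical polynomial bound is also handled correctly (for odd $m$ it is automatic, since $x$ divides $P$ exactly once).

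The gap is precisely the step you flag as ``the main obstacle'' and then wave through: feeding $P$ into the classical bound does \emph{not} return $(d-1)q^{1/2}$. When $m$ is odd you have $\deg P=d+1$, so the polynomial Weil bound gives $dq^{1/2}$; when $m$ is even and nonzero, $P(0)=g(0)\neq 0$ contributes $\pm 1$ to $\sum_x\eta(P(x))$ while $x=0$ contributes $0$ to $\sum_x\eta(ef(x))$, costing an additive $1$. Neither loss can be recovered by more careful bookkeeping, because the inequality as literally stated (with the paper's definition of total degree) fails for genuinely Laurent $f$: take $f(x)=x^{-1}+1$, which is monic, has total degree $d=1$, and is not the square of a Laurent polynomial since its lowest-order term has odd exponent. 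For $x\neq 0$ one has $\eta(ef(x))=\eta\!\left(e(x^2+x)\right)$, so by Theorem~\ref{t:quadweil} the full sum equals $-\eta(e)$, of absolute value $1$, whereas $(d-1)q^{1/2}=0$. Your argument therefore proves the bound $dq^{1/2}+1$ in general, and $(d-1)q^{1/2}$ verbatim only when $f$ is an ordinary polynomial with $f(0)\neq 0$ --- which happens to be the only case the paper's explicit applications (Lemma~\ref{l:translargeq}, Lemma~\ref{l:numn2}, Theorem~\ref{t:quadneg}) use --- but the stated constant cannot be reached by this route for odd pole order, nor by any other.
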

	
	In the special case where $f$ is a quadratic polynomial with non-zero discriminant, the following result~\cite{MR1429394} gives an explicit value for the sum in \eref{e:charsum}.
	
	\begin{thm}\label{t:quadweil}
		Let $f \in \F_q[x]$ be a monic, quadratic polynomial with non-zero discriminant. For every $e \in \F_q$ we have
		\[
		\sum_{x \in \F_q} \eta(ef(x)) = -\eta(e).
		\]
	\end{thm}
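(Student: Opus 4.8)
The plan is to reduce the sum to a standard shape by completing the square and then to evaluate it by counting solutions of $y^2 = u$, so that the unknown quadratic character sum collapses to a linear one.

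First I would dispose of the degenerate case: if $e = 0$ then $ef(x) = 0$ for every $x$, so the left-hand sum is identically $0$, which equals $-\eta(0) = -\eta(e)$; hence I may assume $e \neq 0$. Writing $f(x) = x^2 + bx + c$ with discriminant $\Delta = b^2 - 4c \neq 0$, and using that $2$ is invertible in $\F_q$ because $q$ is odd, I complete the square as $f(x) = (x + b/2)^2 - \Delta/4$. The substitution $y = x + b/2$ is a bijection of $\F_q$, so the sum becomes $\sum_{y \in \F_q} \eta(ey^2 + \beta)$ with $\beta = -e\Delta/4$. The two hypotheses $e \neq 0$ and $\Delta \neq 0$ together force $\beta \neq 0$, which is exactly what the remainder of the argument needs; I would note in passing that if $\Delta$ were zero then $f$ would be a perfect square and the sum would instead equal $(q-1)\eta(e)$, so the non-zero discriminant is genuinely essential.

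Next I would pass from summing over $y$ to summing over $u = y^2$. Since the number of $y \in \F_q$ with $y^2 = u$ is $1 + \eta(u)$ for every $u$ (using $\eta(0) = 0$), the sum rewrites as $\sum_{u \in \F_q}(1 + \eta(u))\eta(eu + \beta)$, which splits as $\sum_{u}\eta(eu + \beta) + \sum_{u}\eta(u)\eta(eu + \beta)$. Because $e \neq 0$, the map $u \mapsto eu + \beta$ is a bijection of $\F_q$, so the first summand equals $\sum_{w \in \F_q}\eta(w) = 0$.

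The heart of the argument is the second sum $\sum_{u \neq 0}\eta(u)\eta(eu + \beta)$, where the $u = 0$ term drops out. Here I would use multiplicativity of $\eta$ together with $\eta(u^2) = 1$ for $u \neq 0$ to factor $\eta(u)\eta(eu+\beta) = \eta\!\left(u(eu+\beta)\right) = \eta\!\left(u^2(e + \beta u^{-1})\right) = \eta(e + \beta u^{-1})$. Substituting $v = u^{-1}$, which permutes $\F_q^*$, turns this into $\sum_{v \neq 0}\eta(e + \beta v)$; since $\beta \neq 0$ the full sum $\sum_{v \in \F_q}\eta(e + \beta v)$ vanishes, so deleting the $v = 0$ term leaves exactly $-\eta(e)$. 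Combining the two pieces yields the asserted value $-\eta(e)$. The main obstacle is spotting the right factorisation $\eta(u)\eta(eu+\beta) = \eta(e + \beta u^{-1})$ and the inversion $v = u^{-1}$ that converts an intractable quadratic character sum into a linear one; all the rest is bookkeeping, and the non-vanishing of $\beta$ (equivalently, the non-zero discriminant) is invoked precisely at the two points where a linear substitution is claimed to be a bijection.
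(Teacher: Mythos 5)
Your proof is correct. Note that the paper does not prove this statement at all: it is quoted as a known result with a citation (it is the classical evaluation of $\sum_x\eta(ax^2+bx+c)$ for non-degenerate quadratics, as in Lidl--Niederreiter), so there is no argument in the paper to compare against. Your self-contained derivation checks out at every step: the reduction to $\sum_y\eta(ey^2+\beta)$ with $\beta=-e\Delta/4\neq 0$ by completing the square, the conversion to a sum over $u=y^2$ weighted by $1+\eta(u)$, the vanishing of $\sum_u\eta(eu+\beta)$, and the key manipulation $\eta(u)\eta(eu+\beta)=\eta\bigl(u^2(e+\beta u^{-1})\bigr)=\eta(e+\beta u^{-1})$ followed by the inversion $v=u^{-1}$, which turns the remaining sum into $\sum_{v\neq 0}\eta(e+\beta v)=-\eta(e)$. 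You also correctly isolate the two places where $\beta\neq 0$ (equivalently $\Delta\neq 0$) is needed, and your aside that the sum becomes $(q-1)\eta(e)$ when $\Delta=0$ is accurate.
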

	
	We can use \tref{t:weil} and \tref{t:quadweil} to prove the following result.
	
	\begin{lem}\label{l:translargeq}
		Suppose that $193 \leq q \equiv 1 \bmod 4$. Then every permutation in the set $\{\alpha[a, b] \in \G : b \in \{2-a, a/(2a-1), -a\}\}$ contains a transposition.
	\end{lem}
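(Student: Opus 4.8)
The plan is to show that for each of the three families $b \in \{2-a, a/(2a-1), -a\}$, the sufficient condition for a transposition given in \lref{l:2-a}, \lref{l:a/2a-1} and \lref{l:-a} is met by at least one $j \in \F_q$, provided $q$ is large enough. Each of those lemmas asks for an element $j$ at which four specified Laurent polynomials in $j$ (all linear or quadratic in $j$, with coefficients depending on $a$) have prescribed quadratic characters: two must be residues and two must be non-residues. So the core task is a standard character-sum estimate: I want to count the number of $j \in \F_q$ that simultaneously satisfy four sign conditions $\eta(f_\ell(j)) = \varepsilon_\ell$, and show this count is positive.

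First I would fix one family, say $b = 2-a$, and write down the four polynomials $f_1(j) = aj$, $f_2(j) = a^{-1}j - 1$, $f_3(j) = a(j-a+1)$, $f_4(j) = a(j-1)$ from \lref{l:2-a}, with target signs $(+,+,-,-)$. The number of $j$ with the prescribed signs is
\[
N = \sum_{j \in \F_q} \prod_{\ell=1}^{4} \frac{1 + \varepsilon_\ell\, \eta(f_\ell(j))}{2},
\]
where one must handle separately the few $j$ at which some $f_\ell(j) = 0$ (a bounded number of exceptional points, say at most $4$). Expanding the product gives $2^{-4}$ times a sum of $2^4$ character sums, indexed by subsets $S \subseteq \{1,2,3,4\}$: the main term from $S = \emptyset$ is $q$, and every other term is a sum $\sum_j \eta\big(e \prod_{\ell \in S} f_\ell(j)\big)$ of the shape controlled by \tref{t:weil}. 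The product $\prod_{\ell \in S} f_\ell$ has total degree $|S| \leq 4$ and, because the four linear/quadratic factors are pairwise non-proportional (which I would check for each family, using $a \neq 1$ and the validity constraints), it is squarefree and in particular not a perfect square whenever $S \neq \emptyset$. Hence \tref{t:weil} bounds each such sum by $(|S|-1)q^{1/2} \leq 3q^{1/2}$, and after normalising to monic form the total error is at most $\tfrac{1}{16}\big(\sum_{\emptyset \neq S}(|S|-1)\big)q^{1/2}$, a fixed constant times $q^{1/2}$. Subtracting the bounded number of exceptional $j$, I get $N \geq q/16 - C q^{1/2} - O(1)$ for an explicit small constant $C$, which is positive once $q$ is large; the threshold $q \geq 193$ should be exactly what makes $q/16$ dominate after plugging in the sharp constants for degree-$4$ products.

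The same computation runs for the families $b = a/(2a-1)$ and $b = -a$ using \lref{l:a/2a-1} and \lref{l:-a}, with their own four polynomials but an identical degree count, so all three cases fold into one estimate. I would also sharpen the bound where possible by invoking \tref{t:quadweil}: any factor $f_\ell$ that is itself quadratic with non-zero discriminant contributes an exact value $-\eta(e)$ rather than the cruder Weil bound, which tightens the error constant and is likely what brings the threshold down to $193$. The main obstacle I anticipate is purely bookkeeping rather than conceptual: verifying for each family that the relevant products $\prod_{\ell \in S} f_\ell(j)$ are squarefree (no accidental coincidences among the factors forcing a square, which would invalidate \tref{t:weil}), and that the handful of exceptional zeros and the cases $a \in \{0, \pm 1\}$ are correctly excluded by the standing hypotheses ($q \equiv 1 \bmod 4$ and the stated residue conditions on $a$). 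Once the squarefree-ness is confirmed uniformly, the character-sum inequality closes the argument and shows every such permutation contains a transposition for $q \geq 193$.
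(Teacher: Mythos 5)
Your proposal is correct and follows essentially the same route as the paper: for each of the three families it applies the corresponding sufficient condition (\lref{l:2-a}, \lref{l:a/2a-1}, \lref{l:-a}), counts the admissible $j$ via the indicator product $\prod_\ell(1+\varepsilon_\ell\eta(f_\ell(j)))$, bounds the nontrivial character sums with \tref{t:weil} and \tref{t:quadweil} after checking the four linear factors have distinct roots, and absorbs the finitely many exceptional zeros to get a positive count for $q\geq 193$. This matches the paper's proof in both structure and the resulting constants.
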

	\begin{proof}
		We will prove that every permutation of the form $\alpha[a, 2-a]$ in $\G$ contains a transposition. The remaining claims can be proven using similar arguments.
		
		Let $a \in \F_q$ such that $a(2-a) \in \R_q$. Define 
		\[
		V_a = \{j \in \F_q : \{aj, a^{-1}j-1\} \subseteq \R_q, \{a(j-a+1), a(j-1)\} \subseteq \N_q\}.
		\]
		By \lref{l:2-a}, if $V_a \neq \emptyset$ then $\alpha[a, 2-a]$ contains a transposition. Define 
		\[
		Q(x) = (1+\eta(ax))(1+\eta(a^{-1}x-1))(1-\eta(a(x-a+1)))(1-\eta(a(x-1))).
		\]
		If $x \in V_a$ then $Q(x) = 16$. If $x \in \{0, 1, a, a-1\}$ then $Q(x) \leq 8$. If $x \in \F_q \setminus (V_a \cup \{0, 1, a, a-1\})$ then $Q(x) = 0$. Let $S = \sum_{x \in \F_q} Q(x)$. Then $S \leq 16|V_a| + 32$. Expanding $Q(x)$ and using the fact that $\eta$ is a homomorphism on $\F_q^*$ we can write $S$ as a sum of terms of the form $\sum_{x \in \F_q} \eta(\pm K(x))$ where $K$ is the product of $k$ distinct factors in $\{ax, a^{-1}x-1, a(x-a+1), a(x-1)\}$ for some $k \in \{0, 1, 2, 3, 4\}$. Note that the roots of these factors are distinct because $a \neq 2$. For each $k \in \{1, 2, 3, 4\}$ there are $\binom{4}{k}$ terms $K$ of degree $k$, and \tref{t:weil} or \tref{t:quadweil} applies to each such term. Using these theorems we obtain the bound	
		$S \geq q-11q^{1/2}-6$. As $S \leq 16|V_a| + 32$ it follows that $|V_a| \geq (q-11q^{1/2}-38)/16$, which is positive if $q \geq 193$.
	\end{proof}
	
	We will use \tref{t:weil} and \tref{t:quadweil} in this way many times throughout the paper. 
	To finish the classification of permutations in $\G$ which contain a transposition we used a computer search.
	
	\begin{lem}\label{l:3mod4notrans}
		Suppose that $q \equiv 3 \bmod 4$. The permutation $\alpha[a, b] \in \G$ contains a transposition if and only if
		\[
		(2ab-a-b)(a+b)(a-1) \in \R_q \text{ and } \{2(a+b-2)(a-1), 2a(a+b)\} \subseteq \mathcal{N}_q.
		\]
	\end{lem}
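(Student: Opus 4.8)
The plan is to use the fact that every $2$-cycle of $\alpha = \alpha[a,b]$ falls into exactly one of the four categories classified in \sref{ss:quadcyc}: it is one of the distinguished cycles $\alpha_0$, $\alpha_a$, a Type One cycle, or a Type Two cycle. Hence $\alpha$ contains a transposition precisely when at least one of these four possibilities yields a cycle of length two, and I would assemble the conclusion from \lref{l:t1}, \cyref{c:n2t2}, \lref{l:n2alph0} and \lref{l:n2alpha}. First I would split on whether $\alpha \in \G_2$, recalling that $\G \setminus \G_2$ consists exactly of the permutations $\alpha[a,a]$ and $\alpha[a,-a]$.

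For the exceptional permutations I would exploit the hypothesis $q \equiv 3 \bmod 4$, which gives $-1 \in \N_q$. This at once eliminates $\alpha[a,-a]$, since $ab = -a^2 \in \N_q$ shows $(a,-a)$ is not a valid pair and so $\alpha[a,-a] \notin \G$. For $\alpha[a,a]$, the square $\mathcal{L}[a,a]$ is isotopic to the Cayley table of $(\F_q,+)$, whose additive group has odd exponent $p \geq 3$, so every nontrivial row permutation is a product of $p$-cycles and contains no transposition; thus $\alpha[a,a]$ has no $2$-cycle. I would then verify that the right-hand condition also fails when $b=a$: substituting gives $2(a+b-2)(a-1) = 4(a-1)^2$ and $2a(a+b) = 4a^2$, both nonzero squares lying in $\R_q$, so neither can lie in $\N_q$. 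Hence for $b=a$ both sides of the stated equivalence are false.

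The heart of the argument is the case $\alpha[a,b] \in \G_2$. Here I would first record that, when $q \equiv 3 \bmod 4$, neither $b = 2-a$ nor $b = a/(2a-1)$ can occur for a valid pair: in the first case $(a-1)(b-1) = -(a-1)^2 \in \N_q$, and in the second case the conditions $ab \in \R_q$ and $(a-1)(b-1) \in \R_q$ force the contradictory requirements $2a-1 \in \R_q$ and $2a-1 \in \N_q$. Consequently every valid pair with $\alpha[a,b] \in \G_2$ automatically satisfies $b \notin \{2-a, a/(2a-1)\}$, so the hypotheses of \cyref{c:n2t2}, \lref{l:n2alph0} and \lref{l:n2alpha} are all met. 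These three results show that $\alpha$ has no Type Two transposition and that neither $\alpha_0$ nor $\alpha_a$ is a transposition, leaving a Type One transposition as the only way $\alpha$ can contain a $2$-cycle; applying \lref{l:t1} then yields exactly the stated condition. Combining this with the exceptional case completes the equivalence. The main obstacle is not any single computation but the bookkeeping that confirms the side-conditions $b \notin \{2-a,\, a/(2a-1),\, -a\}$ imposed by the earlier lemmas are vacuously satisfied when $q \equiv 3 \bmod 4$, so that no residual values of $b$ escape the classification.
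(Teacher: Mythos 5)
Your proposal is correct and follows essentially the same route as the paper: the paper also assembles the result from \lref{l:t1}, \cyref{c:n2t2}, \lref{l:n2alph0} and \lref{l:n2alpha}, disposes of $b=a$ via the isotopy with the Cayley table of $(\F_q,+)$, and observes that for $q \equiv 3 \bmod 4$ the pairs $(a,-a)$, $(a,2-a)$ and $(a,a/(2a-1))$ are not valid, so no computer search is needed in this residue class. Your explicit check that the displayed condition also fails when $b=a$ is a small piece of bookkeeping the paper leaves implicit.
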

	
	\begin{lem}\label{l:1mod4notrans}
		Suppose that $q \equiv 1 \bmod 4$. The permutation $\alpha[a, b] \in \G$ contains a transposition if and only if one of the following holds:
		\begin{enumerate}[(i)]
			\item $(2ab-a-b)(a+b)(a-1) \in \R_q \text{ and } \{2(a+b-2)(a-1), 2a(a+b)\} \subseteq \mathcal{N}_q$,
			\item $b=2-a$ and $(q, a) \not\in \{(13, 3), (13, 8), (17, 12), (17, 15), (37, 11), (37, 27), (41, 13), (41, 25)\}$,
			\item $b=a/(2a-1)$ and $(q, a) \not\in \{(13, 2), (13, 9), (17, 5), (17, 8), (37, 11), (37, 27), (41, 23), (41, 26)\}$,
			\item $b=-a$ and $(q, a) \not\in \{(13, 7), (13, 11), (17, 3), (17, 11), (37, 10), (37, 26), (41, 12), (41, 17)\}$.
		\end{enumerate}
	\end{lem}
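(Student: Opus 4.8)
The plan is to reuse the classification of $2$-cycles of a permutation $\alpha[a,b]\in\G$ into Type One transpositions, Type Two transpositions, and the distinguished cycles $\alpha_0$ and $\alpha_a$, and then to treat the exceptional families of $b$ by a combination of \lref{l:translargeq} and a finite computation. I would first dispose of the generic case $b\notin\{a,-a,2-a,a/(2a-1)\}$. Here $\alpha[a,b]\in\G_2$, so \lref{l:t1} characterises Type One transpositions by condition $(i)$, \cyref{c:n2t2} rules out Type Two transpositions, and \lref{l:n2alph0} and \lref{l:n2alpha} show that neither $\alpha_0$ nor $\alpha_a$ is a transposition. Hence every transposition of $\alpha[a,b]$ is a Type One transposition, so $\alpha[a,b]$ contains a transposition precisely when $(i)$ holds; since none of $(ii)$--$(iv)$ can hold for such $b$, this yields the stated equivalence in the generic case.

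Next I would record two degenerate facts. The subcase $b=a$ is handled by the known isotopy of $\mathcal{L}[a,a]$ to the Cayley table of $(\F_q,+)$, which has no intercalate since $q$ is odd; here $(i)$ fails because $2(a+b-2)(a-1)=4(a-1)^2\in\R_q$, while $(ii)$--$(iv)$ do not apply, so both sides are false. For the three families $b\in\{2-a,a/(2a-1),-a\}$, which produce valid pairs only because $q\equiv1\bmod4$, the crucial point is that condition $(i)$ is automatically vacuous: a direct computation gives $a+b-2=0$ when $b=2-a$, $a+b=0$ when $b=-a$, and $2ab-a-b=0$ when $b=a/(2a-1)$, so in each case a defining factor of $(i)$ vanishes. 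A short check shows that, since validity forces $a\notin\{0,1\}$, these three values of $b$ are pairwise distinct, so at most one of $(ii)$--$(iv)$ can hold. Consequently the right-hand side of the claimed equivalence collapses to the single statement that $(q,a)$ avoids the listed exceptional set, and it remains to decide, for each such $b$, exactly when $\alpha[a,b]$ has a $2$-cycle.

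This last question is settled in two ranges. For $193\leq q\equiv1\bmod4$, \lref{l:translargeq} (which rests on the sufficient conditions of \lref{l:2-a}, \lref{l:a/2a-1} and \lref{l:-a}) guarantees a transposition, so no exceptional pairs arise above $193$. For the finitely many orders $q\equiv1\bmod4$ with $q<193$, I would run a computer search: for each such $q$ and each $a$ making the relevant pair valid, test directly whether $\alpha[a,b]$ contains a $2$-cycle and record the pairs $(q,a)$ for which it does not; these are precisely the exceptional lists appearing in $(ii)$--$(iv)$. I expect the only real difficulty to be organisational---confirming exhaustiveness of the split on $b$, the vanishing of $(i)$ in each exceptional family, and the pairwise distinctness of the special $b$-values together with the edge cases---since every genuinely analytic ingredient, in particular the Weil-bound estimate underlying \lref{l:translargeq}, is already established.
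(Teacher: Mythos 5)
Your proposal is correct and follows essentially the same route as the paper: the generic case $b\notin\{a,-a,2-a,a/(2a-1)\}$ is settled by combining \lref{l:t1}, \cyref{c:n2t2}, \lref{l:n2alph0} and \lref{l:n2alpha}; the case $b=a$ by the isotopy to the Cayley table of $(\F_q,+)$; and the three exceptional families by \lref{l:translargeq} for $q\geq 193$ together with a computer search for the remaining small orders. Your additional observations (that a factor of condition $(i)$ vanishes in each exceptional family and that the three special values of $b$ are pairwise distinct) are exactly the bookkeeping the paper leaves implicit.
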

	
	We are now ready to prove \tref{t:n2}.
	
	\begin{proof}[Proof of \tref{t:n2}]
		If $q \equiv 3 \bmod 4$ then the result follows by combining \lref{l:quadrowperms} and \lref{l:3mod4notrans}. Now assume that $q \equiv 1 \bmod 4$. \lref{l:quadrowperms} implies that $\mathcal{L}[a, b]$ contains an intercalate if and only if either $\alpha[a, b]$ contains a transposition or $\alpha[b, a]$ contains a transposition. Since $\{ab, (a-1)(b-1)\} \subseteq \R_q$ for a valid pair $(a, b) \in \F_q^2$, it follows that $(a, b)$ satisfies condition $(i)$ in \lref{l:1mod4notrans} if and only if $(b, a)$ does too. Also note that for each permutation $\alpha[a, b]$ which is an exception in condition $(i)$, $(ii)$ or $(iii)$ in \lref{l:1mod4notrans}, the permutation $\alpha[b, a]$ is not an exception. The result should now be clear.
	\end{proof}
	
	We can also find the number of $N_2$ quadratic Latin squares of order $q$.
	
	\begin{lem}\label{l:numn2}
		The number of $N_2$ quadratic Latin squares of order $q$ is $7q^2/32 + O(q^{3/2})$.
	\end{lem}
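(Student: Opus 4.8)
The plan is to count valid pairs directly. Since $\mathcal{L}[a,b]=\mathcal{L}[a',b']$ forces $\varphi[a,b]=\varphi[a',b']$ and hence $a=a'$ and $b=b'$, the map $(a,b)\mapsto\mathcal{L}[a,b]$ is a bijection from valid pairs onto quadratic Latin squares of order $q$, so it suffices to count valid pairs $(a,b)$. There are $(q-3)(q-5)/4+q-2=q^2/4+O(q)$ of them. A quadratic Latin square is $N_2$ precisely when it contains no transposition, i.e.\ no intercalate, so by \tref{t:n2} the $N_2$ squares correspond to valid pairs that fail the stated intercalate condition. The second clause of \tref{t:n2} (with $q\equiv1\bmod4$ and $b\in\{2-a,\,a/(2a-1),\,-a\}$) involves only $O(q)$ pairs and is absorbed into the error term. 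Hence it remains to show that the number $N$ of valid pairs satisfying
\[
(2ab-a-b)(a+b)(a-1)\in\R_q,\qquad \{2(a+b-2)(a-1),\,2a(a+b)\}\subseteq\N_q
\]
equals $q^2/32+O(q^{3/2})$, after which the count of $N_2$ squares is $q^2/4-q^2/32+O(q^{3/2})=7q^2/32+O(q^{3/2})$.

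To estimate $N$ I would use the quadratic character as an indicator. Label the five arguments $g_1=ab$, $g_2=(a-1)(b-1)$, $g_3=(2ab-a-b)(a+b)(a-1)$, $g_4=2(a+b-2)(a-1)$ and $g_5=2a(a+b)$. The product
\[
\frac{1+\eta(g_1)}{2}\cdot\frac{1+\eta(g_2)}{2}\cdot\frac{1+\eta(g_3)}{2}\cdot\frac{1-\eta(g_4)}{2}\cdot\frac{1-\eta(g_5)}{2}
\]
equals the indicator of ``valid and intercalate'' at every pair $(a,b)$ at which none of $g_1,\dots,g_5$ vanishes; the excluded pairs lie on finitely many curves and number $O(q)$. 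Summing over $\F_q^2$ and expanding the product as $2^{-5}$ times a sum of $32$ terms, the empty-subset term contributes $\tfrac1{32}\sum_{a,b}1=q^2/32$, while each of the remaining $31$ terms is a character sum $\sum_{(a,b)}\eta\!\big(\prod_{i\in S}g_i\big)$ over a nonempty $S\subseteq\{1,2,3,4,5\}$.

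The crux is to show that every such sum is $O(q^{3/2})$. Each $g_i$ is, up to the constant $2$, a product of the irreducible polynomials $a$, $b$, $a-1$, $b-1$, $2ab-a-b$, $a+b$, $a+b-2$, and I would record each $g_i$ as its exponent vector over $\F_2$ in these seven irreducibles. A direct check shows these five vectors are linearly independent; in fact their projections onto the five irreducibles that are non-constant in $b$, namely $b$, $b-1$, $2ab-a-b$, $a+b$ and $a+b-2$, already span a five-dimensional space. Consequently, for every nonempty $S$ the square-free part of $\prod_{i\in S}g_i$ is non-constant in $b$, so this product is never a constant times a perfect square. Fixing $a$ outside the $O(1)$ values at which two of the roots $0,\,1,\,a/(2a-1),\,-a,\,2-a$ coincide or a leading coefficient vanishes, the resulting one-variable polynomial in $b$ is not a square, and \tref{t:weil} (or \tref{t:quadweil} in the quadratic case) bounds the inner sum by $O(q^{1/2})$; summing over $a$ gives $O(q^{3/2})$, and the $O(1)$ degenerate values of $a$ contribute only $O(q)$. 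Therefore $N=q^2/32+O(q^{3/2})$.

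The main obstacle is exactly this uniform non-square verification: one must confirm the $\F_2$-independence of the five exponent vectors, so that no nonempty product collapses to a constant times a square and spuriously contributes a second $q^2$-order term, and then apply the Weil bound through specialisation while controlling the finitely many degenerate values of $a$. Granting this, combining $N=q^2/32+O(q^{3/2})$ with the $O(q)$ contributions from the exceptional clause of \tref{t:n2} and from the vanishing loci of the $g_i$ shows that the number of $N_2$ quadratic Latin squares of order $q$ is $q^2/4-q^2/32+O(q^{3/2})=7q^2/32+O(q^{3/2})$, as claimed.
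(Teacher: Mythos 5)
Your proposal is correct and follows essentially the same route as the paper: both count the intercalate-containing squares by expanding a product of quadratic-character indicators over the five conditions, extract the main term $q^2/32$, and bound the remaining character sums by fixing $a$ and applying \tref{t:weil}/\tref{t:quadweil} to the resulting polynomial in $b$, with the exceptional values of $a$ and the second clause of \tref{t:n2} absorbed into the error term. Your explicit $\F_2$-independence check of the exponent vectors is a welcome elaboration of the non-square verification that the paper leaves implicit in its appeal to the method of \lref{l:translargeq}.
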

	\begin{proof}
		Fix $a \in \F_q \setminus \{-1, 0, 1, 2\}$ and define
		\[
		V_a = \{b \in \F_q : \{ab, (a-1)(b-1), (2ab-a-b)(a+b)(a-1)\} \subseteq \mathcal{R}_q, \{2(a+b-2)(a-1), 2a(a+b)\} \subseteq \mathcal{N}_q\}.
		\]
		If $q \equiv 3 \bmod 4$ then $|V_a|$ is the number of quadratic Latin squares of the form $\L[a, b]$ which contain an intercalate. If $q \equiv 1 \bmod 4$ then the number of squares $\L[a, b]$ which contain an intercalate is $|V_a| + m$ where $m \in \{0, 1, 2, 3\}$. Using \tref{t:weil} and \tref{t:quadweil} in an analogous way as in the proof of \lref{l:translargeq} we can show that
		\[
		\frac1{32}(q-79-58q^{1/2}) \leq |V_a| \leq \frac1{32}(q+79+58q^{1/2}).
		\] 
		The condition that $a \not\in \{-1, 2\}$ is required in order to apply \tref{t:weil} and \tref{t:quadweil} to estimate $|V_a|$. As there are $q-4$ choices for $a \in \F_q \setminus \{-1, 0, 1, 2\}$ it follows that the number of quadratic Latin squares $\mathcal{L}[a, b]$ of order $q$ with $a \not\in \{-1, 2\}$ which contain an intercalate is $q^2/32 + O(q^{3/2})$. Recall that the total number of quadratic Latin squares of order $q$ is $q^2/4 + O(q)$, and there are $O(q)$ quadratic Latin squares of the form $\mathcal{L}[-1, b]$ and $\mathcal{L}[2, b]$. It follows that the number of $N_2$ quadratic Latin squares of order $q$ is $7q^2/32 + O(q^{3/2})$.
	\end{proof}
	
	To conclude this section we describe how to bound the number of permutations in $\G$ which contain a cycle of length $c$, for some $c \in \{2, 3, \ldots, q\}$. Firstly, it is easy to bound the number of permutations in $\G \setminus \G_c$. The comments at the end of \sref{sss:t2} and \sref{sss:a0aa} describe how to bound the number of permutations in $\G_c$ which contain a cycle of length $c$ which is not of Type One. As mentioned at the end of \sref{sss:t1}, \lref{l:cycsseqs} and \lref{l:t1necsuf} give us necessary and sufficient conditions for a permutation in $\G_c$ to contain a Type One cycle of length $c$. To bound the number of permutations which satisfy these conditions we could use \tref{t:weil} and \tref{t:quadweil} in a similar way as used to find the number of $N_2$ quadratic Latin squares. However to apply these theorems in this way we would need to know when products of functions in the set $\{F_{i, z} : i \in \{0, 1, 2, \ldots, 2c-1\}\}$ and products of functions in $\{G_{i, z} : i \in \{0, 1, 2, \ldots, 2c-1\}\}$ are, up to multiplication by a constant, the square of a Laurent polynomial. It seems a difficult task to predict when this occurs.
	
	\section{Row cycles of length $p$ in quadratic Latin squares}\label{ss:p-cycles}
	
	In this section we will prove \tref{t:quadneg}. Throughout this section let $p$ be an odd prime, $d$ a positive integer and $q = p^d$. 
	
	\begin{lem}\label{l:pcycprelim}
		Let $\alpha = \alpha[a, b] \in \G$ with $a \in \R_q$. If there exists $y \in \F_q$ such that $\{y-ja+j, y-(j+1)a+j : j \in \{0, 1, \ldots, p-1\}\} \subseteq \R_q$ then $\alpha$ contains a $p$-cycle.
	\end{lem}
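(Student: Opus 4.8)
The plan is to exhibit an explicit orbit of $\alpha$ of length $p$ directly, rather than invoking the polynomial machinery of \sref{sss:t1}--\sref{sss:t2}. Indeed, since here $c = p$ equals the characteristic, the sequence associated to the orbit I will construct is the all-ones sequence in $\{-1, 1\}^{2p}$, which is even periodic and hence of Type Two by \lref{l:eperiod}; so the Type One conditions do not apply and a hands-on construction is the natural route. This matches the remark at the end of \sref{sss:t2} that the polynomial $g_z$ degenerates precisely when $c$ is the characteristic.

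First I would record how $\alpha$ acts along the ``quadratic residue'' branch. Recall that $\alpha(x) = \varphi(\varphi^{-1}(x)-1)+1$ and that $\varphi^{-1} = \varphi[a^{-1}, b^{-1}]$ since $a \in \R_q$. The key claim is: if $x \in \R_q$ and $x - a \in \R_q$, then $\alpha(x) = x - (a-1)$. To see this, note $\varphi^{-1}(x) = a^{-1}x$ because $x \in \R_q$, so $\varphi^{-1}(x) - 1 = a^{-1}(x-a)$; since $a^{-1} \in \R_q$ and $x - a \in \R_q$ we have $a^{-1}(x-a) \in \R_q$, whence $\varphi(a^{-1}(x-a)) = a \cdot a^{-1}(x-a) = x - a$, and therefore $\alpha(x) = (x-a) + 1 = x - (a-1)$. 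All intermediate values are nonzero since they lie in $\R_q$, so the branch $\varphi(0) = 0$ never interferes.

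Next I would set $x_j = y - j(a-1)$ for $j \in \{0, 1, \ldots, p-1\}$ and observe that $x_j = y - ja + j$ and $x_j - a = y - (j+1)a + j$ are exactly the two families appearing in the hypothesis, so both $x_j \in \R_q$ and $x_j - a \in \R_q$ hold for each such $j$. Applying the key claim gives $\alpha(x_j) = x_{j+1}$ for $j \in \{0, \ldots, p-2\}$, while $\alpha(x_{p-1}) = x_{p-1} - (a-1) = y - p(a-1) = y = x_0$ because $\F_q$ has characteristic $p$.

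Finally, to conclude this orbit is a genuine $p$-cycle I would verify the $x_j$ are pairwise distinct: if $x_j = x_k$ with $0 \leq j < k \leq p-1$ then $(k-j)(a-1) = 0$, which is impossible since $a \neq 1$ (forced by $(a-1)(b-1) \in \R_q$) and $0 < k - j < p$ in characteristic $p$. I expect the only delicate points to be this closing-up step, which relies crucially on $p$ being the characteristic so that $p(a-1) = 0$ returns the orbit to its start, together with the bookkeeping that matches the two hypothesised families with $x_j$ and $x_j - a$; both are routine once set up correctly.
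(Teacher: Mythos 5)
Your proof is correct and follows essentially the same route as the paper's: both iterate $\alpha$ along the residue branch to show $\alpha(y-ja+j)=y-(j+1)a+(j+1)$, with the hypothesis guaranteeing that each value and its $\varphi^{-1}$-shift stay in $\R_q$, and close the orbit using $p(a-1)=0$ in characteristic $p$. Your explicit check that the $p$ points are distinct (via $a\neq 1$) is a small point the paper leaves implicit, but the argument is the same.
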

	\begin{proof}
		We will prove by induction on $k \in \{0, 1, 2, \ldots, p\}$ that $\alpha^k(y) = y-ka+k$. The claim is trivial when $k=0$. Suppose that $\alpha^k(y) = y-ka+k$ for some $k \in \{0, 1, 2, \ldots, p-1\}$. Then $\alpha^k(y) \in \R_q$, hence $\varphi^{-1}(\alpha^k(y))-1 = a^{-1}y-(k+1)+ka^{-1} = a^{-1}(y-(k+1)a+k) \in \R_q$ by assumption. Therefore $\alpha^{k+1}(y) = a(a^{-1}(y-(k+1)a+k))+1 = y-(k+1)a+(k+1)$. The lemma follows.
	\end{proof}
	
	In fact, if the hypotheses of \lref{l:pcycprelim} hold then $\alpha$ satisfies the sequence $z \in \{-1, 1\}^{2p}$ defined by $z_i = 1$ for $i \in \{0, 1, 2, \ldots, 2p-1\}$. Analogous arguments allow us to prove the following.
	
	\begin{lem}\label{l:pcycprelim2}
		Let $\alpha = \alpha[a, b] \in \G$ with $a \in \N_q$. If there exists $y \in \F_q$ such that $\{y-ja+j, y-(j+1)a+j : j \in \{0, 1, \ldots, p-1\}\} \subseteq \N_q$ then $\alpha$ contains a $p$-cycle.
	\end{lem}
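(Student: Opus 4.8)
The plan is to mirror the inductive argument of \lref{l:pcycprelim} almost verbatim, proving by induction on $k \in \{0, 1, \ldots, p\}$ the same closed form $\alpha^k(y) = y - ka + k$. The base case $k = 0$ is immediate, and the hypothesis (first family with $j = 0$) records that $y \in \N_q$. For the inductive step, suppose $\alpha^k(y) = y - ka + k$ for some $k \in \{0, 1, \ldots, p-1\}$; the hypothesis (first family, $j = k$) gives $\alpha^k(y) \in \N_q$, which dictates the branch of $\varphi^{-1}$ to apply.

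The one genuine difference from \lref{l:pcycprelim}, and the step I would treat with the most care, is the branch bookkeeping. Since $a \in \N_q$, we have $\varphi^{-1} = \varphi[b^{-1}, a^{-1}]$ rather than $\varphi[a^{-1}, b^{-1}]$, so applying $\varphi^{-1}$ to the non-residue $\alpha^k(y)$ multiplies by $a^{-1}$, giving $\varphi^{-1}(\alpha^k(y)) - 1 = a^{-1}(y - (k+1)a + k)$, exactly the same expression as in the residue case. The crucial point is that this intermediate value now lies in $\R_q$ rather than $\N_q$: the second family of the hypothesis (with $j = k$) places $y - (k+1)a + k$ in $\N_q$, and multiplying by $a^{-1} \in \N_q$ flips it into $\R_q$. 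Consequently $\varphi = \varphi[a, b]$ acts on it through its residue branch, multiplying by $a$, and we recover $\alpha^{k+1}(y) = \varphi(\varphi^{-1}(\alpha^k(y)) - 1) + 1 = y - (k+1)a + (k+1)$, completing the induction. Both hypotheses are used at every step: the first selects the branch of $\varphi^{-1}$ applied to $\alpha^k(y)$, while the second (after the character flip) selects the branch of $\varphi$. Note also that membership in $\N_q$ guarantees every relevant element is nonzero, so no degenerate branch of $\varphi$ is ever invoked.

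Finally, setting $k = p$ and using that $\F_q$ has characteristic $p$ gives $\alpha^p(y) = y - pa + p = y$, so the orbit of $y$ under $\alpha$ has length dividing the prime $p$. Since $\alpha(y) = y - a + 1 \neq y$ (because $a \in \N_q$ forces $a \neq 1$), the orbit cannot have length $1$, hence has length exactly $p$, and $\alpha$ contains the claimed $p$-cycle. No substantial obstacle arises; the only subtlety is the residue tracking in the inductive step, where — in contrast to \lref{l:pcycprelim} — the quantity fed into $\varphi$ changes quadratic class relative to $\alpha^k(y)$.
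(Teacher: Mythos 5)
Your proof is correct and is exactly the argument the paper intends: the paper proves \lref{l:pcycprelim2} only by asserting that ``analogous arguments'' to \lref{l:pcycprelim} apply, and you have carried out that analogy faithfully, correctly identifying the one point where it is not verbatim (since $a \in \N_q$, the branch $\varphi^{-1}=\varphi[b^{-1},a^{-1}]$ still multiplies the non-residue $\alpha^k(y)$ by $a^{-1}$, and the intermediate value lands in $\R_q$, consistent with the paper's remark that $\alpha$ satisfies the sequence $z_i=(-1)^{i+1}$). No gaps.
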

	
	If the hypotheses of \lref{l:pcycprelim2} hold then $\alpha$ satisfies the sequence $z \in \{-1, 1\}^{2p}$ defined by $z_i = (-1)^{i+1}$ for $i \in \{0, 1, 2, \ldots, 2p-1\}$. We are now ready to prove \tref{t:quadneg}.
	
	\begin{proof}[Proof of \tref{t:quadneg}]
		Let $(a, b) \in \F_q^2$ be valid and let $\alpha = \alpha[a, b]$. First suppose that $\{a, b\} \subseteq \F_p \cap \N_q$. Using \lref{l:junalphr} it is simple to verify that $\alpha_0$ is contained in $\F_p$. Thus $\alpha$ has a cycle of length at most $p$.
		
		Now we deal with the case where $\{a, b\} \not\subseteq \F_p \cap \N_q$. So either $\{a, b\} \subseteq \F_p \cap \R_q$ or $\{a, b\} \not\subseteq \F_p$. We will deal with the latter case first. Since $\mathcal{L}[a, b]$ and $\mathcal{L}[b, a]$ are isomorphic~\cite{MR2134185} and isotopy preserves the lengths of row cycles, we can swap $a$ and $b$ if necessary. Thus we can assume that $a \not\in \F_p$. We will first assume that $a \in \R_q \setminus \F_p$. Define
		\[
		Y = \{y \in \F_q : \{y-ja+j, y-(j+1)a+j\} \subseteq \R_q \text{ for all } j \in \{0, 1, 2, \ldots, p-1\}\}.
		\]
		By \lref{l:pcycprelim}, if $Y \neq \emptyset$ then $\alpha$ contains a $p$-cycle. Define
		\[
		Q(y) = \prod_{j=0}^{p-1}(1+\eta(y-ja+j))(1+\eta(y-(j+1)a+j))
		\]
		and $S = \sum_{y \in \F_q} Q(y)$. If $y \in Y$ then $Q(y) = 4^p$. If $y \in \{ja-1, (j+1)a-1 : j \in \{0, 1, 2, \ldots, p-1\}\}$ then $Q(y) \leq 2^{2p-1}$. In all other cases $Q(y)=0$. It follows that $S \leq 4^p|Y|+p4^p$. Expanding $Q(x)$ and using the fact that $\eta$ is a homomorphism on $\F_q^*$ we can write $S$ as the sum over terms of the form $\sum_{y \in \F_q} \eta(\pm K(y))$ where $K$ is the product of $k$ distinct factors in $\{y-ja+j, y-(j+1)a+j : j \in \{0, 1, 2, \ldots, p-1\}\}$ for some $k \in \{0, 1, 2, \ldots, 2p\}$. If $ja-j = ka-k$ for some $\{j, k\} \subseteq \{0, 1, 2, \ldots, p-1\}$ then $j=k$. Similarly if $(j+1)a-j = (k+1)a-k$ then $j=k$. Suppose that $ja-j = (k+1)a-k$. First note that $j=k+1$ is not a solution to this equation. Hence if this equation is satisfied we must have $a=(j-k)/(j-k-1) \in \F_p$, which is a contradiction. It follows that the roots of each term $K$ are distinct. For each $k \in \{1, 2, \ldots, 2p\}$ there are $\binom{2p}{k}$ terms $K$ of degree $k$, and \tref{t:weil} or \tref{t:quadweil} applies to each such term. Using these theorems we obtain the bound 
		\[
		S \geq q-\binom{2p}{2} - \sum_{k=3}^{2p} \binom{2p}{k} (k-1)q^{1/2} = q + q^{1/2}(p-1)(1-4^p+2p) + p(1-2p).
		\]
		Combining this with the fact that $S \leq 4^p|Y|+p4^p$ gives 
		\[
		|Y| \geq 2^{-2p}(q + q^{1/2}(p-1)(1-4^p+2p) + p(1-2p-4^p)).
		\]
		Therefore $Y \neq \emptyset$ if $q + q^{1/2}(p-1)(1-4^p+2p) + p(1-2p-4^p) > 0$. 
		This inequality will be true if $q^{1/2} > ((1-p)(1-4^p+2p)+((p-1)^2(1-4^p+2p)^2-4p(1-2p-4^p))^{1/2})/2$. Set $q=p^d$ for some positive integer $d$. Then the previous inequality will hold provided that $d > 2\log(((1-p)(1-4^p+2p)+((p-1)^2(1-4^p+2p)^2-4p(1-2p-4^p))^{1/2})/2)/\log(p)$. Define
		\[
		f(p) = \left\lfloor \frac2{\log(p)}\log\left(\frac12((1-p)(1-4^p+2p)+((p-1)^2(1-4^p+2p)^2-4p(1-2p-4^p))^{1/2})\right) \right\rfloor+1.
		\]
		Then we have shown that $\mathcal{L}[a, b]$ contains a row cycle of length $p$ if $d \geq f(p)$. We can use analogous arguments in conjunction with \lref{l:pcycprelim2} to prove that same result if $a \in \N_q \setminus \F_p$.
		
		We now deal with the case where $\{a, b\} \subseteq \F_p \cap \R_q$. By \lref{l:pcycprelim}, to show that $\alpha$ contains a $p$-cycle it suffices to show that there exists some $y \in \F_q$ such that $\{y-ja+j, y-(j+1)a+j : j \in \{0, 1, \ldots, p-1\}\} \subseteq \R_q$. Note that $y-ja+j = y-(k+1)a+k$ where $k=j+a/(1-a)$. Therefore the result will follow if
		$\{y-ja+j : j \in \{0, 1, \ldots, p-1\}\} \subseteq \R_q$. We can use analogous arguments as in the case where $a \in \R_q \setminus \F_p$ to show that $\alpha$ contains a $p$-cycle if $q=p^d$ with $d > 2\log(((2-p)(1-2^p+p)+((p-2)^2(1-2^p+p)^2-8p(1-p-2^p))^{1/2})/2)/\log(p)$. As this quantity is less than $f(p)$ it follows that $\mathcal{L}[a, b]$ contains a row cycle of length $p$ if $d \geq f(p)$.
	\end{proof}
	
	The function $f$ provided in the proof of \tref{t:quadneg} satisfies $f(p) \sim p\log(16)/\log(p)$. Furthermore, $f$ is not minimal. For example, $f(3) = 9$, however every quadratic Latin square of order $3^d$ contains a $3$-cycle if $d \geq 7$.
	
	\section{Anti-perfect $1$-factorisations and anti-atomic Latin squares}\label{ss:antiperf}
	
	In this section we prove our main results concerning anti-perfect $1$-factorisations and anti-atomic Latin squares. To prove \tref{t:antiperf} and \tref{t:antiatom} we need the following definition. Let $v$ be a positive integer and $K \subseteq \{2, 3, 4, \ldots\}$. A \emph{pairwise balanced design} $\pbd(v, K)$ is a pair $(X, \B)$ where $X$ is a set of order $v$ whose elements are called points, and $\B$ is a collection of subsets of $X$ called blocks, such that the size of each block in $\B$ is an element of $K$, and each pair of distinct points in $X$ appears in exactly one block in $\B$.
	
	We can use PBD's to construct Latin squares, in a method known as the `PBD construction', which we describe now. It is known that an idempotent Latin square of order $n$ exists for all $n \neq 2$. Suppose that $(X, \B)$ is a $\pbd(v, K)$ for some positive integer $v$ and some set $K \subseteq \{3, 4, 5, \ldots, \}$. For each $B \in \B$ let $L^B$ be an idempotent Latin square with symbol set $B$. We can then define a $v \times v$ idempotent Latin square $L$ with symbol set $X$ by
	\[
	L_{i, j} = \begin{cases}
		i & \text{if } i = j, \\
		L^B_{i, j} & \text{if } i \neq j, \text{ where } B \text{ is the unique block in } \B \text{ with } \{i, j\} \subset B.
	\end{cases}
	\]
	The PBD construction has been used to solve various problems, such as the construction of mutually orthogonal Latin squares (see e.g.~\cite{MR2469212}) and the construction of $1$-factorisations which contain only short cycles~\cite{MR3537912, MR2433008, MR2475030}. We now give a series of simple lemmas regarding Latin squares obtained from the PBD construction. The first is a simple observation.
	
	\begin{lem}\label{l:pbdconj}
		Any conjugate of a Latin square obtained from the PBD construction can also be obtained from the PBD construction.
	\end{lem}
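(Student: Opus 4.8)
The plan is to exploit the block structure of the triples of a PBD-constructed square. Write $L$ for the square obtained from a $\pbd(v, K)$ $(X, \B)$ using idempotent squares $L^B$ with symbol set $B$. First I would record two structural facts. Since each $L^B$ is idempotent and the diagonal of $L$ is the identity, $L$ is idempotent, so its triples include $(i, i, i)$ for every $i \in X$. Moreover, for $i \neq j$ lying in the unique block $B$ containing $\{i, j\}$, the symbol $L_{i, j} = L^B_{i, j}$ again lies in $B$, because $L^B$ has symbol set $B$. Hence every off-diagonal triple $(r, c, s)$ of $L$ has all three of its coordinates in a common block, and conversely the off-diagonal triples of $L$ whose coordinates all lie in $B$ are precisely the off-diagonal triples of $L^B$.

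Next I would pass to a conjugate. A conjugate $L'$ of $L$ is obtained by applying a fixed permutation $\pi \in \text{Sym}(\{1, 2, 3\})$ to the coordinates of every triple. The two facts above are conjugation-invariant in the following sense. Each diagonal triple $(i, i, i)$ is fixed by $\pi$, so $L'$ is again idempotent. Each off-diagonal triple of $L$ is sent to a triple with the same unordered set of coordinates, so it still has all of its coordinates in a single block $B$. Thus the off-diagonal triples of $L'$ partition according to blocks, and those lying in $B$ are exactly the images under $\pi$ of the triples of $L^B$; that is, they constitute the corresponding $\pi$-conjugate $(L^B)'$ of $L^B$.

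Finally I would assemble the PBD construction for $L'$. Since conjugating a Latin square with row, column and symbol set $B$ yields a Latin square on the same set, and since idempotence is preserved as noted, each $(L^B)'$ is an idempotent Latin square with symbol set $B$. The diagonal of $L'$ is the identity, and for $i \neq j$ the entry $L'_{i, j}$ is determined by $(L^B)'$, where $B$ is the block containing $\{i, j\}$. Therefore $L'$ is precisely the square obtained from the same PBD $(X, \B)$ using the idempotent squares $(L^B)'$, which completes the proof. I do not expect a genuine obstacle here; the only point requiring care is to treat all six conjugates uniformly, and the observation that conjugation merely permutes the coordinate set of each triple handles every case at once.
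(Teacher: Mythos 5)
Your proof is correct, and it takes the natural route the paper has in mind: the paper states this lemma as ``a simple observation'' and gives no proof, and your argument (conjugation permutes the coordinates of each triple, hence preserves the partition of off-diagonal triples into blocks and turns each $L^B$ into its corresponding conjugate, which is again an idempotent Latin square on $B$) is exactly the justification that is being elided. The one point worth keeping explicit is the step you do include, namely that idempotence of $L^B$ forces every off-diagonal triple to stay off-diagonal after conjugation, so the diagonal/off-diagonal split is preserved.
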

	
	The following is a known result~\cite{MR2433008}.
	
	\begin{lem}\label{l:pbdcyc}
		Let $L$ be a Latin square obtained from the pairwise balanced design $(X, \B)$.
		Let $\{i, j\} \subseteq X$ and let $B \in \B$ be the block containing $i$ and $j$. Then $r_{i, j}(B) = B$.
	\end{lem}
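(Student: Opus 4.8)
The plan is to prove the stronger set-theoretic fact that, for any $i \in B$ and any column $k \in X$, the entry $L_{i, k}$ lies in $B$ if and only if $k \in B$; the equality $r_{i, j}(B) = B$ then falls out immediately. First I would record the standard property of a $\pbd$: since every pair of distinct points lies in a unique block, any two distinct blocks meet in at most one point. In particular, if $B'$ is the block containing a pair $\{i, k\}$ with $i \in B$ and $k \not\in B$, then $B' \neq B$ and hence $B \cap B' = \{i\}$.

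Next I would prove the \emph{if} direction. Fix $i \in B$ and $k \in B$. If $k = i$ then $L_{i, k} = i \in B$ by idempotency of $L$; otherwise $\{i, k\} \subset B$, so $L_{i, k} = L^B_{i, k} \in B$ because $L^B$ has symbol set $B$. Thus $L_{i, k} \in B$ for every $k \in B$, and since these $|B|$ entries lie in a single row of $L$ they are distinct, giving $\{L_{i, k} : k \in B\} = B$.

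The crux is the \emph{only if} direction, which I would establish via its contrapositive: $k \not\in B$ forces $L_{i, k} \not\in B$. Here $i \neq k$, and the block $B'$ containing $\{i, k\}$ satisfies $B \cap B' = \{i\}$ by the intersection property above. Now $L_{i, k} = L^{B'}_{i, k}$, and since $L^{B'}$ is an idempotent Latin square on symbol set $B'$, the symbol $i = L^{B'}_{i, i}$ occupies a unique cell of row $i$, so $L^{B'}_{i, k} \neq i$ whenever $k \neq i$. Hence $L_{i, k} \in B' \setminus \{i\}$, which is disjoint from $B$, so $L_{i, k} \not\in B$. I expect this case to be the main obstacle, as it is the only place where both the intersection property of the $\pbd$ and the idempotency of the sub-squares $L^B$ are essential.

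Finally I would assemble the statement about the row permutation. Take any $s \in B$ and let $k$ be the unique column with $L_{i, k} = s$; by the \emph{only if} direction just proved, $k \in B$. Then $r_{i, j}(s) = L_{j, k}$ by definition of the row permutation, and since $j \in B$ and $k \in B$ the \emph{if} direction yields $L_{j, k} \in B$. Therefore $r_{i, j}(B) \subseteq B$, and because $r_{i, j}$ is a bijection of the finite set $X$, this inclusion is in fact an equality, which proves the lemma.
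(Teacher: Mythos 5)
Your proof is correct. The paper itself gives no proof of this lemma -- it is stated as a known result with a citation -- so there is no argument to compare against, but your proof is the standard one: the key observation that $L_{i,k}\in B$ if and only if $k\in B$ (using idempotency of $L^{B'}$ to rule out the symbol $i$ reappearing, and the fact that distinct blocks of a PBD meet in at most one point) is exactly what is needed, and the finiteness argument correctly upgrades $r_{i,j}(B)\subseteq B$ to equality.
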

	
	\begin{lem}\label{l:pbdideminv}
		Let $L$ be a Latin square obtained from the pairwise balanced design $(X, \B)$. If $L^B$ is involutory for each $B \in \B$ then $L$ is also involutory.
	\end{lem}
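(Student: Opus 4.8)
The plan is to reformulate the involutory property pointwise and then verify it by reducing every computation to a single block. Recall that $L$ is involutory precisely when it equals its $(1,3,2)$-conjugate, which in terms of triples means that $(r, c, s)$ is a triple of $L$ if and only if $(r, s, c)$ is. Equivalently, for every pair of points $i, j \in X$ we must have $L_{i, L_{i, j}} = j$; that is, each row of $L$, viewed as the map $j \mapsto L_{i, j}$, is an involution on $X$. This is the condition I would verify directly from the PBD construction, recalling that $L$ and each $L^B$ are idempotent by construction.

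First I would dispose of the diagonal case: if $i = j$ then $L_{i, j} = i$ by idempotency of $L$, so $L_{i, L_{i, j}} = L_{i, i} = i = j$, as required. Now suppose $i \neq j$, let $B \in \B$ be the unique block with $\{i, j\} \subseteq B$, so that $L_{i, j} = L^B_{i, j}$, and write $k = L^B_{i, j}$. The crucial observation, and the only point requiring care, is that the second lookup $L_{i, k}$ is governed by the \emph{same} block $B$. Indeed, since $L^B$ has symbol set $B$ we have $k \in B$, and since $L^B$ is idempotent the symbol $i$ occurs in row $i$ of $L^B$ only in column $i$, so $k = L^B_{i, j} \neq i$ (as $j \neq i$). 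Hence $\{i, k\}$ is a pair of distinct points both lying in $B$, and by the defining property of a pairwise balanced design $B$ is the unique block containing $\{i, k\}$. Therefore $L_{i, k} = L^B_{i, k}$.

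Finally I would invoke the hypothesis that $L^B$ is involutory: this gives $L^B_{i, L^B_{i, j}} = j$, that is, $L^B_{i, k} = j$. Combining these yields $L_{i, L_{i, j}} = L_{i, k} = L^B_{i, k} = j$, completing the verification. The main (and essentially only) obstacle is confirming that both evaluations $L_{i, j}$ and $L_{i, k}$ are controlled by one and the same block; once idempotency rules out $k = i$, the uniqueness of blocks in the PBD forces $L_{i,k}$ back into $B$, and the involutory property of $L^B$ then transfers directly to $L$.
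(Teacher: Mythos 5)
Your proof is correct and follows essentially the same route as the paper's: both arguments observe that $k = L^B_{i,j}$ lies in $B$ and differs from $i$ by idempotency of $L^B$, so the second lookup $L_{i,k}$ is governed by the same block, whence the involutory property of $L^B$ transfers to $L$. Your write-up is just slightly more explicit about the pointwise reformulation and the trivial diagonal case.
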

	\begin{proof}
		Let $\{i, j\} \subseteq X$ with $i \neq j$. Then $k = L_{i, j} = L^B_{i, j}$ where $B \in \B$ is the block containing $i$ and $j$. Hence $k \in B$ also. Furthermore $L^B$ is idempotent, thus $k \neq i$. Thus $L_{i, k} = L^B_{i, k} = j$ because $L^B$ is involutory. Therefore $L$ is involutory as well.
	\end{proof}
	
	It is known that an idempotent, involutory Latin square of order $n$ exists if and only if $n$ is odd. Combining \lref{l:pbdconj}, \lref{l:pbdcyc} and \lref{l:pbdideminv} we obtain the following corollary.
	
	\begin{cor}\label{c:pbdanti}
		Let $K \subseteq \{3, 4, 5, \ldots, \}$ and suppose that there exists a $\pbd(v, K)$ with at least two blocks. Then there exists an anti-atomic Latin square of order $v$. Furthermore if $K$ contains only odd integers, then there exists an anti-perfect $1$-factorisation of $K_{v+1}$.	
	\end{cor}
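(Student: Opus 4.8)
The plan is to build a single Latin square by the PBD construction and to read off both conclusions from the cycle structure of its row permutations. Fix a $\pbd(v, K)$ $(X, \B)$ with at least two blocks. First I would record that every block is a \emph{proper} subset of $X$: any two distinct blocks meet in at most one point (otherwise a pair would lie in two blocks), so if $B_1, B_2 \in \B$ are distinct then $|B_1| + |B_2| \leq v + 1$, whence each block has size at most $v - 2 < v$. Since every block has size at least $3 \neq 2$, an idempotent Latin square $L^B$ with symbol set $B$ exists for each $B \in \B$, and I would form the idempotent square $L$ of order $v$ by the PBD construction.

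For the anti-atomic claim I would show that no conjugate of $L$ has a row cycle of length $v$. By \lref{l:pbdconj} each conjugate $L'$ again arises from the PBD construction, over a design that still has at least two blocks (the blocks correspond to the proper Latin subsquares of $L$, which conjugation preserves). Applying \lref{l:pbdcyc} to $L'$, for any distinct $i, j$ the row permutation $r_{i, j}$ fixes setwise the block $B$ containing $i$ and $j$; as $\emptyset \neq B \neq X$, this is a proper invariant set, so $r_{i, j}$ cannot be a single $v$-cycle. Hence $L'$ has no row cycle of length $v$, and $L$ is anti-atomic.

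For the anti-perfect claim, suppose $K$ consists of odd integers. Counting the $v - 1$ pairs through a fixed point $x$ as $\sum_{B \ni x}(|B| - 1)$ shows $v - 1$ is even, so $v$ is odd; thus $K_{v+1}$ admits $1$-factorisations and an idempotent, involutory Latin square of order $v$ exists. Choosing each $L^B$ idempotent and involutory (possible since $|B|$ is odd) and invoking \lref{l:pbdideminv}, the resulting $L$ is idempotent and involutory, so $\mathcal{F}(L)$ is well defined. I would then use the correspondence of \sref{s:back} between row cycles of $L$ and cycles of $\mathcal{F}(L)$: for the pair of $1$-factors indexed by $i, j$, the cycle of $\mathcal{F}(L)$ through the added vertex is exactly the one coming from the $r_{i, j}$-cycle containing $i$ and $j$, and has length $\ell + 1$, where $\ell$ is the length of that row cycle, while every other cycle avoids the added vertex. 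Since this cycle has length $\ell + 1 \leq v < v+1$ by the anti-atomic bound, the pair $(i,j)$ induces more than one cycle, so $\mathcal{F}(L)$ contains no Hamiltonian cycle and is anti-perfect.

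I expect the main obstacle to be this reduction in the anti-perfect step: one must use the correspondence of \sref{s:back} carefully to see that the cycle of $\mathcal{F}(L)$ through the added vertex is precisely the one arising from the $r_{i, j}$-cycle that contains $i$ and $j$, of length $\ell + 1$, so that a Hamiltonian cycle of $K_{v+1}$ would force $\ell + 1 = v + 1$, i.e.\ a length-$v$ row cycle in $L$, which \lref{l:pbdcyc} forbids. The block-properness bound and the parity computation giving $v$ odd are routine by comparison, and the only other point demanding attention is confirming that conjugation sends a design with at least two blocks to another such design in the anti-atomic step.
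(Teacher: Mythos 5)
Your proof is correct and follows essentially the same route as the paper, which simply combines \lref{l:pbdconj}, \lref{l:pbdcyc} and \lref{l:pbdideminv} exactly as you do; you have merely made explicit the details the paper leaves implicit (that blocks are proper subsets, that $K$ odd forces $v$ odd, and that a Hamiltonian cycle of $\mathcal{F}(L)$ must be the cycle through the added vertex and hence would require a length-$v$ row cycle).
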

	
	A result of Colbourn, Haddad and Linek~\cite{MR1370131} implies the existence of a $\pbd(v, K)$ with at least two blocks where $K$ contains only odd integers whenever $v \geq 7$ is odd. Combining this with \cyref{c:pbdanti} proves \tref{t:antiperf}. As mentioned in \sref{s:intro}, Dukes and Ling~\cite{MR2475030} constructed a $1$-factorisation of $K_{v+1}$ whose cycles are all of length at most $1720$, for all odd $v$. Each of these $1$-factorisations comes from the PBD construction with a $\pbd(v, \{3, 5\})$. \lref{l:pbdcyc} tells us that these $1$-factorisations are actually anti-perfect for all $v \geq 7$. We are now ready to prove \tref{t:antiatom}.
	
	\begin{proof}[Proof of \tref{t:antiatom}]
		A result of Hartman and Heinrich~\cite{MR1209190} implies the existence of a $\pbd(v, K)$ with at least two blocks where $2 \not\in K$ whenever $v = 7$ or $v \geq 9$. \cyref{c:pbdanti} then implies that there exists an anti-atomic Latin square of order $v$ whenever $v \not\in \{1, 2, 3, 4, 5, 6, 8\}$. Let $L$ be a Latin square which is derived from the Cayley table of a group $G$. By~\cite[Theorem 4.2.2]{MR0351850}, every conjugate of $L$ is isotopic to itself. Every row cycle in the row permutation $r_{g, h}$ of $L$ has length equal to the order of $gh^{-1}$ in $G$. Thus the existence of a non-cyclic group of order $v$ implies the existence of an anti-atomic Latin square of order $v$. This proves that anti-atomic Latin squares of order $v$ exist for $v \in \{4, 6, 8\}$. It is easy to verify that every Latin square of order $v \in \{2, 3, 5\}$ contains a row cycle of length $v$, and thus is not anti-atomic.
	\end{proof}
	
	\lref{l:pbdconj} and \lref{l:pbdcyc} imply that anti-atomic Latin squares can be built from $1$-factorisations constructed in~\cite{MR3537912, MR2433008, MR2475030}. We also record that Latin squares corresponding to Steiner $1$-factorisations give us examples of anti-atomic Latin squares of order $q$ for all $q \equiv 1 \bmod 6$ or $3 < q \equiv 3 \bmod 6$.
	
	The existence spectrum of anti-atomic Latin squares is the same as the existence spectrum of anti-perfect $1$-factorisations of complete bipartite graphs. If $L$ is an anti-atomic Latin square of order $n$ then $\mathcal{E}(L)$ is an anti-perfect $1$-factorisation of $K_{n, n}$. However the converse is not true in general. For example, let $L$ be the quadratic Latin square $\mathcal{L}[2, 6]$ of order $11$. Then $\mathcal{E}(L)$ is anti-perfect but the $(2, 1, 3)$-conjugate of $L$ is row-Hamiltonian.
	
	The remainder of this section will be devoted to proving \tref{t:quadanti}. The first step is to find for which prime powers $q$ there exists quadratic, idempotent, involutory Latin squares of order $q$ which do not contain any row cycle of length $q$.
	Every quadratic Latin square is idempotent. The following lemma~\cite{MR623318} gives sufficient conditions for $\mathcal{L}[a, b]$ to be involutory.
	
	\begin{lem}\label{l:qii}
		Let $q \equiv 3 \bmod 4$ be a prime power and let $a \in \N_q \setminus \{-1\}$. The Latin square $\mathcal{L}[a, a^{-1}]$ is involutory.
	\end{lem}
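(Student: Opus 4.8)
The plan is to reduce the involutory property of $\mathcal{L}[a, a^{-1}]$ to a statement about the single permutation $\varphi = \varphi[a, a^{-1}]$. Recall from \sref{ss:quadcyc} that $(\mathcal{L}[a, b])_{i, j} = i + \varphi[a, b](j-i)$, and that $L$ is involutory precisely when it equals its $(1, 3, 2)$-conjugate, i.e. when $L_{i, j} = k \iff L_{i, k} = j$ for all $i$. Writing $u = j-i$ and $v = k-i$, this condition becomes $\varphi(u) = v \iff \varphi(v) = u$ for all $u, v \in \F_q$, which for a bijection $\varphi$ is equivalent to $\varphi \circ \varphi = \mathrm{id}$. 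So first I would establish that $\mathcal{L}[a, a^{-1}]$ is involutory if and only if $\varphi[a, a^{-1}]$ is an involution on $\F_q$.

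Before this reduction is meaningful I must check that $\mathcal{L}[a, a^{-1}]$ is a genuine quadratic Latin square, i.e. that $(a, a^{-1})$ is a valid pair. This is where the hypothesis $q \equiv 3 \bmod 4$ enters. We have $a \cdot a^{-1} = 1 \in \R_q$, and $(a-1)(a^{-1}-1) = -a^{-1}(a-1)^2$, which lies in $\R_q$ exactly when $-a^{-1} \in \R_q$. Since $a \in \N_q$ forces $a^{-1} \in \N_q$, and $q \equiv 3 \bmod 4$ gives $-1 \in \N_q$, the product $-a^{-1}$ of two non-residues is a residue, so $(a, a^{-1})$ is valid and $\varphi[a, a^{-1}]$ is a well-defined permutation.

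The core of the proof is then a direct three-case verification that $\varphi(\varphi(x)) = x$. For $x = 0$ this is immediate (and it handles the diagonal of $L$). For $x \in \R_q$ we have $\varphi(x) = ax$; because $a \in \N_q$ the product $ax$ is a non-residue, so $\varphi(ax) = a^{-1}(ax) = x$. For $x \in \N_q$ we have $\varphi(x) = a^{-1}x$; since $a^{-1} \in \N_q$ the product $a^{-1}x$ is a residue, so $\varphi(a^{-1}x) = a(a^{-1}x) = x$. This uses only that $\R_q$ is an index-two subgroup of $\F_q^*$, so that a non-residue times a residue is a non-residue and a non-residue times a non-residue is a residue, together with $a, a^{-1} \in \N_q$.

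I expect no serious obstacle: once the reduction to $\varphi \circ \varphi = \mathrm{id}$ is in place, the argument is a routine application of the multiplicative structure of the residues. The only point requiring care is keeping the two hypotheses distinct — $a \in \N_q$ is what drives the involution in the case analysis, while $q \equiv 3 \bmod 4$ is needed only to guarantee validity (that $\mathcal{L}[a, a^{-1}]$ is a Latin square at all). The exclusion $a \neq -1$ plays no role in the involutory property; it merely rules out the degenerate case $a = a^{-1}$, in which the square collapses to $\mathcal{L}[-1, -1]$, the square isotopic to the additive group table.
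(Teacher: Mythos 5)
Your proof is correct. Note that the paper does not actually prove this lemma---it is quoted from the literature (the reference cited alongside the statement)---so there is no in-paper argument to compare against; your reduction of the involutory condition $L_{i,j}=k \iff L_{i,k}=j$ to $\varphi[a,a^{-1}]\circ\varphi[a,a^{-1}]=\mathrm{id}$, the validity check via $(a-1)(a^{-1}-1)=-a^{-1}(a-1)^2$ and $-1\in\N_q$, and the three-case computation together form a complete, self-contained verification. Your closing observations are also accurate: the hypothesis $q\equiv 3\bmod 4$ enters only to make $(a,a^{-1})$ a valid pair, and the exclusion $a\neq -1$ is not needed for the involutory property itself.
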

	
	Every conjugate of a quadratic Latin square is also a quadratic Latin square, and a result of Wanless~\cite{MR2134185} allows us to determine these conjugates. Using this result, it is simple to verify that the only involutory quadratic Latin squares which are not given by \lref{l:qii} are those squares of the form $\mathcal{L}[a, a]$. As mentioned in \sref{ss:n2}, Latin squares $\mathcal{L}[a, a]$ of order $q$ are isotopic to the Cayley table of the additive group $(\F_q, +)$. 
	
	The following is a corollary of \tref{t:n2}. 
	
	\begin{cor}\label{c:iii}
		Let $q \equiv 3 \bmod 4$ be a prime power and let $a \in \N_q$. The Latin square $\mathcal{L}[a, a^{-1}]$ contains an intercalate if and only if $\{2(1-a), 2(a^2+1)\} \subseteq \N_q$.
	\end{cor}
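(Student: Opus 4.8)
The plan is to apply \tref{t:n2} directly with $b = a^{-1}$. Since $q \equiv 3 \bmod 4$, the second disjunct of \tref{t:n2} (which requires $q \equiv 1 \bmod 4$) never applies, so $\mathcal{L}[a, a^{-1}]$ contains an intercalate if and only if the single condition
\[
(2ab-a-b)(a+b)(a-1) \in \R_q \quad\text{and}\quad \{2(a+b-2)(a-1),\, 2a(a+b)\} \subseteq \N_q
\]
holds with $b = a^{-1}$. First I would record that $(a, a^{-1})$ is a valid pair, so that $\mathcal{L}[a, a^{-1}]$ is genuinely a quadratic Latin square: we have $a\cdot a^{-1} = 1 \in \R_q$, while $(a-1)(a^{-1}-1) = -(a-1)^2/a$ has character $\eta(-1)\eta(a) = (-1)(-1) = 1$ since $a \in \N_q$, so $(a-1)(a^{-1}-1) \in \R_q$.

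Next I would substitute $b = a^{-1}$ and factor out squares. Clearing denominators gives the identities $a(2-a-a^{-1}) = -(a-1)^2$, $a(a+a^{-1}) = a^2+1$ and $a(a+a^{-1}-2) = (a-1)^2$, whence
\[
(2ab-a-b)(a+b)(a-1) = \frac{-(a-1)^3(a^2+1)}{a^2}, \qquad 2(a+b-2)(a-1) = \frac{2(a-1)^3}{a}, \qquad 2a(a+b) = 2(a^2+1).
\]
Now apply the quadratic character $\eta$, using $\eta(-1) = -1$ (as $q \equiv 3 \bmod 4$), $\eta(a) = \eta(a^{-1}) = -1$ (as $a \in \N_q$), and $\eta(x)^3 = \eta(x)$. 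The membership requirement of the first expression in $\R_q$ becomes $\eta(a-1)\eta(a^2+1) = -1$, while the two non-residue requirements become $\eta(2)\eta(a-1) = 1$ and $\eta(2)\eta(a^2+1) = -1$, respectively.

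Finally I would match these against the target conditions. Since $\eta(2(1-a)) = \eta(2)\eta(-1)\eta(a-1) = -\eta(2)\eta(a-1)$ and $\eta(2(a^2+1)) = \eta(2)\eta(a^2+1)$, the requirement $\{2(1-a),\, 2(a^2+1)\} \subseteq \N_q$ is exactly the conjunction $\eta(2)\eta(a-1) = 1$ and $\eta(2)\eta(a^2+1) = -1$, i.e.\ the second and third conditions above. The only remaining point, which is where a little care is needed, is that these two already force the first: multiplying them yields $\eta(2)^2\,\eta(a-1)\eta(a^2+1) = -1$, hence $\eta(a-1)\eta(a^2+1) = -1$. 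Thus the first (residue) condition is redundant, all three conditions collapse to $\{2(1-a),\, 2(a^2+1)\} \subseteq \N_q$, and the corollary follows. The computation is otherwise entirely routine; the main obstacle is simply the sign bookkeeping from $\eta(-1) = -1$ and $\eta(a) = -1$, together with the observation that imposing the two non-residue conditions automatically implies the residue condition.
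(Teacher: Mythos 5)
Your proof is correct and follows essentially the same route as the paper: substitute $b = a^{-1}$ into \tref{t:n2}, simplify using $\eta(-1) = \eta(a) = -1$, and note that the two non-residue conditions force the residue condition (the paper sees this by multiplying the two non-residue expressions directly rather than via character bookkeeping). The only cosmetic difference is your explicit check that $(a, a^{-1})$ is a valid pair, which the paper leaves implicit.
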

	\begin{proof}
		\tref{t:n2} implies that $\mathcal{L}[a, a^{-1}]$ contains an intercalate if and only if $(2-a-a^{-1})(a+a^{-1})(a-1) \in \R_q$ and $\{2(a+a^{-1}-2)(a-1), 2a(a+a^{-1})\} \subseteq \N_q$. This is equivalent to the condition that $\{2(a+a^{-1}-2)(a-1), 2a(a+a^{-1})\} \subseteq \N_q$ because $2(a+a^{-1}-2)(a-1) \cdot 2a(a+a^{-1}) = 4a(a+a^{-1}-2)(a+a^{-1})(a-1)$. This is equivalent to $\{2(1-a), 2(a^2+1)\} \subseteq \N_q$ because $a+a^{-1}-2 = a^{-1}(a-1)^2$.
	\end{proof}
	
	Let $q \equiv 3 \bmod 4$ be a prime power. Dinitz and Dukes~\cite{MR2206402} studied $1$-factorisations of complete graphs $K_{q+1}$ of the form $\mathcal{F}(\mathcal{L}[a, a^{-1}])$ for $a \in \N_q \setminus \{-1\}$. Among other things, they characterised when such $1$-factorisations contain a cycle of length four. Their Theorem $3.2$ is equivalent to \cyref{c:iii}. V\'{a}zquez-\'{A}vila~\cite{MR4354936} also studied $1$-factorisations of the form $\mathcal{F}(\mathcal{L}[a, a^{-1}])$ and showed that if $q \equiv 11 \bmod 24$ then there is a $1$-factorisation $\mathcal{F}$ of this form such that every pair of $1$-factors in $\mathcal{F}$ induces a subgraph in $K_{q+1}$ which contains exactly one cycle of length four.
	
	Let $L$ be a quadratic, idempotent, involutory Latin square of prime power order $q \equiv 3 \bmod 4$ which contains an intercalate. \lref{l:quadrowperms} implies that every row permutation of $L$ contains a transposition.
	A standard application of \tref{t:weil} and \tref{t:quadweil}, in conjunction with \cyref{c:iii}, shows that such a square exists for all $q \geq 83$. 
	A computer search then allows us to prove the following result.
	
	\begin{lem}\label{l:notrh}
		Let $q \equiv 3 \bmod 4$ be a prime power with $q > 3$. There exists a quadratic, idempotent, involutory Latin square $L$ of order $q$ such that no row permutation of $L$ is a $q$-cycle.
	\end{lem}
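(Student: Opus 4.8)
The plan is to exhibit a suitable square among the involutory quadratic squares $\mathcal{L}[a, a^{-1}]$ furnished by \lref{l:qii}, exploiting the fact that an intercalate forces short row cycles. The key reduction is this: every quadratic square is idempotent, and since $q \equiv 3 \bmod 4$, \lref{l:quadrowperms}$(i)$ guarantees that every row permutation of $\mathcal{L}[a, a^{-1}]$ has the same cycle structure as $\alpha[a, a^{-1}] = r_{0,1}$. Hence if $\mathcal{L}[a, a^{-1}]$ contains an intercalate, then $\alpha[a, a^{-1}]$ contains a transposition, so does every row permutation, and therefore—as $q > 2$—no row permutation can be a single $q$-cycle. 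By \cyref{c:iii}, producing such an intercalate amounts to finding a single $a \in \N_q \setminus \{-1\}$ with $\{2(1-a), 2(a^2+1)\} \subseteq \N_q$; note that $a = -1$ yields $2(1-a) = 4 \in \R_q$, so this value is automatically excluded, and $a \in \N_q$ is exactly what makes $\mathcal{L}[a, a^{-1}]$ valid and involutory.

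Next I would count such $a$ by a character-sum argument of the same shape as in \lref{l:translargeq} and \lref{l:numn2}. Set
\[
Q(a) = \bigl(1 - \eta(a)\bigr)\bigl(1 - \eta(2(1-a))\bigr)\bigl(1 - \eta(2(a^2+1))\bigr),
\]
so that $Q(a) = 8$ precisely on the desired set, $Q(a) = 0$ unless one of the three arguments vanishes, and $Q(a) \le 4$ at the boundary points $a \in \{0, 1\}$ (the factor $a^2 + 1$ never vanishes because $-1 \in \N_q$). Writing $S = \sum_{a \in \F_q} Q(a)$, the admissible set has size at least $(S-4)/8$. Expanding $Q$ and using that $\eta$ is multiplicative, $S$ equals $q$ plus a sum of character sums $\sum_a \eta(\pm K(a))$, where $K$ runs over products of subsets of $\{a,\,2(1-a),\,2(a^2+1)\}$. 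The linear factors contribute zero; the quadratics $a^2+1$ and $a(1-a)$ are evaluated exactly by \tref{t:quadweil} (after pulling out $\eta(-1) = -1$ where needed); and the cubics $a(a^2+1)$, $(1-a)(a^2+1)$ and the quartic $a(1-a)(a^2+1)$ are bounded by \tref{t:weil}, once each is checked not to be a scalar multiple of a square (the odd-degree ones automatically, the quartic because it has the simple roots $0$ and $1$). This gives a bound of the form $S \ge q - c\,q^{1/2} - c'$, so the admissible set is nonempty once $q$ exceeds an explicit threshold, which a careful accounting of the boundary terms pins at $q \ge 83$.

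The main difficulty here is bookkeeping rather than conceptual: confirming that none of the relevant polynomials is (up to a scalar) a square so that \tref{t:weil} legitimately applies, and tracking the boundary contributions tightly enough to reach the stated threshold. Finally, the finitely many prime powers $3 < q < 83$ with $q \equiv 3 \bmod 4$ are settled by a computer search exhibiting an explicit $a \in \N_q \setminus \{-1\}$ with $\{2(1-a), 2(a^2+1)\} \subseteq \N_q$ for each such $q$ (for instance, $a = 3$ works when $q = 7$). Combining the large-$q$ estimate with this search, and invoking the reduction of the first paragraph, proves the lemma.
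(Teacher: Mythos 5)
Your proposal is correct and follows essentially the same route as the paper: reduce via \lref{l:quadrowperms} to producing an involutory square $\mathcal{L}[a,a^{-1}]$ with an intercalate, characterise that via \cyref{c:iii}, count admissible $a$ by the standard Weil-bound character-sum argument for large $q$, and finish small cases by computer. The paper's own justification (given in the paragraph preceding the lemma) is exactly this, so no further comparison is needed.
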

	
	\lref{l:notrh} proves the existence of an anti-perfect $1$-factorisation of $K_{q+1}$ for any prime power $q$ where $3 < q \equiv 3 \bmod 4$. The next step in proving \tref{t:quadanti} is to prove some simple results concerning the direct product of Latin squares.
	
	\begin{lem}\label{l:dpcyc}
		Let $L$ and $M$ be Latin squares. Let the set of lengths of row cycles in $L$ and the set of lengths of row cycles in $M$ be $R$ and $P$, respectively. The set of lengths of row cycles in $L \times M$ is
		\[
		R \cup P \cup \{\text{\rm lcm}(r, p) : r \in R, p \in P\}.
		\]
	\end{lem}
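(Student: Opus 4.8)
The plan is to compute the row permutations of $L \times M$ explicitly in terms of the row permutations of $L$ and $M$, and then read off the cycle structure of the resulting permutations. Fix the symbol sets $S$ and $T$ of $L$ and $M$, and consider a row permutation $r = r_{(a_1, b_1), (a_2, b_2)}$ of $L \times M$ for distinct rows $(a_1, b_1), (a_2, b_2) \in S \times T$. Unwinding the definition of the direct product, given a symbol $(s, t)$ one finds the column $(x, y)$ with $L_{a_1, x} = s$ and $M_{b_1, y} = t$, and then $r(s, t) = (L_{a_2, x}, M_{b_2, y})$. First I would observe that $L_{a_2, x}$ is exactly the image of $s$ under the row permutation of $L$ sending row $a_1$ to row $a_2$ (to be read as the identity on $S$ when $a_1 = a_2$), and symmetrically in the second coordinate. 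Hence $r$ acts coordinatewise as $(s, t) \mapsto (\sigma(s), \tau(t))$, where $\sigma$ is a row permutation of $L$ (or the identity) and $\tau$ is a row permutation of $M$ (or the identity).

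The second ingredient is the cycle structure of such a coordinatewise product. If $\sigma$ has a cycle of length $r$ and $\tau$ has a cycle of length $p$, then on the product of these two cycles the map $(s,t) \mapsto (\sigma(s), \tau(t))$ decomposes into $\gcd(r, p)$ cycles, each of length $\mathrm{lcm}(r, p)$: the orbit of a point $(s, t)$ returns to $(s, t)$ after $k$ steps precisely when $k$ is divisible by both $r$ and $p$, so each orbit has length $\mathrm{lcm}(r, p)$, and there are $rp / \mathrm{lcm}(r, p) = \gcd(r, p)$ of them. I would record this as a one-line computation, noting in particular that a length-one cycle of $\sigma$ combines with a length-$p$ cycle of $\tau$ to give a single cycle of length $p$.

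Next I would split into cases according to whether $a_1 = a_2$, $b_1 = b_2$, or neither, at least one case being forced since the rows are distinct. When $a_1 = a_2$ the first factor $\sigma$ is the identity, so the cycle lengths of $r$ are exactly those of $\tau$, contributing precisely the lengths in $P$ as $(b_1, b_2)$ ranges over all distinct pairs; symmetrically, $b_1 = b_2$ contributes exactly the lengths in $R$. When $a_1 \neq a_2$ and $b_1 \neq b_2$, both $\sigma$ and $\tau$ are genuine row permutations, all of whose cycles have length at least two, and the preceding paragraph shows that the cycle lengths of $r$ are exactly the numbers $\mathrm{lcm}(r, p)$ with $r$ a cycle length of $\sigma$ and $p$ a cycle length of $\tau$.

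Finally I would assemble the two inclusions. Every cycle length of $L \times M$ arises in one of these three cases and so lies in $R \cup P \cup \{\mathrm{lcm}(r, p) : r \in R,\ p \in P\}$. Conversely, each $r \in R$ is realised by the case $b_1 = b_2$ and each $p \in P$ by the case $a_1 = a_2$, while for any $r \in R$ and $p \in P$ I can independently pick rows of $L$ witnessing a cycle of length $r$ and rows of $M$ witnessing a cycle of length $p$, then combine them into a single row permutation of $L \times M$ to produce a cycle of length $\mathrm{lcm}(r, p)$. I expect the main obstacle to be bookkeeping rather than genuine difficulty: the delicate points are correctly identifying the degenerate factor as the identity in the mixed cases, and verifying that the independent choice of rows in the two coordinates really does realise every value $\mathrm{lcm}(r, p)$.
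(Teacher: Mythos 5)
Your proposal is correct and follows essentially the same route as the paper: identify each row permutation of $L \times M$ as the coordinatewise product $u_{i,j} \times v_{k,\ell}$ of row permutations (or identities) of the factors, and read off cycle lengths via the lcm of the component cycle lengths. The paper leaves the case analysis and the converse inclusion as "now clear," whereas you spell them out, but there is no substantive difference.
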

	\begin{proof}
		Let the symbol sets of $L$ and $M$ be $S$ and $T$, respectively. For $i, j \in S$ denote the row permutation of $L$ mapping row $i$ to row $j$ by $u_{i, j}$. Similarly for $k, \ell \in T$ denote the row permutation of $M$ mapping row $k$ to row $\ell$ by $v_{k, \ell}$. For the purposes of this proof we will say that the permutations $u_{i, i}$ and $v_{k, k}$ denote the identity permutation, for $i \in S$ and $k \in T$. Let $\{(i, k), (j, \ell)\} \subseteq S \times M$ and consider the row permutation $r_{(i, k), (j, \ell)}$ of $L \times M$. We will show that $r_{(i, k), (j, \ell)} = u_{i, j} \times v_{k, \ell}$. Let $x \in S \times T$. Then $x = (L \times M)_{(i, k), (a, b)}$ for some $a \in S$ and $b \in T$. So
		\[
		\begin{aligned}
			r_{(i, k), (j, \ell)}(x) &= r_{(i, k), (j, \ell)}((L \times M)_{(i, k), (a, b)}) \\
			&= (L \times M)_{(j, \ell), (a, b)} \\
			&= (L_{j, a}, M_{\ell, b}) \\
			&= (u_{i, j}(L_{i, a}), v_{k, \ell}(M_{k, b})) \\
			&= u_{i, j} \times v_{k, \ell} (L_{i, a}, M_{k, b}) \\
			&= u_{i, j} \times v_{k, \ell}(x).
		\end{aligned}
		\]
		Hence $r_{(i, k), (j, \ell)} = u_{i, j} \times v_{k, \ell}$ as claimed. So the length of the cycle in $r_{(i, k), (j, \ell)}$ containing $x = (y, z)$ is the lowest common multiple of the lengths of the row cycle of $u_{i, j}$ containing $y$ and the row cycle of $v_{k, \ell}$ containing $z$. The lemma should now be clear.
	\end{proof}
	
	\begin{cor}\label{c:antiatomprod}
		Let $L$ and $M$ be Latin squares, such that at least one of $L$ or $M$ is anti-atomic. Then $L \times M$ is anti-atomic.
	\end{cor}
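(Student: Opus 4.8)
The plan is to reduce a statement about the conjugates of $L \times M$ to statements about the conjugates of the individual factors, and then feed these into \lref{l:dpcyc}. Write $m$ and $n$ for the orders of $L$ and $M$, so that $L \times M$ has order $mn$, and assume without loss of generality that $L$ is the anti-atomic factor, of order $m \geq 2$ (the reduction to this case being legitimate because the coordinate-swap $(s, t) \mapsto (t, s)$ is an isomorphism from $L \times M$ to $M \times L$, preserving row-cycle lengths and matching up conjugates). The crucial first step is to show that conjugation distributes over the direct product: for each permutation $\sigma$ of the three coordinates $(\text{row}, \text{column}, \text{symbol})$, the $\sigma$-conjugate of $L \times M$ is $L^\sigma \times M^\sigma$, where $L^\sigma$ and $M^\sigma$ denote the corresponding conjugates of $L$ and $M$. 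To verify this I would write a generic triple of $L \times M$ as $((a, b), (x, y), (L_{a, x}, M_{b, y}))$ and observe that uniformly permuting its three coordinate-pairs by $\sigma$ acts independently on the $S$-components and the $T$-components; hence it simultaneously permutes the $L$-triple $(a, x, L_{a, x})$ and the $M$-triple $(b, y, M_{b, y})$ by $\sigma$, exhibiting the resulting square as $L^\sigma \times M^\sigma$.

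With this in hand, fix a conjugate and let $R_\sigma$ and $P_\sigma$ be the sets of row-cycle lengths of $L^\sigma$ and $M^\sigma$, respectively. Applying \lref{l:dpcyc} to $L^\sigma \times M^\sigma$, which equals the $\sigma$-conjugate of $L \times M$, the set of row-cycle lengths of this conjugate is
\[
R_\sigma \cup P_\sigma \cup \{\operatorname{lcm}(r, p) : r \in R_\sigma,\ p \in P_\sigma\}.
\]
Since $L$ is anti-atomic, $L^\sigma$ has no row cycle of length $m$, so every $r \in R_\sigma$ obeys $2 \leq r \leq m - 1$, while every $p \in P_\sigma$ obeys $p \leq n$. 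Consequently each element of $R_\sigma$ is at most $m - 1 < mn$, each element of $P_\sigma$ is at most $n < mn$, and each lowest common multiple satisfies $\operatorname{lcm}(r, p) \leq rp \leq (m - 1)n < mn$. Therefore $mn$ does not occur as a row-cycle length of this conjugate.

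As the preceding holds for every one of the six conjugates, no conjugate of $L \times M$ contains a row cycle of length $mn$, which is precisely the assertion that $L \times M$ is anti-atomic. The only step demanding genuine care is the commutation of conjugation with the direct product; once that is established the remaining length bounds are entirely routine.
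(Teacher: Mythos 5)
Your proposal is correct and follows essentially the same route as the paper: establish that conjugation distributes over the direct product, then apply \lref{l:dpcyc} to each conjugate and observe that no cycle length in the resulting set can equal $mn$. The paper leaves both the distributivity of conjugation and the final length bound as brief remarks, whereas you spell them out, but the underlying argument is identical.
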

	\begin{proof}
		Let the symbol sets of $L$ and $M$ be $S$ and $T$, respectively. \lref{l:dpcyc} implies that if $L$ does not contain a row cycle of length equal to $|S|$ or $M$ does not contain a row cycle of length equal to $|T|$ then $L \times M$ does not contain a row cycle of length equal to $|S||T|$. It is a simple task to verify that the $(x, y, z)$-conjugate of $L \times M$ is equal to the direct product of the $(x, y, z)$-conjugate of $L$ and the $(x, y, z)$-conjugate of $M$, for any $1$-line permutation $(x, y, z)$ of $\{1, 2, 3\}$. The result follows.
	\end{proof}
	
	\begin{lem}\label{l:dpsym}
		Let $L$ and $M$ be idempotent, involutory Latin squares. Then $L \times M$ is also idempotent and involutory.
	\end{lem}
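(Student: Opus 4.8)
The plan is to verify both properties directly from the definitions given in \sref{s:intro} and \sref{s:back}, working coordinatewise on the symbol set $S \times T$, where $S$ and $T$ denote the symbol sets of $L$ and $M$ respectively. No earlier lemma is needed beyond the definitions of idempotent, involutory, and the direct product itself.

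First I would dispatch idempotence, which is immediate. For any $(a, b) \in S \times T$, the defining relation $(L \times M)_{(a, b), (a, b)} = (L_{a, a}, M_{b, b})$ together with the idempotence of $L$ and of $M$ gives $(L_{a, a}, M_{b, b}) = (a, b)$. Hence every diagonal entry of $L \times M$ equals the index of its own row, so $L \times M$ is idempotent.

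Next I would verify that $L \times M$ is involutory, that is, equal to its $(1, 3, 2)$-conjugate. Unwinding the definition of this conjugate, involutory is equivalent to the implication $(L \times M)_{(a, b), (x, y)} = (k, \ell) \Rightarrow (L \times M)_{(a, b), (k, \ell)} = (x, y)$. So suppose $(L \times M)_{(a, b), (x, y)} = (k, \ell)$; by the direct-product rule this says $L_{a, x} = k$ and $M_{b, y} = \ell$. Applying the involutory property of $L$ to the first equation yields $L_{a, k} = x$, and the involutory property of $M$ gives $M_{b, \ell} = y$. Therefore $(L \times M)_{(a, b), (k, \ell)} = (L_{a, k}, M_{b, \ell}) = (x, y)$, which is exactly the required condition.

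Since the whole argument is a coordinatewise unwinding of the three definitions, I do not anticipate any genuine obstacle. The only point demanding a little care is translating the ``$(1, 3, 2)$-conjugate'' formulation of \emph{involutory} into the pointwise implication above and then applying the involutory property of each factor in the correct coordinate; once that translation is made, the verification for both coordinates proceeds in parallel and the lemma follows.
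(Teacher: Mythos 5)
Your proposal is correct and follows essentially the same argument as the paper: coordinatewise verification of idempotence from $(L\times M)_{(a,b),(a,b)}=(L_{a,a},M_{b,b})$, and of the involutory property by applying the involutory property of $L$ and $M$ in each coordinate separately. No differences worth noting.
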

	\begin{proof}
		Let the symbol sets of $L$ and $M$ be $S$ and $T$, respectively. Let $(x, y) \in S \times T$. We know that $(L \times M)_{(x, y), (x, y)} = (L_{x, x}, M_{y, y}) = (x, y)$ because $L$ and $M$ are idempotent. Hence $L \times M$ is idempotent. Let $\{(u, v), (c, d)\} \subseteq S \times T$ be such that $(L \times M)_{(u, v), (x, y)} = (c, d)$. Then $L_{u, x} = c$ and hence $L_{u, c} = x$ because $L$ is involutory. Similarly $M_{v, d} = y$. Thus $(L \times M)_{(u, v), (c, d)} = (x, y)$ and so $L \times M$ is involutory.
	\end{proof}
	
	We are now ready to prove \tref{t:quadanti}.
	
	\begin{proof}[Proof of \tref{t:quadanti}]
		We first prove that there exists an anti-atomic quadratic Latin square of odd prime power order $q$ if and only if $q \not\in \{3, 5\}$. Let $q$ be an odd prime power. If $q \equiv 3 \bmod 4$ then by \lref{l:quadrowperms}, a quadratic Latin square of order $q$ contains an intercalate if and only if all of its row permutations contain a transposition. As noted in the proof of \tref{t:n2}, a valid pair $(a, b) \in \F_q^2$ satisfies condition $(i)$ in \lref{l:1mod4notrans} if and only if $(b, a)$ also does.
		Combining this with \lref{l:quadrowperms} implies that if $q \equiv 1 \bmod 4$ and $L$ is a quadratic Latin square of order $q$ which does not correspond to one of the exceptions listed in \lref{l:1mod4notrans}, then $L$ contains an intercalate if and only if all of its row permutations contain a transposition.
		The property of a Latin square containing an intercalate is invariant under conjugation. Therefore if a quadratic Latin square contains an intercalate, then either it corresponds to one of the exceptions in \lref{l:1mod4notrans}, or it is anti-atomic. 
		The comment before \lref{l:notrh} tells us that a quadratic Latin square of order $q$ containing an intercalate exists if $83 \leq q \equiv 3 \bmod 4$. We can then use a computer to confirm that an anti-atomic quadratic Latin square of order $q \equiv 3 \bmod 4$ exists whenever $3 < q \leq 79$. If $q \equiv 1 \bmod 4$ then we know from \tref{t:n2} that all Latin squares $\mathcal{L}[a, -a]$ of order $q$ contain an intercalate, and only eight of these correspond to the exceptions in \lref{l:1mod4notrans}. We can use this fact, along with \tref{t:weil} and \tref{t:quadweil} in the standard way, to prove that there exists an anti-atomic quadratic Latin square of order $q$ for all $5 < q \equiv 1 \bmod 4$.
		This proves that there exists an anti-atomic quadratic Latin square of odd prime power order $q$ if and only if $q \not\in \{3, 5\}$. Let $n \not\in \{3, 5, 15\}$ be an odd integer with prime power factorisation $q_1 \cdot q_2 \cdots q_k$. Then $q_i \not\in \{3, 5\}$ for some $i \in \{1, 2, \ldots, k\}$. The existence of an anti-atomic quadratic Latin square of order $q_i$ combined with \cyref{c:antiatomprod} proves the existence of an anti-atomic Latin square of order $n$ which is the direct product of quadratic Latin squares.
		
		We now prove the second claim of \tref{t:quadanti}. Let $n$ be an odd integer which contains a prime power divisor $m \neq 3$ with $m \equiv 3 \bmod 4$. Let the prime power factorisation of $n$ be $m \cdot q_2 \cdots q_k$. For each $q_i$ with $i \in \{2, 3, \ldots, k\}$ there exists a quadratic, idempotent, involutory Latin square of order $q_i$. In particular, any square of the form $\mathcal{L}[a, a]$ with $a \in \F_q \setminus \{0, 1\}$ satisfies this property. Also, \lref{l:notrh} implies the existence of a quadratic, idempotent, involutory Latin square of order $m$ which does not contain a row cycle of length $m$. Combining these facts with \lref{l:dpcyc} and \lref{l:dpsym} proves the claim.	
	\end{proof}
	
	\section{Conclusion}\label{s:conc}
	
	In \sref{ss:n2} we characterised exactly when a quadratic Latin square is $N_2$. Note that $N_2$ quadratic Latin squares of order $q$ exist for all odd prime powers $q$, because the squares $\mathcal{L}[a, a]$ are $N_2$ for any $a \in \F_q \setminus \{0, 1\}$. We also found that there are $7q^2/32 + O(q^{3/2})$ quadratic $N_2$ Latin squares of order $q$. Dr\'{a}pal and Wanless~\cite{isoquas} showed that the quadratic Latin squares $\mathcal{L}[a, b]$ and $\mathcal{L}[c, d]$ of order $q$ are isomorphic if and only if $\{c, d\} = \{\theta(a), \theta(b)\}$ for an automorphism $\theta$ of $\mathbb{F}_q$. It follows that the number of isomorphism classes of $N_2$ quadratic Latin squares of order $q$ is at least $\Theta(q^2/\log(q))$. \lref{l:dpcyc} implies that the direct product of $N_2$ Latin squares is also $N_2$. This fact was known by McLeish~\cite{MR396298}. This means that we can construct $N_2$ Latin squares of order $n$ for all odd $n$ by using the direct product of quadratic Latin squares. In fact, we can construct many $N_2$ Latin squares for all odd orders. However, this construction only gives a small number of $N_2$ Latin squares, in comparison to the total number of $N_2$ Latin squares. Kwan, Sah, Sawhney and Simkin~\cite{latinsub} have used a probabilistic argument to show that there are at least $(e^{-9/4}n-o(n))^{n^2}$ Latin squares of order $n$ which are devoid of intercalates.
	
	\tref{t:quadneg} tells us that quadratic Latin squares of order $q = p^d$ will not be useful for constructing perfect $1$-factorisations unless $d$ is small. It also tells us that unless $d$ is small, the only quadratic Latin squares which could be useful for constructing $1$-factorisations which contain only short cycles are the squares $\mathcal{L}[a, b]$ with $\{a, b\} \subseteq \F_p \cap \N_q$. Combining \lref{l:quadrowperms} with the fact that every row permutation of such a square contains a cycle of length at most $p$ makes it tempting to investigate these squares when searching for $1$-factorisations which contain only short cycles. However computational evidence seems to suggest that such Latin squares always contain some large row cycles.
	
	\section*{Acknowledgements}
	
	I would like to thank Ian Wanless for many useful discussions and for helpful comments on an early draft of this paper. The author was supported by an Australian Government Research Training Program Scholarship.
	
	\printbibliography

@article {MR1685535,
	AUTHOR = {McKay, B. D. and Wanless, I. M.},
	TITLE = {Most {L}atin squares have many subsquares},
	JOURNAL = {J. Combin. Theory Ser. A},
	FJOURNAL = {Journal of Combinatorial Theory. Series A},
	VOLUME = {86},
	YEAR = {1999},
	NUMBER = {2},
	PAGES = {322--347},
	ISSN = {0097-3165},
	MRCLASS = {05B15},
	MRNUMBER = {1685535},
	MRREVIEWER = {J. D\'{e}nes},
	DOI = {10.1006/jcta.1998.2947},
	URL = {https://doi.org/10.1006/jcta.1998.2947},
}

@article {MR427101,
	AUTHOR = {Kotzig, A. and Turgeon, J.},
	TITLE = {On certain constructions for {L}atin squares with no {L}atin
	subsquares of order two},
	JOURNAL = {Discrete Math.},
	FJOURNAL = {Discrete Mathematics},
	VOLUME = {16},
	YEAR = {1976},
	NUMBER = {3},
	PAGES = {263--270},
	ISSN = {0012-365X},
	MRCLASS = {05B15},
	MRNUMBER = {427101},
	MRREVIEWER = {Giuseppe Pellegrino},
	DOI = {10.1016/0012-365X(76)90103-5},
	URL = {https://doi.org/10.1016/0012-365X(76)90103-5},
}

@article {MR401504,
	AUTHOR = {Kotzig, A. and Lindner, C. C. and Rosa, A.},
	TITLE = {Latin squares with no subsquares of order two and disjoint
	{S}teiner triple systems},
	JOURNAL = {Utilitas Math.},
	FJOURNAL = {Utilitas Mathematica. An International Journal of Discrete and
	Combinatorial Mathematics, and Statistical Design},
	VOLUME = {7},
	YEAR = {1975},
	PAGES = {287--294},
	ISSN = {0315-3681},
	MRCLASS = {05B15},
	MRNUMBER = {401504},
	MRREVIEWER = {Jean-Claude Bermond},
}

@article {MR396298,
	AUTHOR = {McLeish, M.},
	TITLE = {On the existence of {L}atin squares with no subsquares of
	order two},
	JOURNAL = {Utilitas Math.},
	FJOURNAL = {Utilitas Mathematica. An International Journal of Discrete and
	Combinatorial Mathematics, and Statistical Design},
	VOLUME = {8},
	YEAR = {1975},
	PAGES = {41--53},
	ISSN = {0315-3681},
	MRCLASS = {05B15},
	MRNUMBER = {396298},
	MRREVIEWER = {Charles C. Lindner},
}

@inproceedings {MR0173249,
	AUTHOR = {Kotzig, A.},
	TITLE = {Hamilton graphs and {H}amilton circuits},
	BOOKTITLE = {Theory of {G}raphs and its {A}pplications ({P}roc. {S}ympos.
	{S}molenice, 1963)},
	PAGES = {63--82},
	PUBLISHER = {Publ. House Czechoslovak Acad. Sci., Prague},
	YEAR = {1964},
	MRCLASS = {55.10 (05.40)},
	MRNUMBER = {0173249},
	MRREVIEWER = {J. W. Moon},
}

@article {MR2216455,
	AUTHOR = {Bryant, D. and Maenhaut, B. M. and Wanless, I. M.},
	TITLE = {New families of atomic {L}atin squares and perfect
	1-factorisations},
	JOURNAL = {J. Combin. Theory Ser. A},
	FJOURNAL = {Journal of Combinatorial Theory. Series A},
	VOLUME = {113},
	YEAR = {2006},
	% NUMBER = {4},
	PAGES = {608--624},
	ISSN = {0097-3165},
	MRCLASS = {05C70 (05B15)},
	MRNUMBER = {2216455},
	MRREVIEWER = {Peter James Dukes},
	DOI = {10.1016/j.jcta.2005.05.003},
	URL = {https://doi.org/10.1016/j.jcta.2005.05.003},
}

@article {MR4070707,
	AUTHOR = {Gill, M. J. and Wanless, I. M.},
	TITLE = {Perfect 1-factorisations of {$K_{16}$}},
	JOURNAL = {Bull. Aust. Math. Soc.},
	FJOURNAL = {Bulletin of the Australian Mathematical Society},
	VOLUME = {101},
	YEAR = {2020},
	% NUMBER = {2},
	PAGES = {177--185},
	ISSN = {0004-9727},
	MRCLASS = {05C70 (05B15)},
	MRNUMBER = {4070707},
	MRREVIEWER = {Emine \c{S}ule Yaz\i c\i },
	DOI = {10.1017/s0004972719000856},
	URL = {https://doi.org/10.1017/s0004972719000856},
}

@article {MR2130738,
	AUTHOR = {Wanless, I. M. and Ihrig, E. C.},
	TITLE = {Symmetries that {L}atin squares inherit from 1-factorizations},
	JOURNAL = {J. Combin. Des.},
	FJOURNAL = {Journal of Combinatorial Designs},
	VOLUME = {13},
	YEAR = {2005},
	NUMBER = {3},
	PAGES = {157--172},
	ISSN = {1063-8539},
	MRCLASS = {05B15},
	MRNUMBER = {2130738},
	MRREVIEWER = {Ian Anderson},
	DOI = {10.1002/jcd.20045},
	URL = {https://doi.org/10.1002/jcd.20045},
}

@article {MR2134185,
	AUTHOR = {Wanless, I. M.},
	TITLE = {Atomic {L}atin squares based on cyclotomic orthomorphisms},
	JOURNAL = {Electron. J. Combin.},
	FJOURNAL = {Electronic Journal of Combinatorics},
	VOLUME = {12},
	YEAR = {2005},
	PAGES = {Research Paper 22, 23pp},
	MRCLASS = {05B15},
	MRNUMBER = {2134185},
	MRREVIEWER = {Donald A. Preece},
	URL =
	{http://www.combinatorics.org/Volume_12/Abstracts/v12i1r22.html},
}

@book{MR1429394,
	AUTHOR = {Lidl, R. and Niederreiter, H.},
	TITLE = {Finite fields},
	SERIES = {Encyclopedia of Mathematics and its Applications},
	VOLUME = {20},
	EDITION = {Second Edition},
	PUBLISHER = {Cambridge University Press, Cambridge},
	YEAR = {1997},
	PAGES = {xiv+755},
	ISBN = {0-521-39231-4},
	MRCLASS = {11Txx},
	MRNUMBER = {1429394},
}

@article {MR1670298,
	AUTHOR = {Wanless, I. M.},
	TITLE = {Perfect factorisations of bipartite graphs and {L}atin squares
	without proper subrectangles},
	JOURNAL = {Electron. J. Combin.},
	FJOURNAL = {Electronic Journal of Combinatorics},
	VOLUME = {6},
	YEAR = {1999},
	PAGES = {Research Paper 9, 16pp},
	MRCLASS = {05C70 (05B15)},
	MRNUMBER = {1670298},
	URL = {http://www.combinatorics.org/Volume_6/Abstracts/v6i1r9.html},
}

@article {MR4354936,
	AUTHOR = {V\'{a}zquez-\'{A}vila, A.},
	TITLE = {On {$(1,C_4)$} one-factorization and two orthogonal
	{$(2,C_4)$} one-factorizations of complete graphs},
	JOURNAL = {Graphs Combin.},
	FJOURNAL = {Graphs and Combinatorics},
	VOLUME = {38},
	YEAR = {2022},
	NUMBER = {1},
	PAGES = {Paper No. 18, 11},
	ISSN = {0911-0119},
	MRCLASS = {05C70},
	MRNUMBER = {4354936},
	MRREVIEWER = {Emine \c{S}ule Yaz\i c\i },
	DOI = {10.1007/s00373-021-02425-3},
	URL = {https://doi.org/10.1007/s00373-021-02425-3},
}

@article {MR422088,
	AUTHOR = {Cameron, P. J.},
	TITLE = {Minimal edge-colourings of complete graphs},
	JOURNAL = {J. London Math. Soc. (2)},
	FJOURNAL = {Journal of the London Mathematical Society. Second Series},
	VOLUME = {11},
	YEAR = {1975},
	NUMBER = {3},
	PAGES = {337--346},
	ISSN = {0024-6107},
	MRCLASS = {05C99},
	MRNUMBER = {422088},
	MRREVIEWER = {Norman Biggs},
	DOI = {10.1112/jlms/s2-11.3.337},
	URL = {https://doi.org/10.1112/jlms/s2-11.3.337},
}

@article {MR3537912,
	AUTHOR = {Benson, N. M. A. and Dukes, P. J.},
	TITLE = {Pairwise balanced designs covered by bounded flats},
	JOURNAL = {Ann. Comb.},
	FJOURNAL = {Annals of Combinatorics},
	VOLUME = {20},
	YEAR = {2016},
	NUMBER = {3},
	PAGES = {419--431},
	ISSN = {0218-0006},
	MRCLASS = {51E20 (05B05 05B15)},
	MRNUMBER = {3537912},
	MRREVIEWER = {Anamari Naki\'{c}},
	DOI = {10.1007/s00026-016-0316-y},
	URL = {https://doi.org/10.1007/s00026-016-0316-y},
}

@article {MR2071339,
	AUTHOR = {Nin\v{c}\'{a}k, J. and Owens, P. J.},
	TITLE = {On a problem of {R}. {H}\"{a}ggkvist concerning edge-colouring of
	bipartite graphs},
	JOURNAL = {Combinatorica},
	FJOURNAL = {Combinatorica. An International Journal on Combinatorics and
	the Theory of Computing},
	VOLUME = {24},
	YEAR = {2004},
	NUMBER = {2},
	PAGES = {325--329},
	ISSN = {0209-9683},
	MRCLASS = {05C15 (05B15)},
	MRNUMBER = {2071339},
	DOI = {10.1007/s00493-004-0020-0},
	URL = {https://doi.org/10.1007/s00493-004-0020-0},
}

@article {MR2433008,
	AUTHOR = {Dukes, P. and Ling, A. C. H.},
	TITLE = {Edge-colourings of {$K_{n,n}$} with no long two-coloured
	cycles},
	JOURNAL = {Combinatorica},
	FJOURNAL = {Combinatorica. An International Journal on Combinatorics and
	the Theory of Computing},
	VOLUME = {28},
	YEAR = {2008},
	NUMBER = {3},
	PAGES = {373--378},
	ISSN = {0209-9683},
	MRCLASS = {05C15 (05C38)},
	MRNUMBER = {2433008},
	MRREVIEWER = {Robin J. Wilson},
	DOI = {10.1007/s00493-008-2272-6},
	URL = {https://doi.org/10.1007/s00493-008-2272-6},
}

@article {MR2475030,
	AUTHOR = {Dukes, P. and Ling, A. C. H.},
	TITLE = {Linear spaces with small generated subspaces},
	JOURNAL = {J. Combin. Theory Ser. A},
	FJOURNAL = {Journal of Combinatorial Theory. Series A},
	VOLUME = {116},
	YEAR = {2009},
	NUMBER = {2},
	PAGES = {485--493},
	ISSN = {0097-3165},
	MRCLASS = {05B30},
	MRNUMBER = {2475030},
	MRREVIEWER = {Jianmin Wang},
	DOI = {10.1016/j.jcta.2008.06.010},
	URL = {https://doi.org/10.1016/j.jcta.2008.06.010},
}

@article {MR3954017,
	AUTHOR = {Rosa, A.},
	TITLE = {Perfect 1-factorizations},
	JOURNAL = {Math. Slovaca},
	FJOURNAL = {Mathematica Slovaca},
	VOLUME = {69},
	YEAR = {2019},
	NUMBER = {3},
	PAGES = {479--496},
	ISSN = {0139-9918},
	MRCLASS = {05C70 (05B30)},
	MRNUMBER = {3954017},
	MRREVIEWER = {Daniel Horsley},
	DOI = {10.1515/ms-2017-0241},
	URL = {https://doi.org/10.1515/ms-2017-0241},
}

@article {MR1899629,
	AUTHOR = {Bryant, D. and Maenhaut, B. M. and Wanless, I. M.},
	TITLE = {A family of perfect factorisations of complete bipartite
	graphs},
	JOURNAL = {J. Combin. Theory Ser. A},
	FJOURNAL = {Journal of Combinatorial Theory. Series A},
	VOLUME = {98},
	YEAR = {2002},
	NUMBER = {2},
	PAGES = {328--342},
	ISSN = {0097-3165},
	MRCLASS = {05C70 (05B15 05C45)},
	MRNUMBER = {1899629},
	MRREVIEWER = {Martin Funk},
	DOI = {10.1006/jcta.2001.3240},
	URL = {https://doi.org/10.1006/jcta.2001.3240},
}

@article {MR2597247,
	AUTHOR = {Bohossian, V. and Bruck, J.},
	TITLE = {Shortening array codes and the perfect 1-factorization
	conjecture},
	JOURNAL = {IEEE Trans. Inform. Theory},
	FJOURNAL = {Institute of Electrical and Electronics Engineers.
	Transactions on Information Theory},
	VOLUME = {55},
	YEAR = {2009},
	NUMBER = {2},
	PAGES = {507--513},
	ISSN = {0018-9448},
	MRCLASS = {94B25 (05C70)},
	MRNUMBER = {2597247},
	DOI = {10.1109/TIT.2008.2009850},
	URL = {https://doi.org/10.1109/TIT.2008.2009850},
}

@article {MR2311238,
	AUTHOR = {Nanda, S. and Deo, N.},
	TITLE = {Methods for placing data and parity to tolerate two disk
	failures in disk arrays using complete bipartite graphs},
	BOOKTITLE = {Proceedings of the {T}hirty-{S}eventh {S}outheastern
	{I}nternational {C}onference on {C}ombinatorics, {G}raph
	{T}heory and {C}omputing},
	JOURNAL = {Congr. Numer.},
	FJOURNAL = {Congressus Numerantium. A Conference Journal on Numerical
	Themes},
	VOLUME = {179},
	YEAR = {2006},
	PAGES = {167--179},
	ISSN = {0384-9864},
	MRCLASS = {68R10 (05C90 68M15)},
	MRNUMBER = {2311238},
}

@article {MR623318,
	AUTHOR = {Horton, J. D.},
	TITLE = {Room designs and one-factorizations},
	JOURNAL = {Aequationes Math.},
	FJOURNAL = {Aequationes Mathematicae},
	VOLUME = {22},
	YEAR = {1981},
	NUMBER = {1},
	PAGES = {56--63},
	ISSN = {0001-9054},
	MRCLASS = {05B30 (05C70)},
	MRNUMBER = {623318},
	MRREVIEWER = {David A. Drake},
	DOI = {10.1007/BF02190160},
	URL = {https://doi.org/10.1007/BF02190160},
}

@article {MR27006,
	AUTHOR = {Weil, A.},
	TITLE = {On some exponential sums},
	JOURNAL = {Proc. Nat. Acad. Sci. U.S.A.},
	FJOURNAL = {Proceedings of the National Academy of Sciences of the United
	States of America},
	VOLUME = {34},
	YEAR = {1948},
	PAGES = {204--207},
	ISSN = {0027-8424},
	MRCLASS = {10.0X},
	MRNUMBER = {27006},
	MRREVIEWER = {H. D. Kloosterman},
	DOI = {10.1073/pnas.34.5.204},
	URL = {https://doi.org/10.1073/pnas.34.5.204},
}

@article{latinsub,
	journal = {\\ arXiv:2202.05088},
	
	author = {Kwan, M. and Sah, A. and Sawhney, M. and Simkin, M.},
	
	title = {Substructures in Latin squares},
	
	year = {2022},
}

@book {MR1222645,
	AUTHOR = {Evans, A. B.},
	TITLE = {Orthomorphism graphs of groups},
	SERIES = {Lecture Notes in Mathematics},
	VOLUME = {1535},
	PUBLISHER = {Springer-Verlag, Berlin},
	YEAR = {1992},
	PAGES = {viii+114},
	ISBN = {3-540-56351-2},
	MRCLASS = {05-02 (05B15 20B99 51E14 51E15)},
	MRNUMBER = {1222645},
	MRREVIEWER = {Antonio Pasini},
	DOI = {10.1007/BFb0092363},
	URL = {https://doi.org/10.1007/BFb0092363},
}

@book {MR3837138,
AUTHOR = {Evans, A. B.},
TITLE = {Orthogonal {L}atin squares based on groups},
SERIES = {Developments in Mathematics},
VOLUME = {57},
PUBLISHER = {Springer, Cham Switzerland},
YEAR = {2018},
PAGES = {xv+537},
ISBN = {978-3-319-94429-6; 978-3-319-94430-2},
MRCLASS = {05-02 (05B15 20D60 20N05 51E15)},
MRNUMBER = {3837138},
MRREVIEWER = {Petr Vojt\v{e}chovsk\'{y}},
DOI = {10.1007/978-3-319-94430-2},
URL = {https://doi.org/10.1007/978-3-319-94430-2},
}

@article {MR4275825,
	AUTHOR = {Dr\'{a}pal, A. and Wanless, I. M.},
	TITLE = {Maximally nonassociative quasigroups via quadratic
	orthomorphisms},
	JOURNAL = {Algebr. Comb.},
	FJOURNAL = {Algebraic Combinatorics},
	VOLUME = {4},
	YEAR = {2021},
	NUMBER = {3},
	PAGES = {501--515},
	MRCLASS = {20N05},
	MRNUMBER = {4275825},
	MRREVIEWER = {A. Nourou Issa},
	DOI = {10.5802/alco.165},
	URL = {https://doi.org/10.5802/alco.165},
}

@article{hon,
	journal = {arXiv.2211.13826},
	
	author = {Allsop, J. and Wanless, I. M.},
	
	title = {Row-Hamiltonian Latin squares and Falconer varieties},
	
	year = {2022},
}

@article{isoquas,
	journal = {arXiv.2211.09472},
	
	author = {Drápal, A. and Wanless, I. M.},
	
	title = {Isomorphisms of quadratic quasigroups},
	
	year = {2022},
}

@article {numquad,
	AUTHOR = {Dr\'{a}pal, A. and Wanless, I. M.},
	TITLE = {On the number of quadratic orthomorphisms that produce maximally nonassociative quasigroups},
	JOURNAL = {J. Aust. Math. Soc.},
	NOTE = {doi:10.1017/S1446788722000386},
}

@book {MR0351850,
	AUTHOR = {D\'{e}nes, J. and Keedwell, A. D.},
	TITLE = {Latin squares and their applications},
	PUBLISHER = {Academic Press, New York-London},
	YEAR = {1974},
	PAGES = {547},
	MRCLASS = {05B15 (05B05 20N05 94A10)},
	MRNUMBER = {0351850},
	MRREVIEWER = {H. B. Mann},
}

@article {MR1820691,
	AUTHOR = {Wanless, I. M.},
	TITLE = {On {M}c{L}eish's construction for {L}atin squares without
	intercalates},
	JOURNAL = {Ars Combin.},
	FJOURNAL = {Ars Combinatoria},
	VOLUME = {58},
	YEAR = {2001},
	PAGES = {313--317},
	ISSN = {0381-7032},
	MRCLASS = {05B15},
	MRNUMBER = {1820691},
}

@article {MR4488316,
	AUTHOR = {Kwan, M. and Sah, A. and Sawhney, M.},
	TITLE = {Large deviations in random {L}atin squares},
	JOURNAL = {Bull. Lond. Math. Soc.},
	FJOURNAL = {Bulletin of the London Mathematical Society},
	VOLUME = {54},
	YEAR = {2022},
	NUMBER = {4},
	PAGES = {1420--1438},
	ISSN = {0024-6093},
	MRCLASS = {60F10 (05B15 05C80)},
	MRNUMBER = {4488316},
	DOI = {10.1112/blms.12638},
	URL = {https://doi.org/10.1112/blms.12638},
}

@article {MR2024246,
	AUTHOR = {Maenhaut, B. M. and Wanless, I. M.},
	TITLE = {Atomic {L}atin squares of order eleven},
	JOURNAL = {J. Combin. Des.},
	FJOURNAL = {Journal of Combinatorial Designs},
	VOLUME = {12},
	YEAR = {2004},
	NUMBER = {1},
	PAGES = {12--34},
	ISSN = {1063-8539},
	MRCLASS = {05B15},
	MRNUMBER = {2024246},
	MRREVIEWER = {Steven T. Dougherty},
	DOI = {10.1002/jcd.10064},
	URL = {https://doi.org/10.1002/jcd.10064},
}

@book {MR2469212,
	AUTHOR = {Lindner, C. C. and Rodger, C. A.},
	TITLE = {Design theory},
	SERIES = {Discrete Mathematics and its Applications (Boca Raton)},
	EDITION = {Second},
	PUBLISHER = {CRC Press, Boca Raton, FL},
	YEAR = {2009},
	PAGES = {xiv+264},
	ISBN = {978-1-4200-8296-8},
	MRCLASS = {05-01 (05B05)},
	MRNUMBER = {2469212},
	MRREVIEWER = {Peter James Dukes},
}

@inproceedings {hagg,
	AUTHOR = {H\"{a}ggkvist, R.},
	TITLE = {Problems},
	BOOKTITLE = {Combinatorics ({P}roc. {F}ifth {H}ungarian {C}olloq.,
	{K}eszthely, 1976), {V}ol. {II}},
	SERIES = {Colloq. Math. Soc. J\'{a}nos Bolyai},
	VOLUME = {18},
	PAGES = {1203--1204},
	PUBLISHER = {North-Holland, Amsterdam-New York},
	YEAR = {1978},
}

@article {MR1370131,
	AUTHOR = {Colbourn, C. and Haddad, L. and Linek, V.},
	TITLE = {Equitable embeddings of {S}teiner triple systems},
	JOURNAL = {J. Combin. Theory Ser. A},
	FJOURNAL = {Journal of Combinatorial Theory. Series A},
	VOLUME = {73},
	YEAR = {1996},
	NUMBER = {2},
	PAGES = {229--247},
	ISSN = {0097-3165},
	MRCLASS = {05B07},
	MRNUMBER = {1370131},
	MRREVIEWER = {K. T. Phelps},
	URL =
	{http://www.sciencedirect.com/science?_ob=GatewayURL&_origin=MR&_method=citationSearch&_piikey=S0097316596900205&_version=1&md5=d560f76e3b08048e6935fd0590145ef3},
}

@incollection {MR1209190,
	AUTHOR = {Hartman, A. and Heinrich, K.},
	TITLE = {Pairwise balanced designs with holes},
	BOOKTITLE = {Graphs, matrices, and designs},
	SERIES = {Lecture Notes in Pure and Appl. Math.},
	VOLUME = {139},
	PAGES = {171--204},
	PUBLISHER = {Dekker, New York},
	YEAR = {1993},
	MRCLASS = {05B05},
	MRNUMBER = {1209190},
	MRREVIEWER = {R. N. Mohan},
}

@article {MR582276,
	AUTHOR = {Laufer, P. J.},
	TITLE = {On strongly {H}amiltonian complete bipartite graphs},
	JOURNAL = {Ars Combin.},
	FJOURNAL = {Ars Combinatoria},
	VOLUME = {9},
	YEAR = {1980},
	PAGES = {43--46},
	ISSN = {0381-7032},
	MRCLASS = {05C45},
	MRNUMBER = {582276},
	MRREVIEWER = {C. Hoede},
}

@article {MR2206402,
	AUTHOR = {Dinitz, J. H. and Dukes, P.},
	TITLE = {On the structure of uniform one-factorizations from starters
	in finite fields},
	JOURNAL = {Finite Fields Appl.},
	FJOURNAL = {Finite Fields and their Applications},
	VOLUME = {12},
	YEAR = {2006},
	NUMBER = {2},
	PAGES = {283--300},
	ISSN = {1071-5797},
	MRCLASS = {05C70 (05B07)},
	MRNUMBER = {2206402},
	MRREVIEWER = {Martin Funk},
	DOI = {10.1016/j.ffa.2005.05.008},
	URL = {https://doi.org/10.1016/j.ffa.2005.05.008},
}
	
\end{document}